\theoremstyle{plain}
\newtheorem{theorem}{Theorem}
\numberwithin{theorem}{section}
\newtheorem{conjecture}[theorem]{Conjecture}
\newtheorem{corollary}[theorem]{Corollary}
\newtheorem{definition}[theorem]{Definition}
\newtheorem{example}[theorem]{Example}
\newtheorem{lemma}[theorem]{Lemma}
\newtheorem{proposition}[theorem]{Proposition}
\newtheorem*{theoremX}{Theorem}
\newtheorem*{lemmaX}{Lemma}
\newcommand\myover[2]{\genfrac{}{}{0pt}{}{#1}{#2}}
\renewcommand{\H}{\mathrm{H}}
\newcommand{\DG}{{DG}}
\renewcommand{\mod}{\mathrm{mod}}
\newcommand{\comod}{\mathrm{comod}}
\newcommand{\C}{\mathrm{C}}
\newcommand{\Aut}{\mathrm{Aut}}
\newcommand{\Equiv}{\mathrm{Equiv}}
\newcommand{\Rep}{\mathrm{Rep}}
\newcommand{\Bigal}{\mathrm{Bigal}}
\newcommand{\Gal}{\mathrm{Gal}}
\newcommand{\End}{\mathrm{End}}
\newcommand{\Out}{\mathrm{Out}}
\newcommand{\Inn}{\mathrm{Inn}}
\newcommand{\Reg}{\mathrm{Reg}}
\newcommand{\Hom}{\mathrm{Hom}}
\newcommand{\id}{\mathrm{id}}
\newcommand{\im}{\mathrm{im}}
\renewcommand{\SS}{\mathbb{S}}
\newcommand{\F}{\mathbb{F}}
\newcommand{\Z}{\mathrm{Z}}
\renewcommand{\P}{\mathrm{P}}
\newcommand{\GL}{\mathrm{GL}}
\renewcommand{\d}{\mathrm{d}}
\newcommand{\ZZ}{\mathbb{Z}}
\newcommand{\nat}{\mathbb{N}}
\newcommand{\md}{\text{-}}
\newcommand{\hamburger}[4] 
{
  \thispagestyle{empty}
  \vspace*{-2cm}
  \begin{flushright}
    ZMP-HH #2 \\
    Hamburger Beitr\"age zur Mathematik Nr. #3 \\
    #4 \\
  \end{flushright}
  \vspace{0.5cm}
  \begin{center}
    \Large \bf
    #1
  \end{center}
  \vspace{0.5cm}
  \begin{center}        
    Simon Lentner, Jan Priel \\
    Fachbereich Mathematik, Universit\"at Hamburg \\
    Bereich Algebra und Zahlentheorie \\
    Bundesstra\ss e 55, D-20146 Hamburg \\
  \end{center}
  \vspace{0.5cm}
}
\begin{document}

\hamburger{On monoidal autoequivalences of the category of Yetter-Drinfeld modules over a group: The lazy case}{15-26}{572}{Nov. 2015}
\thispagestyle{empty}
\enlargethispage{1cm}

\begin{abstract}
An interesting open question is to determine the group of monoidal autoequivalences of the category of Yetter-Drinfeld modules over a finite group $G$, or equivalently the group of Bigalois objects over the dual of the Drinfeld double $\DG$. In particular one would hope to decompose this group into terms related to monoidal autoequivalences for the group algebra, the dual group algebra and interaction terms.

We report on our progress in this question: We first prove a decomposition of the group of Hopf algebra automorphisms of the Drinfeld double into three subgroups, which reduces in the case $G=\ZZ_p^n$ to a  Bruhat decomposition of $\GL_{2n}(\ZZ_p)$. Secondly, we propose a K\"unneth-like formula for the Hopf algebra cohomology of $\DG^*$ into three terms and prove partial results in the case of lazy cohomology. 
We use these results for the calculation of the Brauer-Picard group in the lazy case in \cite{LP15}. 
\end{abstract}

\makeatletter
\@setabstract
\makeatother

\tableofcontents
\newpage

\section{Introduction}

Given any monoidal category such as the representation category $H\md\mod$ of a finite dimensional Hopf algebra $H$, it is an interesting  and nontrivial question to calculate the group of monoidal autoequivalences of $H\md\mod$. In general, these are in bijection with so-called Bigalois objects \cite{Schau} of the dual Hopf algebra $H^*$, which are $H$-$H$-bimodule algebras satisfying a nondegeneracy condition. The autoequivalence is given by tensoring with the bimodule, while the algebra structure determines a monoidal structure on this functor.\\

Now given a Hopf algebra $H$ one can construct another Hopf algebra $DH$ called the Drinfeld double of $H$; modules over $DH$ are the Yetter-Drinfeld modules over $H$. We would like to calculate the monoidal autoequivalences of $DH\md\mod$ in terms of the monoidal autoequivalences of $H\md\mod$, $H^*\md\mod$ and interaction terms. In this article we address the case $H=kG$ the group algebra of a finite group $G$ and restrict ourselves to so-called lazy $2$-cocycles. We do not provide a complete decomposition, but we achieve partial results, that are however sufficient for our calculation of the corresponding subgroup of the Brauer-Picard group of $kG\md\mod$ in \cite{LP15}.\\  

In Section 2 we give some preliminaries. First we give a brief introduction to Hopf Bigalois objects and explain the notion of lazy Bigalois objects. These are given by pairs $(\phi,\sigma)$ where $\phi \in \Aut_{Hopf}(H\md\mod)$ describes the action of $A \otimes_H \bullet$ on the set of objects in $H\md\mod$ and where $\sigma \in \Z^2_L(H^*)$ is a so called lazy $2$-cocycle that describes the monoidal structure on the functor $A \otimes_H \bullet$. A similar description holds for arbitrary monoidal autoequivalences, but the Hopf $2$-cocycles do not form a group anymore.\\

Section \ref{seckupg} gives an explicit formula for lazy $2$-cocycles of $k^G$ using the classification of Galois algebras in \cite{Mov93} and \cite{Dav01}.\\

Section \ref{sec_cell} is devoted to the Hopf algebra automorphisms $\Aut_{Hopf}(\DG)$ and contains our main result. We give a (double) coset decomposition of $\Aut_{Hopf}(\DG)$ with respect to certain natural subgroups. For this we use the classification of bicrossed products of Hopf algebras and their homomorphisms \cite{ABM} in the special case $\DG = k^G \rtimes kG$ \cite{Keil}. In this approach, Hopf automorphisms of $\DG$ are described in terms of $2\times 2$-matrices with entries determined by certain group homomorphisms (see Proposition \ref{propone}). In the case that $G$ has no abelian direct factors, Keilberg determined completely $\Aut_{Hopf}(\DG)$ in terms of an exact factorization of $\Aut_{Hopf}(\DG)$ into natural subgroups, see Proposition \ref{auto} - \ref{autc} and Theorem \ref{thm_keilberg}. These subgroups are the upper triangular matrices $E$, lower triangular matrices $B$, two groups $V \cong \Aut(G)$ and $V_c \cong \Aut_c(G)$ that are certain diagonal matrices. Schauenburg and Keilberg have determined $\Aut_{Hopf}(\DG)$ in \cite{KS14} for general $G$ with a different approach, that we did not find suitable for analyzing the subgroup of \emph{braided} monoidal autoequivalences, which is our main motivation for the decomposition.

Our observation for the structure of $\Aut_{Hopf}(\DG)$ is that when $G$ contains a direct abelian factor $C$ we need a new class of Hopf automorphisms $R$ of $\DG$ called \emph{reflections} of $C$. These automorphisms exchange a direct factor $C$ with its dual $\hat{C}$. This leads us to the main result of Section \ref{sec_cell}: \\

\begin{theoremX}[\ref{thm_cell}]~\\
With the subgroups $B,E,V,V_c$ and subsets $R,R_t$ of $\Aut_{Hopf}(\DG)$ defined above the following holds:
\begin{itemize}
\item[(ii)] For every $\phi \in \Aut_{Hopf}(\DG)$ there is a twisted reflection $r\in R_t$ such that $\phi$ is an element in the double coset  
\begin{equation*} \begin{split}
&[(V_c \rtimes V) \ltimes B] \cdot r \cdot [(V_c \rtimes V) \ltimes E]
\end{split}
\end{equation*}
\item[(v)] For every $\phi \in \Aut_{Hopf}(\DG)$ there is a reflection $r = r_{(C,H,\delta)} \in R$ such that $\phi$ is an element in 
\begin{equation*} 
\begin{split}
& ((V_c \rtimes V) \ltimes B)E \cdot r 
\end{split}
\end{equation*}
\end{itemize}
\end{theoremX}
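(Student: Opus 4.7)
The plan is to prove both statements via a Bruhat-style normal form argument on the matrix presentation of Proposition~\ref{propone}. Any $\phi \in \Aut_{Hopf}(\DG)$ can be written as a $2 \times 2$ matrix whose entries are group maps fitting into the bicrossed product structure $\DG = k^G \rtimes kG$. In this matrix language, $B$ and $E$ are ``lower triangular'' and ``upper triangular'' subgroups, $V \cong \Aut(G)$ and $V_c \cong \Aut_c(G)$ are ``diagonal'' subgroups, and the elements of $R$ and $R_t$ play the role of Weyl elements that exchange a direct abelian factor $C$ of $G$ with its Pontryagin dual $\hat{C}$. The whole argument parallels the decomposition $\GL_n = BWB$.

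\medskip

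First I would dispose of the case where $G$ has no abelian direct factor: Theorem~\ref{thm_keilberg} already provides an exact factorization $\Aut_{Hopf}(\DG) = ((V_c \rtimes V) \ltimes B) \cdot E$, so both (ii) and (v) hold with the trivial reflection $r = \id$. This establishes the base case and makes precise the statement that reflections are only needed to account for abelian direct factors.

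\medskip

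For the general case, I would extract from $\phi$ a canonical abelian direct factor $C \le G$, a subgroup $H \le C$ of the appropriate rank, and a cocycle datum $\delta$ determined by the ``off-diagonal'' blocks of the matrix of $\phi$ restricted to $C$. The point is that these off-diagonal blocks carry honest homomorphisms $C \to \hat{C}$ and $\hat{C} \to C$ which obstruct a pure triangular factorization; the reflection $r_{(C,H,\delta)}$ is designed precisely to absorb them. After choosing $r$, I would run a Gaussian-elimination argument on $r^{-1}\phi$ (respectively $\phi r^{-1}$): left-multiplication by suitable elements of $B$ clears the lower-left block, right-multiplication by elements of $E$ clears the upper-right block, and the diagonal corrections are absorbed into $V_c \rtimes V$. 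For statement (ii), the residual asymmetry is absorbed as a ``twist'' on the right, producing a twisted reflection $r \in R_t$ rather than an untwisted one. For statement (v), the same twist can instead be incorporated into the triple $(C,H,\delta)$ parametrizing $r \in R$, at the cost of enlarging the left-hand subset from $(V_c \rtimes V) \ltimes B$ to the product $((V_c \rtimes V) \ltimes B)\, E$.

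\medskip

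The main obstacle I expect is the bookkeeping, not the overall strategy. Multiplying two matrix-form automorphisms of $\DG$ involves non-trivial interaction terms coming from the coaction and cocycle data of $k^G \rtimes kG$, so it is not enough to manipulate the group-homomorphism entries alone: one must also verify, after each Gaussian step, that the side-conditions coupling these entries to the cocycle data remain consistent. A careful case analysis of how $r \in R$ conjugates $E$ into (a subgroup related to) $B$ is needed, along with a verification that the parameters $(C,H,\delta)$ are well-defined by $\phi$ up to the ambiguity allowed by the double-coset structure. Once these compatibility checks are done, (ii) and (v) follow by assembling the pieces.
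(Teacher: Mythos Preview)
Your overall Bruhat-style strategy is correct, and the paper's proof is indeed a Gaussian elimination on the matrix presentation of Proposition~\ref{propone}. However, your plan diverges from the paper's proof in a significant way and contains some errors that would need fixing.

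The paper does \emph{not} determine the reflection $r$ in advance and then eliminate on $r^{-1}\phi$. Instead it decomposes $G = H \times C_1 \times \cdots \times C_n$ with $H$ purely non-abelian and each $C_i$ cyclic of prime-power order (ordered by size), writes $\phi$ as a $(2n+2)\times(2n+2)$ block matrix, and processes the columns one at a time starting from $C_n$. For each column there is a case analysis (is $v_{C_n,C_n}$ invertible? is $b_{C_n,C_n}$? must one first permute cyclic factors via $V_c$?) which dictates which elementary operations from $B,V_c$ on the left and $E,V_c$ on the right clear that column and the matching row. The (twisted) reflection is whatever survives at the end; the abelian factor it reflects is precisely the product of those $C_i$ whose pivot landed in the $b$-block rather than the $v$-block. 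Your proposal to extract $C$ ``canonically'' from $\phi$ before any elimination skips this mechanism, and you would owe a separate argument that the correct $C$ can be read off directly from $\phi$---which is essentially the content of the proof.

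There are also some slips. You write ``a subgroup $H \le C$'', but in the paper $G = H \times C$ with $H$ the purely non-abelian complement of $C$. You have the triangular roles reversed: $B = \left\{\left(\begin{smallmatrix}1&b\\0&1\end{smallmatrix}\right)\right\}$ is upper-triangular and $E = \left\{\left(\begin{smallmatrix}1&0\\a&1\end{smallmatrix}\right)\right\}$ is lower-triangular, so left-multiplication by $B$ can clear entries in the \emph{upper} rows (in particular the upper-right block when $v$ is invertible), not the lower-left block as you say. Finally, your account of how (v) differs from (ii) does not match the actual mechanism: in the paper the extra factor $E$ appearing on the \emph{left} in (v) is exactly what allows one to kill the nilpotent residue $V_m$ sitting below the off-diagonal block, so that an untwisted reflection remains; the twist is not absorbed into the triple $(C,H,\delta)$.
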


\noindent
In the case of an additive group $G=\F_p^n$ (Example \ref{exm_Fp_Aut}) all twisted reflections are proper reflections and we obtain the Bruhat decomposition of $\Aut_{Hopf}(\DG)=\GL_{2n}(\F_p)$, which is a Lie group of type $A_{2n-1}$, relative to the parabolic subsystem $A_{n-1}\times A_{n-1}$. More precisely, the Levi subgroup is $V_c \rtimes V=\GL_n(\F_p) \times \GL_n(\F_p)$ and $E\cong B$ are the solvable part of the parabolic subgroup. Representatives of the reflections are representatives of the $n+1$ double cosets of the Weyl group $\SS_{2n}$ with respect to the parabolic Weyl group $\SS_n\times \SS_n$. In more general cases usually $E\ncong B$.\\

In Section \ref{seclazy} we address the problem of decomposing the Hopf cohomology of the Drinfeld double $\H^2(DG^*)$ and in particular the group of lazy $2$-cocycles  $\H^2_L(DG^*)$. For Hopf algebra tensor products such as $kG\otimes k^G$ and Doi twists of these, the Kac-Schauenburg sequence \cite{Schau02} implies among others
\begin{align*}
\H^2(kG\otimes k^G)&\simeq \H^2(G,k^\times)\times \P(kG,k^G)\times \H^2(k^G)\quad\mbox{as sets}\\
\H^2_L(kG\otimes k^G)& \simeq \H^2(G,k^\times)\times \P_L(kG,k^G)\times \H^2_L(k^G) \quad\mbox{as groups}
\end{align*}
which should be directly compared to the K\"unneth formula for topological spaces 
$$\H^2(X\times Y)\simeq \H^2(X)\times \left(\H^1(X)\otimes \H^1(Y)\right)\times \H^2(Y)
\hspace{1.7cm}$$
However, for the dual Drinfeld double $\DG^*$, which is a Drinfeld twist, therefore has a modified coalgebra structure, such an easy formula might not be true. Nevertheless, we would still hope that one can define suitable subgroups and prove an exact factorization of $\H^2_L(DG^*)$ into these subgroups and that a similar approach decomposes $\H^2(\DG^*)$. The following diagram gives a sketch of the idea:

%

\begin{center}
 \includegraphics[scale=1]{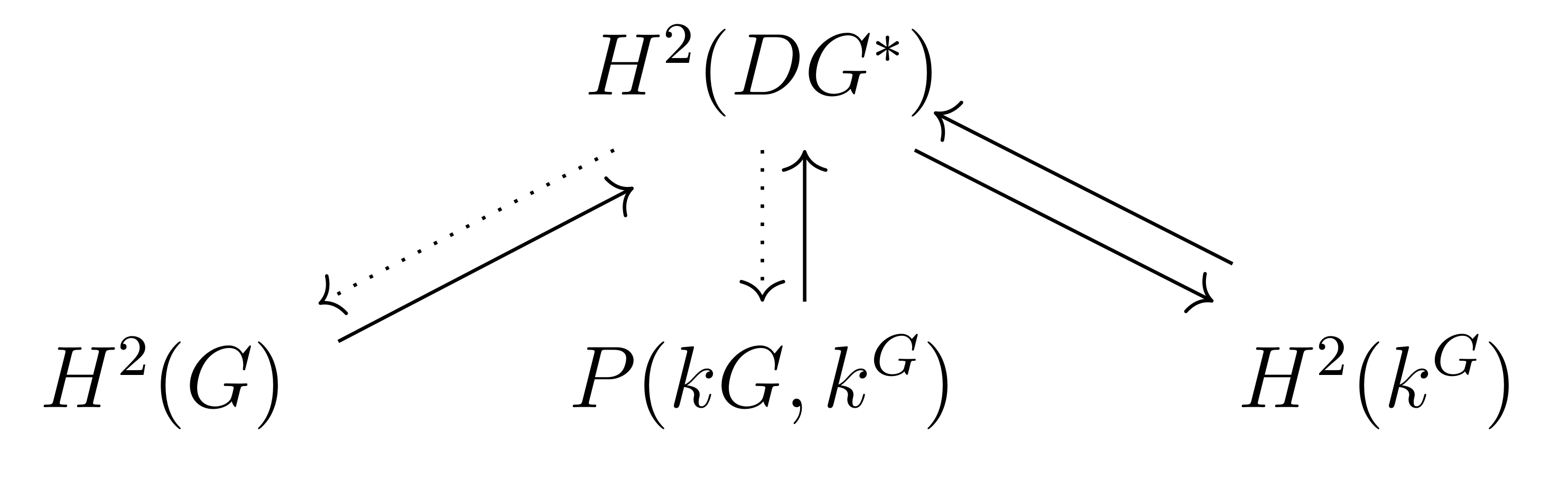}
\end{center}

Most of the arrows follow easily from functoriality. The dashed arrows indicate the direction we were not able to achieve in this article. Going from bottom-to-top gives us explicit subgroups (subsets) of $\H^2(DG^*)$ and we would hope for a decomposition. The maps top-to-bottom that would be necessary to prove this should follow from a restriction of the cocycle, however since $kG$ is not a Hopf subalgebra of $DG^*$ not all restrictions are well-defined. 

For \emph{lazy} $2$-cocycles we are able to solve some of these problems. On the other hand, here we have to deal with the fact that  lazy cohomology is {\it not functorial} which forces us to restrict the bottom-to-top maps on subgroups. One needs to prove that the images of the top-to-bottom maps are contained in these subgroups. Moreover, the lazy cohomology group $\H^2_L(DG^*)$ is not a mere subset of $\H^2(DG^*)$ but also a quotient by fewer coboundaries. In formulating the results, this is a tedious but not serious obstruction. \\

\noindent
We collect all maps established in Section \ref{seclazy} for the lazy cohomology:\\

\begin{center}
 \includegraphics[scale=0.9]{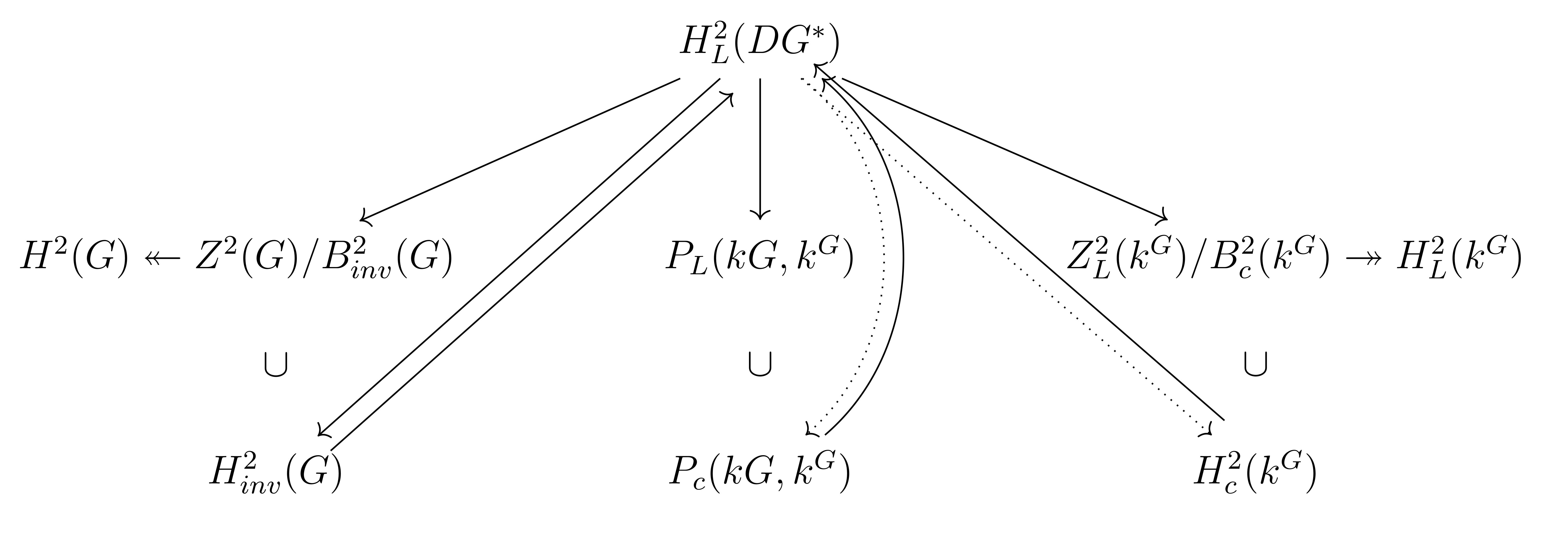}
\end{center}~\\


\noindent

These maps together with the following Lemmas are partial result that are needed to provide the full decomposition and are in addition necessary for our application in \cite{LP15}. 

\begin{lemmaX}\ref{lm_inKernelExact}
A lazy $2$-cocycle in the kernel of all three restriction maps (top-to-bottom) is already cohomologically trivial in $\H^2(\DG^*)$ but not necessarily in $\H^2_L(\DG^*)$. \\
\end{lemmaX}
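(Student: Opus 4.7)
The plan is to deduce the trivialization in $\H^2(\DG^*)$ from the vanishing of the three restrictions by explicit construction of a $1$-cochain, and then to observe that this cochain need not be convolution-central (lazy) on all of $\DG^*$, which is the extra condition required for triviality in $\H^2_L(\DG^*)$.

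First I would unpack the three restriction maps from the diagram. Given a lazy $2$-cocycle $\sigma\in\Z^2_L(\DG^*)$, the three top-to-bottom maps produce a class in $\H^2(G,k^\times)$ coming from the $kG$-part, a class in $\H^2_L(k^G)$ coming from the $k^G$-part, and a pairing class in $\P_L(kG,k^G)$ from the mixed restriction. By hypothesis each of these vanishes, so we may choose lazy $1$-cochains $f_1\in(kG)^*$, $f_3\in(k^G)^*$ and a pairing cochain $f_2:kG\otimes k^G\to k$ whose (lazy) coboundaries recover the corresponding restrictions of $\sigma$.

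Next I would assemble a single convolution-invertible linear form $f$ on $\DG^*$ using the vector-space identification $\DG^*\cong kG\otimes k^G$ by $f(g\otimes\chi)=f_1(g)\,f_2(g,\chi)\,f_3(\chi)$, with whatever normalization constants the explicit formulas of Section~\ref{seckupg} dictate. The central computation is that $\partial f=\sigma$ as an \emph{ordinary} $2$-cochain on $\DG^*$. Using the $2$-cocycle identity for $\sigma$ and the description of $\DG^*$ as a cosmash of $kG$ and $k^G$, this reduces to verifying the equality on the three types of multiplications in $\DG^*$ — pure $kG$-products, pure $k^G$-products, and mixed products — each of which is controlled by exactly one of the three trivialization data. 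This proves $[\sigma]=0$ in $\H^2(\DG^*)$.

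For the second assertion, the point is that laziness of $f_1,f_2,f_3$ is imposed relative to the \emph{undeformed} coproducts on $kG$ and $k^G$, whereas laziness of $f$ on $\DG^*$ demands convolution-centrality against the twisted coproduct that couples both factors. Thus $f$ need not be lazy on $\DG^*$, and passing through $f$ does not show $[\sigma]=0$ in $\H^2_L(\DG^*)$. To complete the "not necessarily" part, I would exhibit a small example — the natural candidate being $G=\F_p^n$ together with the explicit cocycles of Section~\ref{seckupg} — in which $\sigma$ is killed by all three restriction maps while its lazy class is visibly nonzero. The main obstacle is precisely this last step: proving the lazy class is nonzero, and not just exhibiting a non-central $f$, requires understanding $\Z^2_L(\DG^*)$ modulo lazy coboundaries, a quotient which in the present paper is only partially controlled and which is the very reason the dashed arrows in the diagram are dashed.
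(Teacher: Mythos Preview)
Your approach has a genuine gap in the construction of the trivializing $1$-cochain. You propose to assemble $f$ from three pieces $f_1,f_2,f_3$ coming from the trivialization data of the three restrictions. The problem is the middle piece: the pairing restriction $\lambda_\sigma\in\P_L(kG,k^G)$ is not a cohomology class but already lives in the target group, so ``$\lambda_\sigma$ trivial'' simply means $\lambda_\sigma(g,e_x)=\epsilon(e_x)$ and there is no cochain $f_2$ to extract from this hypothesis. If you then set $f_2\equiv 1$, your assembled $f=f_1\cdot f_3$ carries no information about the mixed values $\sigma(g\times 1,\,1\times e_x)$ and $\sigma(1\times e_x,\,g\times 1)$, and these are \emph{not} determined by $\beta_\sigma,\alpha_\sigma,\lambda_\sigma$ (recall $\lambda_\sigma$ is only a particular combination of them). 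So the verification of $\partial f=\sigma$ on mixed products cannot go through.

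The paper's key idea, which you are missing, is that the mixed part of the trivializing cochain must be read off from $\sigma$ itself: one sets
\[
\mu(g\times e_x):=\sigma^{-1}(g\times 1,\,1\times e_x),
\]
and then proves by repeated use of the $2$-cocycle identity (splitting $g\times e_x=(g\times 1)(1\times e_x)$ and similarly for the second argument) together with laziness that $\sigma=\d\bigl(\mu*(\eta\otimes\nu)\bigr)$, where $\beta_\sigma=\d\nu$ and $\alpha_\sigma=\d\eta$. The hypothesis $\lambda_\sigma=\epsilon$ enters only to make certain cross-terms cancel in this expansion, not to supply a cochain. As for the ``not necessarily'' clause: the paper does not produce a counterexample either; it merely observes that $\mu$ (and hence $\mu*(\eta\otimes\nu)$) has no reason to be lazy on $\DG^*$, so the argument only yields triviality in $\H^2(\DG^*)$. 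Your instinct that a genuine counterexample would require more is correct, but that stronger statement is not what the lemma claims.
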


\begin{lemmaX}\ref{lm_symmetric}
A \emph{symmetric} lazy $2$-cocycle on $\DG^*$ is cohomologically equivalent in $\H^2(\DG^*)$ to a lazy $2$-cocycle in the image of $\Z^2_{inv}(G,k^\times) \to \Z^2_L(\DG^*)$. \\
\end{lemmaX}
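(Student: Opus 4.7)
The plan is to use the bicrossed product structure $\DG^* \cong k^G \otimes kG$ (as an algebra, up to a coalgebra twist) to decompose a symmetric lazy cocycle $\sigma$ into pieces supported on $kG \otimes kG$, on $k^G \otimes k^G$, and on the mixed tensor components. Even though $k^G$ is not a Hopf subalgebra of $\DG^*$ (so the general ``top-to-bottom'' restriction is obstructed, as discussed in Section \ref{seclazy}), the laziness condition together with symmetry should give enough rigidity to analyse these components separately. I would write $\sigma$ on the basis $\{\delta_g h\}_{g,h\in G}$, and track the constraints imposed by the lazy identity $\sigma(x_{(1)},y_{(1)})x_{(2)}y_{(2)}=x_{(1)}y_{(1)}\sigma(x_{(2)},y_{(2)})$ and by $\sigma(x,y)=\sigma(y,x)$.

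The second step is to kill the ``dual'' and ``cross'' contributions by a suitable $\H^2(\DG^*)$-coboundary. For the piece supported on $k^G\otimes k^G$, the symmetry condition forces a symmetric 2-cocycle on the commutative cocommutative Hopf algebra $k^G$; such cocycles are cohomologically trivial in the Sweedler/Hopf cohomology, so one can absorb this piece by a 1-cochain $\lambda$ on $\DG^*$. The mixed cross-terms, which in the Kac--Schauenburg decomposition correspond to the pairing $\P(kG,k^G)$, behave antisymmetrically under the swap, so symmetry forces them to be coboundaries as well. After adjusting $\sigma$ by the combined coboundary $\partial\lambda$, only the $kG\otimes kG$-component survives.

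The third step is to identify the surviving piece with an element of $\Z^2_{inv}(G,k^\times)$. Since $kG$ is a genuine Hopf subalgebra of $\DG^*$, the restriction $\sigma|_{kG\otimes kG}$ is a lazy 2-cocycle on $kG$, hence corresponds to a normalised group 2-cocycle $\omega\in \Z^2(G,k^\times)$. One then checks that laziness on all of $\DG^*$ (applied to pairs $\delta_g \otimes h$ for $g,h\in G$) together with the symmetry of $\sigma$ forces the adjoint-invariance identity $\omega(ghg^{-1},gh'g^{-1}) = \omega(h,h')$, so that $\omega\in \Z^2_{inv}(G,k^\times)$. The resulting cocycle is precisely the image of $\omega$ under the map $\Z^2_{inv}(G,k^\times)\to \Z^2_L(\DG^*)$.

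The main obstacle I expect is Step 2: controlling exactly which coboundary is needed to kill the $k^G$ and mixed components, and showing that this coboundary is defined on all of $\DG^*$ (not merely on a subcoalgebra), so that the equivalence takes place in $\H^2(\DG^*)$ as claimed. A related subtlety, already flagged in the discussion preceding the lemma, is that $\H^2_L(\DG^*)$ uses a strictly smaller set of coboundaries, so one must be careful that the trivialising 1-cochain need not itself be convolution-central --- this is exactly why the statement is about equivalence in $\H^2(\DG^*)$ rather than in $\H^2_L(\DG^*)$.
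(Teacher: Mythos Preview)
Your overall strategy is correct and matches the paper's approach: decompose $\sigma$ via the three ``restrictions'' to $kG$, $k^G$, and the pairing term, use symmetry to trivialize the latter two, and identify the remaining $kG$-part with a conjugation-invariant group $2$-cocycle. The paper carries this out in exactly this spirit, but in a slightly different order and with the heavy lifting already packaged into prior lemmas.

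Concretely, the paper has already defined the three restriction maps $\sigma\mapsto\beta_\sigma$, $\sigma\mapsto\alpha_\sigma$, $\sigma\mapsto\lambda_\sigma$ in Section~\ref{seclazy}, and has proven Lemma~\ref{lm_inKernelExact}: any lazy $\sigma$ with all three restrictions cohomologically trivial is itself trivial in $\H^2(\DG^*)$. The proof of Lemma~\ref{lm_symmetric} then reduces to three lines: symmetry forces $\alpha_\sigma$ to be symmetric (hence trivial by the Corollary following Lemma~\ref{lm_BuenosAires}), forces $\lambda_\sigma=1$ outright, and leaves $\beta_\sigma\in\Z^2_{inv}(G,k^\times)$. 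Multiplying $\sigma$ by $\sigma_{\beta_\sigma}^{-1}$ gives a cocycle $\sigma'$ in the kernel of all three restrictions, and Lemma~\ref{lm_inKernelExact} finishes.

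Two remarks on your write-up. First, your Step~2 is precisely the content of Lemma~\ref{lm_inKernelExact}, and the ``main obstacle'' you flag is resolved there by the explicit cochain $\mu(g\times e_x):=\sigma^{-1}(g\times 1,1\times e_x)$, combined with the $1$-cochains trivializing $\alpha_\sigma$ and $\beta_\sigma$; you should look at that argument rather than reinvent it. Second, in your Step~3, conjugation invariance of $\beta_\sigma$ follows from laziness alone (Lemma~\ref{DGlazy}), not from symmetry; symmetry is needed only to handle $\alpha_\sigma$ and $\lambda_\sigma$.
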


\section{Preliminaries}

We assume the field $k$ is algebraically closed and has characteristic zero and we denote by $\hat{G}$ the group of $1$-dimensional characters of $G$. 

\noindent
We use some familiarity with standard notions and properties about Hopf algebras and representation theory that can be found in e.g. \cite{Kass94}. We will use the following notation for conjugation $g^t= t^{-1}gt$ whenever it is convenient. Also, we use the Sweedler notation for the coproduct of a Hopf algebra $H$: $\Delta(a)=a_1 \otimes a_2$ for $a \in H$. \\ Recall that the Drinfeld double $\DG$ of the Hopf algebra $kG$ for a finite group $G$ is a Hopf algebra defined as follows: As a vector space $\DG$ is the tensor product $k^G \otimes kG$ where $k^G:=(kG)^*=\Hom(kG,k)$. Denote the basis of $\DG$ by $\{e_x \times y\}_{x,y \in G}$ where $e_x \in k^G$ is defined by $e_x(y)=\delta_{x,y}$ for all $x,y \in G$. $\DG$ has the following Hopf algebra structure:  
$$(e_x \times y)(e_{x'} \times y') = e_x(yx'y^{-1}) (e_x \times yy') \qquad \Delta(e_x \times y) = \sum_{x_1x_2=x}(e_{x_1} \times y) \otimes (e_{x_2} \times y)$$
with the unit $1_{\DG} = \sum_{x \in G} (e_x \times 1_G)$, counit $\epsilon(e_x \times y) = \delta_{x,1_G}$ and antipode $S(e_x \times y) = e_{y^{-1}x^{-1}y} \times y^{-1}$. It is a well known fact that $\DG$ is the Doi-twist of the tensor product $k^G \otimes kG$. 

\noindent
We also use the Hopf algebra $\DG^*$ which is the dual Hopf algebra of $\DG$. For this we recall that in $\DG^*$ we have $$(x \times e_y)({x'} \times e_{y'}) = (xx' \times e_y*e_{y'}) \qquad \Delta(x \times e_y) = \sum_{y_1y_2=y}(x \times e_{y_1}) \otimes (y_1^{-1}xy_1 \times e_{y_2})$$ 
which are just the dual maps of the coproduct resp. the product in $\DG$. In the case the group $G=A$ is abelian $DA \simeq k(\hat{A}\times A)$ and $DA^* \simeq k(A\times\hat{A})$ are isomorphic to each other as Hopf algebras. In general there is no Hopf isomorphism from $\DG$ to $\DG^*$. $\DG \md\mod$ is equivalent as a (braided) monoidal category to the category of $kG$-Yetter-Drinfeld-modules and the Drinfeld center of the category of $G$-graded vector spaces. \\

\begin{definition}~
We denote by $\underline{\Aut}_{mon}(\DG\md \mod)$ the functor category of monoidal autoequivalences of $\DG \md \mod$ and natural monoidal isomorphisms. We will occasionally abuse notation by denoting only the objects of this category by $\underline{\Aut}_{mon}(\DG\md \mod)$. Let then ${\Aut_{mon}}(\DG\md \mod)$ be the group of isomorphism classes of monoidal autoequivalences of $\DG \md \mod$. \\
\end{definition}

\medskip

\subsection{Hopf-Galois-Extensions}~\\

\noindent
In order to study monoidal automorphisms of $\DG\md\mod$ we will make use of the theory of Hopf-Galois extensions. For this our main source is \cite{Schau} and \cite{Schau2}. The motivation for this approach lies mainly in the relationship between Galois extensions and monoidal functors as formulated e.g. in \cite{Schau} and also stated in Proposition \ref{fib}. Namely, monoidal functors between the category of $H$-comodules and the category of $L$-comodules are in one-to-one correspondence with $L$-$H$-Bigalois objects. For this reason we are lead to the study of $DG^*$-Bigalois extensions. Since $\DG$ is finite dimensional we can use the fact that Bigalois objects over a finite dimensional Hopf algebra can essentially be described by an automorphism of $H$ and a $2$-cocycle on $H$. We will see in a later section that it is possible to handle the automorphism group of $\DG$, but the quite large set of cocycles. There is however a special class of $2$-cocycles that are called lazy (sometimes invariant) which have a better behavior in a certain sense. These give us a large class of Bigalois objects. First, we would like to introduce some basic notions and properties of Hopf-Galois extensions.

\medskip

\begin{definition} Let $H$ be a bialgebra over $k$. A $k$-algebra $A$ is a right $H$-comodule algebra if it has a right $H$-comultiplication $\delta_R:A \to A \otimes_k H$ such that $\delta_R$ is an algebra map. A right $H$-comodule algebra $A$ is called a \emph{right} $H$-\emph{Galois extension of} $B:=A^{coH}$ if $A$ is faithfully flat over $B$ and the Galois map 
\begin{diagram} 
&A \otimes_{B} A &\rTo^{id_A \otimes \delta} &A \otimes A \otimes H &\rTo^{\mu_A \otimes id_H} &A \otimes H \\
&x \otimes y &\rMapsto^{} &x \otimes y_0 \otimes y_1 &\rMapsto^{} &xy_0 \otimes y_1 
\end{diagram} 
is a bijection. Here $A^{coH} := \{ a \in A ~|~ \delta_R(a)= a \otimes 1_H \}$ are the coinvariants of $H$ on $A$. A morphism of right $H$-Galois objects is an $H$-colinear algebra morphism. Left $H$-Galois extensions are defined similarly. Denote by $ \mathrm{Gal}_B(H)$ the set of equivalence classes of right $H$-Galois extensions of $B$. A right $H$-\emph{Galois object} is a Galois extension of the base field $B=A^{coH}=k$. 
\end{definition}

\medskip
\newpage
\begin{example}~
\begin{itemize}
\item A Hopf algebra $H$ has a natural structure of an $H$-Galois object where the coaction is given by the comultiplication in $H$. \\ 
Note that a bialgebra $H$ is a $H$-Galois object if and only if $H$ is a Hopf algebra. 
\item Let $A/k$ be a Galois field extension of $k$ with finite Galois group $G:=\Aut(A/k)$ which is the group of $k$-linear field automorphisms. Setting $H:=k^G$ the $k$-algebra $A$ is then an $H$-Galois extension of $k$ with the obvious comodule structure.  
\end{itemize}
\end{example}

\medskip

\begin{definition} Let $L,H$ be two Hopf algebras. An $L$-$H$-\emph{Bigalois object} $A$ is an $L$-$H$-bicomodule algebra which is a left $H$-Galois object and a right $L$-Galois object. Denote by $\underline{\Bigal}(L,H)$ the set of $L$-$H$-Bigalois objects and by $\underline{\Bigal}(H)$ the set of $H$-$H$-Bigalois objects. Denote by $\Bigal(L,H)$ the set of isomorphism classes of $L$-$H$-Bigalois objects and by $\Bigal(H)$ the set of isomorphism classes of $H$-$H$-Bigalois objects.    
\end{definition}

\medskip

\noindent
Recall that the \emph{cotensor product} of a right $L$-comodule $(A,\delta_R)$ and a left $L$-comodule $(B,\delta_L)$ is defined by
$$A\square_L B:=\{a\otimes b\in A\otimes B\mid \delta_R(a)\otimes b=a\otimes \delta_L(b)\}$$
Moreover, if $A$ is an $E$-$L$-Bigalois object and $B$ an $L$-$H$-Bigalois object then the cotensor product $A\square_L B$ is an $E$-$H$-Bigalois object. 

\medskip

\begin{proposition} The cotensor product gives $\Bigal(H)$ a group structure. The Hopf algebra $H$ with the natural $H$-$H$-Bigalois object structure is the unit in the group $\Bigal(H)$. Further, we can define $\Bigal$ to be a groupoid where the objects are given by Hopf algebras and the morphisms between two Hopf algebras $L$, $H$ are given by elements in $\Bigal(L,H)$. The composition of morphisms is the cotensor product.  
\label{groupoid}
\end{proposition}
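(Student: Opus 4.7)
The plan is to verify the groupoid axioms for $\Bigal$, from which the group statement for $\Bigal(H)$ follows formally as the endomorphism monoid of the object $H$ (and this monoid is a group exactly because the groupoid has inverses).

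First I would check well-definedness of the composition: the cotensor product $A\square_M B$ of an $L$-$M$-Bigalois object $A$ and an $M$-$N$-Bigalois object $B$ is again an $L$-$N$-Bigalois object. The cotensor product is a subspace of the tensor product algebra, and compatibility of the $M$-coactions with multiplication makes it a subalgebra. The outer coactions restrict to an $L$-$N$-bicomodule structure, and bijectivity of the two Galois maps of $A\square_M B$ reduces to bijectivity of the Galois maps of $A$ and $B$ via a faithful-flatness argument, exactly as in \cite{Schau,Schau2}.

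Second, associativity of the cotensor product follows from the fact that $\square_M$ is defined by an equalizer and that equalizers commute with equalizers up to a canonical isomorphism; this isomorphism respects algebra and comodule structures. Third, the unit laws: for any $L$-$H$-Bigalois object $A$ the counit contraction $A\square_H H\to A$, $a\otimes h\mapsto \epsilon(h)a$, is an isomorphism of $L$-$H$-Bigalois objects with inverse induced by the right $H$-coaction $a\mapsto a_0\otimes a_1$ (the image lies in $A\square_H H$ by coassociativity). Symmetrically $L\square_L A\cong A$. In particular $H$ with its natural structure is a two-sided identity on the object $H$, hence the unit of $\Bigal(H)$.

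The main obstacle, and the part I would not grind out by hand, is showing that every $L$-$H$-Bigalois object $A$ has an inverse, i.e.\ that there exists an $H$-$L$-Bigalois object $A^{-1}$ with $A\square_H A^{-1}\cong L$ and $A^{-1}\square_L A\cong H$. Following Schauenburg, the candidate $A^{-1}$ is constructed from $A$ using the translation map $\chi:H\to A\otimes A$ obtained by restricting the inverse of the right Galois map to $1\otimes H$; the structure maps of $A^{-1}$ are produced from $\chi$, and verifying that this yields a Bigalois object together with the required isomorphisms is a faithful-flatness computation that is the genuine technical content of the theory. Since this is exactly the theorem of \cite{Schau}, I would invoke that result rather than reprove it; with this in hand all groupoid axioms are established, and specializing the source and target to a single Hopf algebra $H$ gives the group $\Bigal(H)$ with unit $H$.
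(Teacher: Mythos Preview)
Your outline is correct and is essentially the standard argument from \cite{Schau,Schau2}; note, however, that the paper does not give its own proof of this proposition at all --- it is stated without proof as a known result from Schauenburg's theory. So there is nothing to compare against beyond observing that your sketch faithfully summarizes the proof in the references the paper cites.
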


\medskip

\begin{proposition} Given an right $H$-Galois object $A$ there exists a Hopf algebra $L(H,A)$ such that $A$ is an $L(H,A)$-$H$-Bigalois object. If $L'$ is another Hopf algebra such that $A$ is an $L'$-$H$-Bigalois object then there is an isomorphism $L(H,A) \cong L'$ that is compatible with the respective coactions on $A$. In particular, for a given right $H$-Galois object $A$ the set of all 
$L(H,A)$-$H$-Bigalois object structures on $A$ is parametrized by $\Aut_{Hopf}(L(H,A))$ up to coinner Hopf automorphisms.
\label{TK}
\end{proposition}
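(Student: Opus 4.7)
The plan is to construct $L := L(H,A)$ explicitly following Schauenburg, then derive uniqueness from a universal property, and finally translate the parametrization statement into a standard groupoid argument. The natural candidate is the equalizer
\[ L(H,A) \;:=\; \Bigl\{ \sum_i a_i \otimes b_i \in A \otimes A^{op} \;:\; \sum_i a_{i,0} \otimes b_i \otimes a_{i,1} = \sum_i a_i \otimes b_{i,0} \otimes b_{i,1} \Bigr\} \]
of the two canonical $H$-coactions on $A \otimes A^{op}$ induced by the right $H$-coaction on $A$, which is visibly a subalgebra of $A \otimes A^{op}$ with unit $1 \otimes 1$.

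First I would equip $L$ with a Hopf algebra structure by exploiting the bijectivity of the Galois map $\beta : A \otimes_{A^{coH}} A \to A \otimes H$. The translation map $\tau(h) := \beta^{-1}(1 \otimes h) \in A \otimes A$ yields a left coaction $\lambda : A \to L \otimes A$ whose image is readily checked to land in the equalizer $L$ using that $\beta$ is $H$-colinear. Coassociativity and counitality of $\lambda$ then force a unique coalgebra structure on $L$; compatibility with the inherited algebra structure and the existence of an antipode follow from the defining identities of $\tau$. The resulting $\lambda$ then makes $A$ into a left $L$-Galois object by a symmetric argument, and together with the right $H$-coaction the two are manifestly compatible, so $A$ is an $L$-$H$-Bigalois object.

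For uniqueness, given another Hopf algebra $L'$ together with a left coaction $\lambda': A \to L' \otimes A$ making $A$ an $L'$-$H$-Bigalois object, the universal property of $L$ produces a canonical Hopf map $L' \to L$ sending $\ell'$ to the element of $L \subset A \otimes A^{op}$ obtained by applying $(\mathrm{id} \otimes \beta^{-1})$ to $\lambda'$ suitably; this map is forced to be an isomorphism since both sides realise $A$ as a left Galois object. Finally, for the parametrization statement, any two left $L$-coactions on $A$ extending its fixed right $H$-Galois structure differ by precomposition with a unique $\phi \in \Aut_{Hopf}(L)$, so the set of such Bigalois structures on $A$ is a torsor under $\Aut_{Hopf}(L)$. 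Two of them yield isomorphic Bigalois objects iff they are intertwined by an $L$-$H$-bicomodule algebra automorphism of $A$; since the right $H$-coaction is fixed, such automorphisms correspond to convolution-invertible characters $f \in L^*$ via the Galois isomorphism, and the induced Hopf automorphism of $L$ is exactly the coinner one $\ell \mapsto f * \ell * f^{-1}$. The main obstacle is the careful bookkeeping in verifying that the Hopf structure on $L$ is well-defined — every structure map is implicit in the Galois isomorphism, and checking the axioms is a routine but lengthy diagram chase with the translation map $\tau$.
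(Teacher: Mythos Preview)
The paper does not supply its own proof of this proposition: it is stated as a known structural result and implicitly deferred to Schauenburg's work \cite{Schau}, \cite{Schau2} (the label \texttt{TK} presumably alludes to the Tannaka--Krein style reconstruction underlying it). There is therefore no in-paper argument to compare against; your sketch is in fact a faithful outline of Schauenburg's original construction, which is exactly what the paper is invoking.

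One technical caveat: the equalizer you write down, namely
\[
\sum_i a_{i,0}\otimes b_i\otimes a_{i,1}\;=\;\sum_i a_i\otimes b_{i,0}\otimes b_{i,1}
\quad\text{in }A\otimes A\otimes H,
\]
equates the two \emph{same-sided} right $H$-coactions on $A\otimes A^{op}$ and does not in general produce the correct object. Schauenburg's $L(A,H)$ is the $H$-coinvariants of $A\otimes A^{op}$ under the \emph{codiagonal} coaction (equivalently a cotensor product $A\,\Box_H\,\bar A$, where one copy of $A$ is turned into a left $H$-comodule via the antipode). With that correction the rest of your outline --- the translation map $\tau$ inducing the left coaction $\lambda$, the universal property giving uniqueness, and the torsor argument identifying Bigalois structures with $\Aut_{Hopf}(L)$ modulo coinner automorphisms --- is the standard argument and is correct. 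You already flag the bookkeeping as the main obstacle, and indeed that is where the only real inaccuracy in your sketch lies.
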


\medskip

\begin{example}
For a Hopf algebra automorphism $\phi\in \Aut_{Hopf}(H)$ we obtain an $H$-$H$-Bigalois object ${_\phi}H$ where the left $H$-coaction is the coproduct post-composed with $\phi$. This yields a group homomorphism $\Aut_{Hopf}(H)\to \Bigal(H)$ which in general is neither surjective nor injective.
\label{exp1}
\end{example}

\noindent
Let us now discuss the relation between Galois extensions and monoidal functors, which we will later use to study the group $\Aut_{mon}(\DG \md \mod)$. For this recall that given a Hopf algebra $H$ a \emph{fiber functor} $H \md \mathrm{comod} \rightarrow \mathrm{Vect}_k$ is a $k$-linear, monoidal, exact and faithful functor that preserves colimits. We denote by $\mathrm{Fun}_{fib}(H\md\mathrm{ comod},\mathrm{Vect}_k)$ the set of monoidal isomorphism classes of fiber functors.   

\begin{proposition}(\cite{Schau} Sect. 5) \\ 
Let $H$, $L$ be Hopf algebras then the following holds: 
\begin{itemize}
\item There is a bijection of sets 
\begin{equation}
\begin{split}
\mathrm{Gal}(H) &\stackrel{\sim}{\rightarrow} \mathrm{Fun}_{fib}(H\md \mathrm{comod},\mathrm{Vect}_k) \\
A &\mapsto (A \square_H \bullet,J^A) 
\end{split}
\label{eqn:fibfun}
\end{equation}
\item There is an equivalence of groupoids:
\begin{align*}
H &\mapsto H\md\comod \\
\Bigal(L,H) &\simeq \Equiv_{mon}(H\md\mathrm{comod},L \md\mathrm{comod}) \\
A &\mapsto (A \square_H \bullet,J^A) 
\end{align*} 
\end{itemize}
where the monoidal structure $J^A$ of the functor $ A \square_H \bullet$ is given by 
\begin{equation}
\begin{split} 
J^A_{V,W}:(A \square_H V) \otimes_k (A \square_H W) &\stackrel{\sim}{\rightarrow} A \square_H (V \otimes_k W) \\ 
     \left(\sum x_i \otimes v_i\right)\otimes \left(\sum y_i \otimes w_i\right) &\mapsto \sum x_iy_i \otimes v_i \otimes w_i 
\end{split}
\label{eqn:monoidal1}
\end{equation}
In particular, we have $\Bigal(H^*) \simeq \Aut_{mon}(H^*\md\comod) \simeq \Aut_{mon}(H \md\mod)$. 
\label{fib}
\end{proposition}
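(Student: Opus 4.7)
The plan is to observe that the bijection in the first bullet is the special case $L=k$ of the groupoid equivalence in the second: a right $H$-Galois object is the same datum as a $(k,H)$-Bigalois object, and $\Equiv_{mon}(H\md\comod,\Vect_k)$ is by definition the collection of fiber functors up to monoidal isomorphism. The final assertion $\Bigal(H^*)\simeq \Aut_{mon}(H\md\mod)$ then follows by specialising the groupoid equivalence to $L=H=H^*$, combined with the standard equivalence $H^*\md\comod\simeq H\md\mod$ valid because $H$ is finite-dimensional. So it suffices to establish the groupoid equivalence.

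For the forward direction, given $A\in\underline{\Bigal}(L,H)$ one sets $F_A:=A\,\square_H\,(-)$. For $V\in H\md\comod$, the cotensor $A\square_H V$ inherits a left $L$-coaction from $A$ (the $L$- and $H$-coactions on $A$ commute by the bicomodule axiom), giving a functor $F_A:H\md\comod\to L\md\comod$. Exactness, faithfulness and preservation of colimits all follow from faithful flatness of $A$ as a right $H$-comodule, which is built into the Galois hypothesis. The candidate monoidal constraint $J^A_{V,W}$ is well-defined because the multiplication $m_A:A\otimes A\to A$ is $L$-$H$-bicolinear (as $A$ is a bicomodule algebra on both sides), and its bijectivity is where the Galois property enters essentially: by a faithful descent argument one reduces to the free case $V=W=H$, in which $J^A$ is a twist of the Galois map and is invertible by assumption. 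The coherence (pentagon, unit) axioms for $J^A$ are routine, using associativity and the unit of $A$.

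Conversely, given a monoidal equivalence $\Phi:H\md\comod\to L\md\comod$, the inverse construction is the Tannakian-style reconstruction $A:=\Phi(H)$, where $H$ is viewed as a right $H$-comodule via its own coproduct. Then $A$ is an $L$-comodule by construction, and the monoidal structure $\Phi(H)\otimes\Phi(H)\simeq \Phi(H\otimes H)$ composed with $\Phi(m)$, where $m:H\otimes H\to H$ is $H$-colinear because $H$ is a bialgebra, endows $A$ with an associative algebra structure compatible with the $L$-coaction. The right $H$-coaction on $A$ is obtained by applying $\Phi$ to the comodule structure map $H\to H\otimes H$ and using naturality to promote this to $A\to A\otimes H$, which is automatically an algebra map. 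The main obstacle, and the technical core of the proof, is to verify that the reconstructed $A$ is actually Galois, i.e.\ that the Galois map $A\otimes A\to A\otimes H$ is bijective; this is done by inverting the argument of the previous paragraph, transferring the bijectivity of the monoidal constraint $\Phi(H)\otimes\Phi(H)\simeq \Phi(H\otimes H)$ to the Galois map, with faithful flatness of $A$ following from exactness and conservativity of $\Phi$. One finally checks that the two constructions are mutually quasi-inverse: the canonical isomorphism $A\square_H H\simeq A$ recovers $A$ from $F_A$, while the comparison map $A\square_H V\to \Phi(V)$ for $A=\Phi(H)$ is verified first on free comodules $V=H^{\oplus n}$ and then in general by exactness of both sides.
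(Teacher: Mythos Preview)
The paper does not supply its own proof of this proposition; it is stated with a citation to \cite{Schau}, Section~5, and used as a black box thereafter. So there is no in-paper argument to compare against, and your sketch should be judged on its own.

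Your outline of the groupoid equivalence (second bullet) follows the standard Ulbrich--Schauenburg pattern and is broadly correct: the forward direction uses coflatness of $A$ to obtain exactness of $A\square_H(-)$ and the Galois isomorphism to invert $J^A$; the backward direction is Tannakian reconstruction via $A=\Phi(H)$. Two points deserve tightening. First, what makes $A\square_H(-)$ exact is faithful \emph{coflatness} of $A$ as an $H$-comodule, not faithful flatness over $k$ (which is the literal hypothesis in the definition and is trivially satisfied for any $A\neq 0$); that $H$-Galois objects are faithfully coflat is true but requires an argument (e.g.\ via Schneider's theorem), so it is not ``built in'' as you say.

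Second, and more substantively, your opening reduction is wrong: a right $H$-Galois object is \emph{not} the same as a $(k,H)$-Bigalois object. For $A$ to be a left $k$-Galois object one needs $A^{\mathrm{co}\,k}=k$, but under the trivial $k$-coaction every element is coinvariant, forcing $A=k$. Equivalently, $\Equiv_{mon}(H\md\comod,\Vect_k)$ consists of monoidal \emph{equivalences}, whereas fiber functors need not be equivalences. Proposition~\ref{TK} already tells you what actually happens: any right $H$-Galois object $A$ is an $(L(H,A),H)$-Bigalois object for a canonically reconstructed Hopf algebra $L(H,A)$, generally not $k$, and the fiber functor $A\square_H(-)$ is the composite of the equivalence $H\md\comod\to L(H,A)\md\comod$ with the forgetful functor. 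So the first bullet is not the specialisation $L=k$ of the second; it needs its own argument (or a detour through $L(H,A)$), even if the ingredients are the same.
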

\medskip

\begin{example}
In Example \ref{exp1} we obtained for each $\phi\in \Aut_{Hopf}(H) \cong \Aut_{Hopf}(H^*)$ a $H^*$-Bigalois object $_\phi H^*$ isomorphic to $H^*$ as an algebra but with right comodule structure post-composed by $\phi$. Under the isomorphism above, this corresponds to the monoidal autoequivalence $(F_\phi,J^{triv})$ mapping an $H$-module $M$ to the $H$-module $_\phi M$ given by pre-composing the module structure with $\phi$ (and a trivial monoidal structure). \\
\label{fphi}
\end{example}

There is a large class of $H$-Galois extensions which come from twisting the algebra structure. In the case when $H$ is finite-dimensional or pointed all $H$-Galois extensions arise in this way. From now on we are using Sweedler notation: $\Delta(h) = h_1 \otimes h_2$. \\

\begin{definition}~
\begin{itemize}
\item Denote by $\Reg^1(H)$ the group of convolution invertible, $k$-linear maps $\eta:H \rightarrow k$ such that $\eta(1) = 1$. 
\item  Let $\Reg^2(H)$ be the group of convolution invertible, $k$-linear maps $\sigma:H\otimes H \rightarrow k$ such that $\sigma(1,h)=\epsilon(h)=\sigma(h,1)$
\item A left $2$-\emph{cocycle} on $H$ is a map $\sigma \in \Reg^2(H)$ such that for all $a,b \in H$  
\begin{equation} 
\sigma(a_{1},b_{1})\sigma(a_{2}b_{2},c) = \sigma(b_{1},c_{1})\sigma(a,b_{2}c_{2}) 
\label{cocycle}
\end{equation}
We denote the set of $2$-cocycles on $H$ by $\Z^2(H)$. 
\item We define a map $\d:\Reg^1(H) \rightarrow \Reg^2(H)$ by
$\d\eta (a,b) = \eta(a_{1})\eta(b_{1})\eta^{-1}(a_{2}b_{2})$ for all $a,b \in H$. We have $\d\eta \in \Z^2(H)$ and denote by $\H^2(H)$ the set of $2$-cocycles modulo the image of $\d$. For other properties of $\d$ see \cite{BiCa} Lemma 1.6. 
\end{itemize}
\label{regs}
\end{definition}
    
\begin{proposition} 
An $H$-Galois object $A$ is called \emph{cleft} if one of the following equivalent conditions hold:
\begin{itemize} 
\item There exists an $H$-comodule isomorphism $H \simeq A$. 
\item There exists an $H$-colinear convolution invertible map $H \rightarrow A$ 
\item There exists a $2$-cocycle $\sigma$ such that $A \simeq{_\sigma}H$ as $H$-comodule algebras, where ${_\sigma}H$ is has the $H$-comodule structure of $H$ and the following twisted algebra structure $$a \cdot_\sigma b = \sigma(a_1,b_1)a_2b_2$$ 
\end{itemize}
Two sufficient conditions such that any $H$-Galois object $A$ is cleft are: 
\begin{itemize} 
\item $H$ is finite dimensional. 
\item $H$ is pointed (all simple comodules are $1$-dimensional).  
\end{itemize}
In this case the map $\sigma \mapsto {_\sigma}H$ induces an bijection $\H^2(H) \cong \Gal(H)$. \\ Also, if we are in the cleft case the unique Hopf algebra $L(H,A)$ from Proposition \ref{TK} is given by the Doi twist $L(H,A):={_\sigma}H_{\sigma^{-1}}$ which is $H$ as a coalgebra and has the following twisted algebra structure: 
$a \cdot b := \sigma(a_1,b_1)a_2b_2\sigma^{-1}(a_3,b_3) $ 
\label{prop_cleft}
\end{proposition}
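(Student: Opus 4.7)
The plan is to prove the equivalence of the three characterizations of cleftness first by a cycle of implications, then handle the sufficient conditions, and finally address the bijection with $\H^2(H)$ and the Doi twist description of $L(H,A)$.

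For the equivalence of the three conditions, I would proceed $(i) \Rightarrow (ii) \Rightarrow (iii) \Rightarrow (i)$. An $H$-comodule isomorphism $j : H \to A$ is automatically convolution-invertible: its convolution inverse is built from the antipode as $j^{-1}(h) = S(h) \cdot j(1)^{-1}$ adjusted to stay $H$-colinear, which works because $j$ is also unital up to a scalar after normalization. For $(ii) \Rightarrow (iii)$, given an $H$-colinear convolution-invertible $j : H \to A$ that is unit-preserving (one normalizes $j(1)=1$), define
\[
\sigma(a,b) := j^{-1}(j(a_{1}) j(b_{1})\cdot \text{(project via counit)}),
\]
more precisely $\sigma(a,b)\,1_A = j(a_{1}) j(b_{1}) j^{-1}(a_{2} b_{2})$, which takes values in $A^{co H} = k$ by $H$-colinearity, and verify the $2$-cocycle identity by the associativity of the multiplication in $A$ transported along $j$. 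Then the map $j : {_{\sigma}H} \to A$ becomes an $H$-comodule algebra isomorphism by construction. Finally $(iii) \Rightarrow (i)$ is immediate because $_\sigma H$ and $H$ agree as $H$-comodules.

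For the sufficient conditions, when $H$ is finite-dimensional I would invoke the fundamental theorem of Hopf modules: any $H$-Galois object $A$ becomes a relative Hopf module, hence free of rank one as a right $H$-comodule, which supplies the comodule isomorphism $H \simeq A$. When $H$ is pointed, one can build a cleaving map inductively along the coradical filtration, using that the one-dimensional comodules are trivialized as colinear maps lift through the filtration; this is the argument of Doi-Takeuchi and Blattner-Montgomery which is by now standard. Both cases ensure that every $H$-Galois object falls into the cleft case classified by cocycles.

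For the bijection $\H^2(H) \cong \Gal(H)$, I would first show that every $2$-cocycle $\sigma$ gives a well-defined $H$-Galois object $_\sigma H$ by verifying that the twisted multiplication $a\cdot_\sigma b = \sigma(a_{1},b_{1}) a_{2} b_{2}$ is associative (directly from \eqref{cocycle}) and that the Galois map is bijective (using $S$ and $\sigma^{-1}$). Then I would prove that two cocycles $\sigma, \tau$ yield isomorphic $H$-comodule algebras if and only if $\sigma \tau^{-1} = \d\eta$ for some $\eta \in \Reg^{1}(H)$, by transporting an isomorphism through the identity on underlying comodules and reading off $\eta(h) = \phi(h)\epsilon(?)$ from the algebra compatibility. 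Surjectivity follows from the cleft assumption just proved.

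For the last assertion about $L(H,A) = {_\sigma}H_{\sigma^{-1}}$, I would use Proposition \ref{TK}: it suffices to exhibit on the vector space $H$ a Hopf algebra structure, call it $\tilde H$, and a left $\tilde H$-coaction on $A = {_\sigma}H$ compatible with the right $H$-coaction and making it a Bigalois object; uniqueness up to isomorphism then identifies $\tilde H = L(H,A)$. Defining the product $a\cdot b = \sigma(a_{1},b_{1}) a_{2} b_{2} \sigma^{-1}(a_{3},b_{3})$ and keeping the original coproduct, one checks $\tilde H$ is a Hopf algebra (both $2$-cocycle conditions appear symmetrically, cancelling to produce associativity and the antipode), and that $A$ with left coaction equal to the coproduct and right coaction also the coproduct is an $\tilde H$-$H$-Bigalois object.

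The main obstacle will be the sufficiency conditions, specifically producing a normalized cleaving map in the pointed case, since the finite-dimensional case reduces to the Hopf module theorem but the pointed case requires the inductive argument along the coradical filtration; these rely on structural results about pointed Hopf algebras that I would cite from \cite{Schau} rather than reprove, and the remaining assertions are then formal manipulations with $2$-cocycles.
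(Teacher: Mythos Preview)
The paper does not actually prove this proposition: it is stated in the preliminaries as a compilation of standard facts about cleft Hopf--Galois objects, with the implicit references being \cite{Schau} and \cite{Schau2}. There is therefore no argument in the paper to compare your proposal against; you have supplied more than the authors do.

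Your outline of the standard proofs is broadly correct, but two steps are imprecise enough to be worth flagging. First, in $(i)\Rightarrow(ii)$ your formula ``$j^{-1}(h)=S(h)\cdot j(1)^{-1}$'' does not define a convolution inverse: composing with the antipode only works when $j$ is already multiplicative, which is not assumed. The honest argument uses the Galois condition: after normalizing $j(1_H)=1_A$ and transporting the multiplication of $A$ to $H$, bijectivity of the Galois map lets one solve $h_1\cdot' \phi(h_2)=\epsilon(h)1$ for $\phi$ (equivalently, one uses the translation map $h\mapsto\beta^{-1}(1\otimes h)$). Second, for the finite-dimensional sufficiency, $A$ is a comodule algebra, not an $H$-Hopf module, so the fundamental theorem of Hopf modules does not apply to $A$ directly. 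The usual route is the Kreimer--Takeuchi theorem (finiteness and projectivity of $A$ over $A^{coH}=k$), followed by a dimension count $\dim A=\dim H$ from the Galois isomorphism and an integral argument producing a colinear section; alternatively one argues via the relative Hopf module equivalence applied to $A\otimes A\cong A\otimes H$ rather than to $A$ itself. The remaining parts of your proposal --- the extraction of $\sigma$ from a cleaving map, the pointed case via the coradical filtration, the bijection with $\H^2(H)$, and the identification $L(H,A)\cong{}_\sigma H_{\sigma^{-1}}$ via the uniqueness in Proposition~\ref{TK} --- are all handled correctly.
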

It is important to note that $\Z^2(H)$ (as well as $\H^2(H)$) is \emph{not} a group, because the convolution of two $2$-cocycles is not a $2$-cocycle in general. The convolution product $\sigma * \tau$ for $\sigma \in \Z^2(H)$ and $\tau \in \Z^2(L)$ is a $2$-cocycle in $Z^2(H)$ if $L \simeq {_\sigma}H_{\sigma^{-1}}$. One should rather, analogous to Proposition \ref{groupoid}, consider the groupoid where the objects are Hopf algebras and the morphisms between two Hopf algebras $H,L$ are given by those $2$-cocycles $\sigma \in \Z^2(H)$ that have the property $L \simeq {_\sigma}H_{\sigma^{-1}}$. 

\medskip
\noindent
Moreover, if $F:H\md\mathrm{comod} \rightarrow \mathrm{Vect}_k$ is a fiber functor then $F$ corresponds under (\ref{eqn:fibfun}) to a cleft $H$-Galois object if and only if $F$ is isomorphic to the forgetful functor if and only if $F$ preserves the dimensions of the $k$-vector spaces underlying the $H$-comodules. 

\medskip

\begin{corollary}
For a finite-dimensional Hopf algebra $H$ we have a surjection of sets
\begin{align*}
\mathrm{Z}^2(H^*) &\rightarrow \mathrm{Fun}_{fib}(H\md\mod,\mathrm{Vect}_k)  &J^{\sigma}_{V,W}:V \otimes_k W \stackrel{\simeq}{\rightarrow} V \otimes_k W \\
\sigma &\mapsto (\mathrm{ Forget},J^\sigma)                                   &v \otimes w \mapsto \sigma_1.v \otimes \sigma_2.w  
\end{align*}
Here we have identified $\sigma:H^* \otimes H^* \rightarrow k$ with an element $\sigma = \sigma_1 \otimes \sigma_2 \in H \otimes H$. \newline 
Further, this map induces a bijection of sets $\H^2(H^*) \simeq \mathrm{Fun}_{fib}(H\md\mod,\mathrm{Vect}_k)$. 
\label{jsigma}
\end{corollary}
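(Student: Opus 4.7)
The plan is to deduce this from the preceding Propositions \ref{fib} and \ref{prop_cleft} applied to the finite-dimensional Hopf algebra $H^*$, combined with the standard equivalence $H^*\md\comod \simeq H\md\mod$ available because $H$ is finite-dimensional.

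First I would verify that the assignment $\sigma \mapsto (\mathrm{Forget}, J^\sigma)$ really yields a fiber functor on $H\md\mod$. Under the identification $(H^*\otimes H^*)^* \cong H\otimes H$ sending $\sigma$ to $\sigma_1\otimes\sigma_2$, the normalization $\sigma(1,h)=\epsilon(h)=\sigma(h,1)$ produces the unit axioms for the monoidal structure on the forgetful functor, while the $2$-cocycle identity \eqref{cocycle} translates directly into the hexagon (associativity) axiom for $J^\sigma$. This ensures the map is well-defined on $\Z^2(H^*)$.

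For the surjectivity and the induced bijection, I would apply Proposition \ref{prop_cleft} with $H^*$ in place of $H$: since $H^*$ is finite-dimensional, every $H^*$-Galois object is cleft, so $\sigma\mapsto {_\sigma}H^*$ induces a bijection $\H^2(H^*)\cong \Gal(H^*)$. Composing with Proposition \ref{fib} and the equivalence $H^*\md\comod\simeq H\md\mod$ yields a chain of bijections
\[
\H^2(H^*) \;\cong\; \Gal(H^*) \;\cong\; \mathrm{Fun}_{fib}(H^*\md\comod,\Vect_k) \;\cong\; \mathrm{Fun}_{fib}(H\md\mod,\Vect_k).
\]
Lifted to the level of cocycles, this provides the claimed surjection from $\Z^2(H^*)$, and the bijection on $\H^2$ is then automatic from the definition of $\H^2$ as the quotient by coboundaries.

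The main verification is that this composite bijection agrees with the explicit formula $\sigma\mapsto(\mathrm{Forget},J^\sigma)$. Tracing through: Proposition \ref{fib} sends ${_\sigma}H^*$ to the fiber functor ${_\sigma}H^*\square_{H^*}(-)$; since ${_\sigma}H^*\cong H^*$ as right $H^*$-comodules, the underlying functor is naturally isomorphic to the forgetful functor via $\epsilon \otimes \id$. Under the monoidal structure \eqref{eqn:monoidal1}, the twisted multiplication $a\cdot_\sigma b = \sigma(a_1,b_1) a_2 b_2$ on ${_\sigma}H^*$ yields, after the identification ${_\sigma}H^*\square_{H^*} V\cong V$ and a Sweedler-index book-keeping, precisely the action $v\otimes w\mapsto \sigma_1.v\otimes\sigma_2.w$ of $\sigma \in H\otimes H$ on the module side. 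This last translation between the comodule cotensor picture and the module action picture is the only real obstacle; it is routine but must be carried out carefully to confirm that no spurious antipode or flip of Sweedler indices intervenes.
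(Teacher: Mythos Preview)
Your proposal is correct and follows essentially the same approach as the paper's own proof: invoke Proposition~\ref{fib}, use finite-dimensionality to conclude via Proposition~\ref{prop_cleft} that every $H^*$-Galois object is cleft so the underlying functor is the forgetful functor, pass through the canonical equivalence $H^*\md\comod\simeq H\md\mod$, and verify that the monoidal structure~\eqref{eqn:monoidal1} becomes the action of $\sigma\in H\otimes H$. Your write-up is in fact more careful than the paper's sketch, making explicit both the well-definedness check and the final identification of the monoidal structure.
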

\begin{proof} Use Proposition \ref{fib} and the fact that since $H$ is finite dimensional all  $H$-Galois objects are cleft. Hence the corresponding functor is the forgetful functor on objects and morphisms. Further, it is a straightforward calculation that if use the canonical equivalence $\comod\md H^* \cong H\md \mod$ and view the $2$-cocycle as an element $\sigma \in H \otimes H$ that the monoidal structure as defined in (\ref{eqn:monoidal1}) is equivalent to acting with $\sigma$. Bijectivity follows from Proposition \ref{prop_cleft}  
\end{proof}

\medskip

\noindent
In general it is very hard to control the sets $\Z^2(H)$, $\H^2(H)$ and even more the subset of Galois objects with $L(A,H)\cong H$. For this reason we consider below $H$-Galois objects that have an additional property. These $H$-Galois objects can be characterized as having a canonical choice of a Bigalois object structure and lead ultimately to Lemma \ref{lm_AutMonLazy}. An implication of that property is that they can be described by certain cohomology groups.\\

\subsection{Lazy Bigalois Objects and Lazy Cohomology}~\\

\begin{definition}\label{def_lazy}~
\begin{itemize}
\item An $H$-$H$-Bigalois object $A$ is called \emph{bicleft} if and only if $A \cong H$ as $H$-bicomodules. The group $\Bigal_{bicleft}(H)$ of bicleft $H$-$H$-Bigalois objects is a normal subgroup of $\Bigal(H)$. 
\item A right $H$-Galois object $A$ is called \emph{lazy} if there exists a unique left $H$-Galois structure such that $A$ is bicleft. Hence it is a Galois object where there is a canonical $L(H,A) \cong H$.
\item An $H$-$H$-Bigalois object $A$ is called \emph{lazy} if it is lazy as a right $H$-Galois object. Denote the group of lazy $H$-$H$-Bigalois objects by $\Bigal_{lazy}(H)$.
\end{itemize}
\end{definition}

\begin{example}
If $H$ is cocommutative, then all $H$-Galois objects and $H$-$H$-Bigalois objects are lazy.
\end{example}

\medskip

Since we are here mainly interested in the cleft case, hence the case where all $H$-Galois objects are of the form ${_\sigma}H$ for a $2$-cocycle $\sigma$, we discuss what additional property on $\sigma$ corresponds to the lazy property of the Galois object ${_\sigma}H$.   

\medskip

\begin{definition}~
\begin{itemize}
\item An $\eta \in \Reg^1(H)$ is called \emph{lazy} if it has the additional property $\eta * \id = \id * \eta$. Denote by $\mathrm{Reg}^1_L(H)$ the subgroup of such lazy regular maps.      
\item A $\sigma \in \Reg^2(H)$ is called \emph{lazy} if it commutes with the multiplication on $H$:  $$ \sigma*\mu_H = \mu_H * \sigma $$ 
and denote the subgroup of such lazy regular maps by $\Reg_L^2(H)$. Accordingly, a $2$-cocycle $\sigma \in \Z^2(H)$ is called \emph{lazy} if $\sigma \in \Reg^2_L(H)$. Denote by $\Z^2_L(H)$ the subgroup of lazy $2$-cocycles. \\ Note: The map $\d$ in Definition \ref{regs} maps $\Reg^1_L(H)$ to the center of $\Z^2_L(H)$. 
\item An $\eta \in \Reg^1(H)$ is called \emph{almost lazy} if $\d \eta$ is lazy. Denote by $\Reg_{aL}^1(H)$ the group of such almost lazy regular maps.    
\item The lazy second cohomology group is then defined by
\begin{align*}
\H^2_L(H)&:= \Z_L^2(H)/\d(\Reg_L^1(H))
\end{align*}
The set of lazy cohomology classes is defined as the set of cosets
\begin{align*}
\H^2_{aL}(H) &:= \Z_L^2(H)/\d(\Reg_{aL}^1(H)) 
\end{align*}
\end{itemize}
\end{definition}

\medskip

\begin{proposition}(\cite{BiCa} Proposition 3.6, Proposition 3.7) \\ 
An $H$-Galois object $A$ is bicleft if and only if there exists a lazy $\sigma \in Z_L^2(H)$ such that ${_{\sigma}}H_{\sigma^{-1}} \simeq A$ as $H$-bicomodule algebras. Further, the group of bicleft $H$-Bigalois objects is a normal subgroup of $\Bigal(H)$ and $\H^2_L(H) \simeq \Bigal_{bicleft}(H)$.  
\end{proposition}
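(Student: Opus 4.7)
The plan is to argue each of the three statements (bicleft $\Leftrightarrow$ lazy cocycle; normality of $\Bigal_{bicleft}(H)$; the isomorphism $\H^2_L(H)\simeq\Bigal_{bicleft}(H)$) in turn, using the cleft case of Proposition~\ref{prop_cleft} as the starting point. Throughout, I identify a cleft right $H$-Galois object with ${_\sigma}H$ for some $\sigma\in\Z^2(H)$ and use that $L(H,{_\sigma}H)={_\sigma}H_{\sigma^{-1}}$ with the Doi-twisted multiplication $a\cdot b=\sigma(a_1,b_1)a_2 b_2\sigma^{-1}(a_3,b_3)$.

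\textbf{Converse direction first, since it is the cleanest.} Given a lazy $\sigma\in\Z^2_L(H)$, I expand the Doi-twisted multiplication and use the lazy identity $\sigma(a_1,b_1)a_2 b_2=a_1 b_1 \sigma(a_2,b_2)$ to slide $\sigma$ past the product, obtaining
\[
a\cdot b=\sigma(a_1,b_1)a_2 b_2\sigma^{-1}(a_3,b_3)=a_1 b_1\sigma(a_2,b_2)\sigma^{-1}(a_3,b_3)=a_1 b_1\,\epsilon(a_2)\epsilon(b_2)=ab.
\]
Hence ${_\sigma}H_{\sigma^{-1}}=H$ as Hopf algebras, so ${_\sigma}H$ becomes an $H$-$H$-Bigalois object whose left coaction is the original $\Delta$. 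As bicomodules this object is identified with $H$, proving it is bicleft.

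\textbf{Forward direction.} Assume $A$ is bicleft, so $A\cong H$ as $H$-bicomodules. Right-cleftness gives a $2$-cocycle $\sigma\in\Z^2(H)$ with $A\cong{_\sigma}H$ as right $H$-comodule algebras. Proposition~\ref{TK} provides an isomorphism of Hopf algebras $L(H,A)\cong H$; under the identification $L(H,A)={_\sigma}H_{\sigma^{-1}}$, this is an algebra isomorphism $({_\sigma}H_{\sigma^{-1}})\to H$ that is the identity on underlying coalgebras (since the bicomodule isomorphism can be taken coalgebra-colinear via the freedom in the cleavage, modifying $\sigma$ by a suitable coboundary $\d\eta$). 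After this normalization, the Doi-twisted product equals the original one, which when unpacked reads $\sigma(a_1,b_1)a_2b_2\sigma^{-1}(a_3,b_3)=ab$ for all $a,b\in H$. Convolving both sides with $\sigma$ on the right yields $\sigma*\mu=\mu*\sigma$, i.e.\ $\sigma$ is lazy. The main technical obstacle is the \emph{normalization step}: one must verify that the freedom to rescale a cleavage by an element of $\Reg^1(H)$ can be used to make the bicomodule isomorphism the identity while keeping the cocycle well-defined; this is exactly the content of \cite{BiCa}~Proposition~3.6 and relies on the standard fact that the coboundary $\d\eta$ modifies both sides of the Doi-twist simultaneously.

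\textbf{Normality and the isomorphism with $\H^2_L(H)$.} For normality, given a bicleft object $B={_\sigma}H_{\sigma^{-1}}$ with $\sigma$ lazy, and an arbitrary $A\in\underline{\Bigal}(H)$, I compute the conjugate $A\,\square_H\,B\,\square_H\,A^{-1}$ by tracking cocycles through the cotensor product (cocycles compose by convolution, twisted by the Hopf automorphism attached to $A$); laziness is preserved because the defining relation $\sigma*\mu=\mu*\sigma$ is invariant under conjugation of $H$ by any Hopf automorphism, and untwisted convolution of a lazy cocycle with its inverse cancels. To finish, I define
\[
\Phi:\Z^2_L(H)\longrightarrow\Bigal_{bicleft}(H),\qquad \sigma\mapsto{_\sigma}H_{\sigma^{-1}},
\]
and check three points: (i)~$\Phi$ is a group homomorphism, because for lazy $\sigma,\tau$ the convolution $\sigma*\tau$ is again lazy and corresponds under cotensor product to the composite Doi-twist; (ii)~$\Phi$ is surjective by the forward direction above; (iii)~$\Phi(\sigma)$ is the trivial Bigalois object $H$ precisely when there exists an $H$-bicomodule algebra isomorphism $H\to{_\sigma}H_{\sigma^{-1}}$, which, by the standard cleft-classification, amounts to $\sigma=\d\eta$ for some $\eta\in\Reg^1(H)$; the laziness constraint on $\sigma$ then forces $\eta\in\Reg^1_L(H)$. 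Therefore $\ker\Phi=\d(\Reg^1_L(H))$ and $\H^2_L(H)\simeq\Bigal_{bicleft}(H)$ as groups.
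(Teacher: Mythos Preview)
The paper does not give its own proof of this proposition; it is quoted verbatim from \cite{BiCa} (Propositions~3.6 and~3.7). So there is nothing in the paper to compare your argument against. That said, your outline is essentially the standard proof, with two points that deserve tightening.

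\emph{Forward direction.} Your ``normalization step'' is circular as written: you defer the hard part back to \cite{BiCa}. In fact no normalization is needed. If $A$ is bicleft, take the $H$-\emph{bi}comodule isomorphism $j\colon H\to A$ itself as the cleaving map (it is right colinear and invertible, hence convolution invertible). The associated cocycle $\sigma$ then satisfies: transporting the left $H$-coaction on $A$ back along $j$ gives exactly $\Delta$ on ${_\sigma}H$. Comparing with the canonical left coaction via $L(H,{_\sigma}H)={_\sigma}H_{\sigma^{-1}}$ (which is also $\Delta$ as a coalgebra map) and invoking uniqueness (Proposition~\ref{TK}) forces the Doi-twisted product on ${_\sigma}H_{\sigma^{-1}}$ to equal the original product of $H$. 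Unwinding, $\sigma(a_1,b_1)a_2b_2\sigma^{-1}(a_3,b_3)=ab$, which after convolving with $\sigma$ is precisely $\sigma*\mu=\mu*\sigma$.

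\emph{Kernel computation.} Your sentence ``the laziness constraint on $\sigma$ then forces $\eta\in\Reg^1_L(H)$'' gives the wrong reason. An isomorphism of $H$-$H$-Bigalois objects ${_\sigma}H\to H$ is right colinear, hence of the form $h\mapsto\eta(h_1)h_2$ for some $\eta\in\Reg^1(H)$; it must also be \emph{left} colinear, and writing this out yields $\eta(h_1)h_2=h_1\eta(h_2)$, i.e.\ $\eta\in\Reg^1_L(H)$. The laziness of $\sigma$ alone would only give $\eta\in\Reg^1_{aL}(H)$, which is too weak for the quotient you want. With this correction your identification $\ker\Phi=\d(\Reg^1_L(H))$ and the resulting isomorphism $\H^2_L(H)\simeq\Bigal_{bicleft}(H)$ go through.
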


\medskip

\begin{example}~\label{exm_LazyIff} 
\begin{itemize}
\item For $\H=kG$ with $G$ a finite group the $2$-cocycle condition reduces to  
$\sigma(ab,c)\sigma(a,b) = \sigma(b,c)\sigma(a,bc)$. 
The lazy condition is automatically fulfilled and $\d$ is the differential corresponding to the bar complex, hence
$\H^2(kG) = \H^2_L(kG) = \H^2(G)$     
\item For $\H=k^G$ with $G$ a finite group, a left $2$-cocycle $\alpha$ is lazy if and only if for all $x,y,g \in G$ holds: 
$\alpha(e_{x},e_{y}) = \alpha(e_{gxg^{-1}},e_{gyg^{-1}})$. An $\eta \in \Reg^1(k^G)$ is lazy if and only if $\eta(e_x) = \eta(e_{gxg^{-1}})$ for all $g,x \in G$. \\
The non cocommutativity of $k^G$ makes a more explicit characterization quite hard. In fact, will see in Section \ref{seckupg} that one needs a lot of machinery to determine lazy $k^G$-Bigalois objects.
\end{itemize}
\end{example}

\medskip 

\begin{definition}~
\begin{itemize}
\item For a Hopf algebra $H$ denote by $\mathrm {Int}(H)$ the subgroup of \emph{internal} Hopf automorphisms. These are such $\phi \in \Aut_{Hopf}(H)$ that are of the form $\phi(h) = x h x^{-1}$ for some invertible $x \in \H$ and $x$ has the property that for all $h \in H$ \begin{equation} (x \otimes x)\Delta(x^{-1})\Delta(h) = \Delta(h)(x \otimes x)\Delta(x^{-1}) \label{eqn:int}\end{equation} 
\noindent
Note: For an invertible element $x \in H$ the conjugation $\phi(h) = x h x^{-1}$ is an algebra automorphism. It is a coalgebra automorphism if and only if $(\ref{eqn:int})$ holds.
\item Denote by $\mathrm {Inn}(H) \subset \mathrm{Int}(H)$ the subgroup of \emph{inner} Hopf automorphisms, hence $\phi \in \Aut_{Hopf}(H)$ of the form $\phi(h) = xhx^{-1}$ for some group-like $x \in H$.
\item Let $\mathrm{Out}(H) := \Aut_{Hopf}(H)/\mathrm{Inn}(H)$ the subgroup of outer Hopf automorphisms. 
\end{itemize}
\end{definition}

\begin{example}
For $H=\DG$ the group-like elements are $G(\DG)=\hat{G}\times G$, hence $\Inn(\DG) \cong \Inn(G)$. More precisely, each inner automorphism $\phi \in \Inn(\DG)$ is of the form $\phi(e_g \times h) = e_{tgt^{-1}}\times tgt^{-1}$ for some $t \in G$. 
\end{example}

\medskip

\noindent
We now discuss how the previously defined subgroups interact:

\medskip

\begin{lemma}~(\cite{BiCa} Lemma 1.15)\\
$\Aut_{Hopf}(H)$ acts on $Z^2_L(H^*)$ by $\phi.\sigma = (\phi \otimes \phi)(\sigma)$
where we identify a $2$-cocycle in $H^*$ with $\sigma = \sigma_{1} \otimes \sigma_2 \in H \otimes H$. Then for all $\phi \in \Aut_{Hopf}(H)$ we have: 
\begin{itemize} 
\item If $\omega,\sigma \in \mathrm {Z}^2(H^*)$ then $\phi.(\sigma*\omega) = (\phi.\sigma)*(\phi.\omega)$.  
\item If $\gamma \in \mathrm {Reg}^1(H^*)$ then $\phi.\d\gamma = \d(\gamma \circ \phi)$. 
\item $\Inn(H)$ acts trivially on $\Reg^2_L(H^*)$.
\item This action induces an action of $\mathrm {Out}(H)$ on lazy cohomology $\H^2_L(H^*)$ 
\end{itemize}
\label{action} 
\end{lemma}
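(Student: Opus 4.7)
The plan is to work throughout with the identification of the convolution algebras of linear maps on $H^*$ with the tensor powers of $H$ endowed with their componentwise multiplication. Under this identification a $2$-cocycle $\sigma\in\Z^2(H^*)$ is recorded as $\sigma=\sigma_1\otimes\sigma_2\in H\otimes H$, an element $\gamma\in\Reg^1(H^*)$ is recorded as an element $\gamma\in H$, and the coboundary formula becomes $\d\gamma=(\gamma\otimes\gamma)\Delta(\gamma^{-1})$. A short Sweedler calculation shows that the laziness condition $\sigma*\mu_{H^*}=\mu_{H^*}*\sigma$ in $\Hom(H^*\otimes H^*,H^*)$ is equivalent to asking that $\sigma\,\Delta(h)=\Delta(h)\,\sigma$ in $H\otimes H$ for every $h\in H$; this is the dictionary I will use throughout. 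It immediately follows that $\phi\otimes\phi$ sends lazy cocycles to lazy cocycles, since $\phi$ is both an algebra and a coalgebra map, so $(\phi\otimes\phi)\Delta(h)=\Delta(\phi(h))$ and the identity $\sigma\,\Delta(h)=\Delta(h)\,\sigma$ survives after substituting $\phi^{-1}(h)$ for $h$.

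For the first item, the convolution $\sigma*\omega$ in $\Reg^2(H^*)$ corresponds under the dictionary to $(\sigma_1\omega_1)\otimes(\sigma_2\omega_2)$, i.e.\ to the product in the tensor algebra $H\otimes H$. Since $\phi$ is an algebra map, so is $\phi\otimes\phi$, and compatibility $\phi.(\sigma*\omega)=(\phi.\sigma)*(\phi.\omega)$ is immediate. For the second item, I apply $\phi\otimes\phi$ to $(\gamma\otimes\gamma)\Delta(\gamma^{-1})$: using that $\phi$ is an algebra map on each factor and a coalgebra map to interchange $\phi\otimes\phi$ with $\Delta$, the result equals $(\phi(\gamma)\otimes\phi(\gamma))\Delta(\phi(\gamma)^{-1})=\d(\phi(\gamma))$. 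Translating $\phi(\gamma)\in H$ back into $\Reg^1(H^*)$ gives the functional $f\mapsto f(\phi(\gamma))=\gamma(f\circ\phi)$, which is precisely what the statement abbreviates by $\d(\gamma\circ\phi)$.

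For the third item, let $\phi=\mathrm{Ad}(x)$ for a group-like element $x\in H$. Then $(\phi\otimes\phi)\sigma=(x\otimes x)\,\sigma\,(x^{-1}\otimes x^{-1})$. Since $x$ is group-like, $\Delta(x)=x\otimes x$, so the lazy identity specialised at $h=x$ gives $\sigma(x\otimes x)=(x\otimes x)\sigma$, whence $\phi.\sigma=\sigma$. Combining the three items yields the fourth: items one and two show that the action preserves the subgroup of cocycles and sends coboundaries to coboundaries, so it descends to $\H^2_L(H^*)$, while item three says that $\Inn(H)$ acts trivially, so the action factors through $\Out(H)=\Aut_{Hopf}(H)/\Inn(H)$. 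One small subsidiary check is that $\phi$ also preserves the lazy subgroup $\Reg^1_L(H^*)$, but this is the same argument as for the lazy $2$-cocycles applied in a single tensor slot rather than two.

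The arguments above are essentially routine once the translation is in place, so the main obstacle is bookkeeping: being careful about whether we view $\sigma$ and $\gamma$ as linear functionals on powers of $H^*$ or as elements of powers of $H$, and verifying that laziness is stable under the action (which reduces pleasantly to the same commutation with $\Delta(H)$ that defines laziness). Beyond this I do not expect any genuine difficulty; the lemma is essentially the statement that a Hopf automorphism acts on (lazy) Hopf cohomology by pullback.
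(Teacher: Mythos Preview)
Your proof is correct. The paper does not give its own proof of this lemma; it simply attributes the result to \cite{BiCa} Lemma~1.15 and moves on. Your argument via the identification $\Reg^n(H^*)\cong H^{\otimes n}$, together with the observation that laziness translates into $\sigma\,\Delta(h)=\Delta(h)\,\sigma$, is the standard one and matches what one finds in the cited reference.
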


\medskip

\begin{lemma} Let $\phi \in \Aut_{Hopf}(H)$ be a Hopf automorphism and let $\sigma \in \Z^2_L(H^*)$ a lazy $2$-cocycle then the following are equivalent
\begin{itemize}
\item The functor $(F_\phi,J^{\sigma})$ is monoidally equivalent to $(\id,J^{triv})$ 
\item $\phi=x \cdot \id_H \cdot x^{-1} \in \mathrm{Int}(H)$ for some invertible element $x \in H$ and 
\begin{equation} 
	\sigma = \Delta(x)(x^{-1} \otimes x^{-1}) 
\label{eqn:equa2}
\end{equation}
\end{itemize} 
\end{lemma}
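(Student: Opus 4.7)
The plan is to prove both implications via a Tannaka-style argument: a monoidal natural isomorphism between such change-of-action functors must be given uniformly by left multiplication with a single invertible element of $H$, from which both the conjugation formula for $\phi$ and the coboundary formula for $\sigma$ will follow.

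For the direction $(\Rightarrow)$, I would start with a given monoidal natural isomorphism $\eta \colon (F_\phi, J^\sigma) \Rightarrow (\id, J^{triv})$ and identify it with an element of $H$. For each $m \in M$ the map $\rho_m \colon H \to M$, $h \mapsto hm$, is $H$-linear, so naturality at $\rho_m$ gives $\eta_M(am) = \eta_H(a)\cdot m$, while the $H$-linearity of $\eta_H \colon {}_\phi H \to H$ together with $y := \eta_H(1)$ yields $\eta_H(\phi(h)) = hy$, equivalently $\eta_H(b) = \phi^{-1}(b)\, y$. Combining these gives $\eta_M(m) = ym$, and comparing $\eta_M(am) = \phi^{-1}(a)\, y\, m$ with $\eta_M(am) = y(am) = yam$ on free modules forces $ya = \phi^{-1}(a)\, y$ in $H$. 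Setting $x := y^{-1}$ (invertible because $\eta_H$ is an isomorphism) yields $\phi(a) = xax^{-1}$ and $\eta_M(m) = x^{-1}m$; the coalgebra compatibility (\ref{eqn:int}) placing $\phi$ in $\mathrm{Int}(H)$ is built into the standing assumption $\phi \in \Aut_{Hopf}(H)$.

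To extract the cocycle identity, I would feed $m \otimes n$ into the monoidality equation $\eta_{M \otimes N} \circ J^\sigma_{M,N} = \eta_M \otimes \eta_N$. The left side unfolds to $x^{-1} \cdot (\sigma^{(1)} m \otimes \sigma^{(2)} n) = (\Delta(x^{-1})\, \sigma)(m \otimes n)$ using the coproduct action on the tensor product, while the right side equals $(x^{-1} \otimes x^{-1})(m \otimes n)$. Testing on regular representations forces the identity $\Delta(x^{-1})\, \sigma = x^{-1} \otimes x^{-1}$ in $H \otimes H$, equivalently $\sigma = \Delta(x)(x^{-1} \otimes x^{-1})$, as desired.

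For the converse $(\Leftarrow)$, given $\phi(h) = xhx^{-1}$ and $\sigma = \Delta(x)(x^{-1} \otimes x^{-1})$ I would simply set $\eta_M(m) := x^{-1}m$ and verify the four properties directly: $H$-linearity via $\eta_M(\phi(h)m) = x^{-1}\phi(h)m = hx^{-1}m$; naturality because $\eta$ is left multiplication; invertibility by left multiplication with $x$; and monoidal compatibility reducing exactly to the prescribed identity. The main obstacle is really the Tannaka reduction, specifically arguing rigorously that the entire natural transformation is encoded by a single element of $H$ and then tracking conventions so that the conjugation comes out as $\phi(h) = xhx^{-1}$ with the correct direction. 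Laziness of $\sigma$ enters implicitly throughout: it is precisely what makes $J^\sigma$ a morphism of $H$-modules for the $\phi$-twisted actions (so that $(F_\phi, J^\sigma)$ is a well-defined monoidal functor in the first place), and the identity $\sigma^{-1} = (x \otimes x)\Delta(x^{-1})$ makes laziness of $\sigma$ match the internal Hopf condition (\ref{eqn:int}).
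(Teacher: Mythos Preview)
Your proposal is correct and follows essentially the same Tannaka-style argument as the paper: both reduce the natural transformation to a single invertible element of $H$ by evaluating on the regular module and using naturality, then read off the conjugation formula for $\phi$ and the coboundary formula for $\sigma$ from $H$-linearity and the monoidality square. The only cosmetic differences are that you orient $\eta$ in the opposite direction (hence your $x^{-1}$ plays the role of the paper's $x$) and that you use the maps $\rho_m\colon H\to M$ to obtain $\eta_M(m)=ym$ for \emph{all} $M$ in one stroke, whereas the paper treats $H$ and $H\otimes H$ separately via naturality at $f\colon H\to H$ and $r\colon H\to H\otimes H$.
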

\label{kernel1}
\begin{proof} 
Let $\eta$ be the monoidal equivalence $(\id,J^{triv}) \sim (F_{\phi},J^{\sigma}$). Then in particular there is an $H$-module isomorphism for the regular $H$-module $\eta_H: H \stackrel{\sim}{\rightarrow} F_{\phi}(H) =: {_\phi}H$ such that $\eta_H \circ f = f \circ \eta_H $ for all $H$-module homomorphisms $f:H \rightarrow H$. Note that $\eta_H$ is determined by what it does on $1_H$ hence by an invertible element $x := \eta(1) \in H$. Further, every $H$-module morphism $f:H \rightarrow H$ is determined by an $h := f(1) \in H$ and then the naturality property of $\eta$ implies $\phi(h) = x h x^{-1}$.  
Since $\phi$ is a Hopf automorphism there are additional conditions on $x$. It is obvious that $\phi$ is an algebra automorphism for all invertible $x \in H$ but it is a coalgebra automorphism if and only if 
\[ \Delta(x^{-1})\Delta(h)\Delta(x) = (x^{-1} \otimes x^{-1})\Delta(h)(x \otimes x)  \] which is equivalent to $(\ref{eqn:int})$. Further, by definition there has to be an $H$-module isomorphism $\eta_{H \otimes H}: H\otimes H \stackrel{\sim}{\rightarrow} {_\phi}(H \otimes H)$ such that for all $H$-module morphisms $r:H \rightarrow H \otimes H$ the following diagram commutes: 
\begin{diagram}
&H &\rTo^{\eta_H} &{_\phi}H \\
&\dTo^{r} & &\dTo^{r} \\ 
&H \otimes H &\rTo^{\eta_{H\otimes H}} &{_\phi}(H\otimes H)  
\end{diagram}
Again, $r$ is determined by $y:=r(1) \in H \otimes H$ then the diagram above implies $\eta_{H\otimes H}(y) = x.y$ for all $y \in H \otimes H$. Now we use that $\eta$ is monoidal, hence in particular the diagram 
\begin{diagram}
&H\otimes H       &\rTo^{\eta_H \otimes \eta_H} &{_\phi}H \otimes {_\phi}H \\
&\dTo_{J^{triv}_{H\otimes H}} &                             &\dTo_{J^{\sigma}_{H\otimes H}} \\ 
&H \otimes H     &\rTo^{\eta_{H\otimes H}}     &{_\phi}(H\otimes H)  
\end{diagram}
commutes which implies that $J^{\sigma}_{H\otimes H}(xh \otimes xh') = \Delta(x)(h \otimes h')$ for all $h,h' \in H$ which is equivalent to $(\ref{eqn:equa2})$. \\
On the other hand let $(\phi,\sigma) \in \Aut_{Hopf}(H)\times Z^2_L(H^*)$ such that $\phi(h) = x h x^{-1}$ for some invertible $x \in H$ and such that equations ($\ref{eqn:int}$) and ($\ref{eqn:equa2}$) hold. It can be checked by direct calculation that $\phi$ is a well-defined algebra automorphism because $x$ is invertible and coalgebra morphism because of ($\ref{eqn:int}$). We claim that the following family of morphisms $\eta = \{ \eta_M: M \rightarrow {_\phi}M;\eta_M(m) = x.m \}$ is a monoidal equivalence between $(\id,J^{triv})$ and $(F_{\phi},J^{\sigma})$. The fact that $\eta$ is a natural equivalence follows from construction. The fact that $\eta$ is monoidal follows from ($\ref{eqn:equa2}$). 
\end{proof}

We collect the statements of this section:

\begin{lemma}\label{lm_AutMonLazy}
There is an assignment of sets 
\begin{center}
\begin{tabular}{rcl}
$\Psi: \;\;\Aut_{Hopf}(H) \times \Z_L^2(H^*)$ & $\;\;\;\rightarrow$  $\underline{\Bigal}(H^*)$ & $\rightarrow\;\underline{\Aut}_{mon}(H \md \mod)$\\
$(\phi, \sigma)$ &$\mapsto$  ${_\phi}({_\sigma}H^*)$ & $\mapsto {_\phi}({_\sigma}H^*) \otimes_H \bullet \; =(F_\phi,J^\sigma)$
\end{tabular}
\end{center}
where ${_\phi}({_\sigma}H^*)$ is the cleft lazy $H^*$-$H^*$-Bigalois object defined by $H^*$ with multiplication deformed by $\sigma$, right $H^*$-coaction given by comultiplication and left $H^*$-coaction given by comultiplication post-composed with $\phi$.
We obtain a group homomorphism 
$$\Aut_{Hopf}(H) \ltimes \Z^2_L(H^*) \rightarrow \Aut_{mon}(H \md \mod)$$
where $\Aut_{Hopf}(H)$ acts on $\Z^2_L(H^*)$ as defined in  Proposition \ref{action}. This map factors into a group homomorphism 
\begin{equation} \Out_{Hopf}(H) \ltimes \H^2_L(H^*) \rightarrow \Aut_{mon}(H \md \mod) \label{outh2}\end{equation}
\end{lemma}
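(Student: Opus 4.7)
The plan is to assemble $\Psi$ as the composition of two familiar constructions — building a cleft Bigalois object from $(\phi,\sigma)$, and then applying the Bigalois-to-autoequivalence functor — and to deduce multiplicativity and the factorization from results already available in Section~2. For well-definedness, given $(\phi,\sigma)\in\Aut_{Hopf}(H)\times\Z^2_L(H^*)$, Proposition~\ref{prop_cleft} produces the cleft right $H^*$-Galois object ${_\sigma}H^*$, and the lazy characterization (\cite{BiCa}, Prop.~3.6) ensures that $L(H^*,{_\sigma}H^*)\cong H^*$ canonically, so ${_\sigma}H^*$ acquires a bicleft Bigalois structure whose left coaction is the comultiplication. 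Post-composing this left coaction with $\phi\in\Aut_{Hopf}(H^*)\cong\Aut_{Hopf}(H)$ produces the twisted Bigalois object ${_\phi}({_\sigma}H^*)$, and Proposition~\ref{fib} combined with the explicit description in Corollary~\ref{jsigma} identifies its image under cotensor product as $(F_\phi,J^\sigma)$.

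For multiplicativity, I would compute the cotensor product ${_\phi}({_\sigma}H^*)\,\square_{H^*}\,{_\psi}({_\tau}H^*)$ and match the result with the composition $(F_\phi,J^\sigma)\circ(F_\psi,J^\tau)$ via Proposition~\ref{fib}. On objects, $F_\phi$ and $F_\psi$ compose to $F_{\phi\psi}$ (up to the natural pre-composition convention), while composing the monoidal structures yields the convolution $\sigma\ast(\phi.\tau)$, where $\phi.\tau=(\phi\otimes\phi)(\tau)$ is the action from Lemma~\ref{action}. That action preserves lazy cocycles (Lemma~\ref{action}), so the result lies in $\Z^2_L(H^*)$, and this matches the multiplication rule of the semidirect product $\Aut_{Hopf}(H)\ltimes\Z^2_L(H^*)$.

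For the factorization through $\Out_{Hopf}(H)\ltimes\H^2_L(H^*)$, I would show that $\Inn(H)\ltimes\d(\Reg^1_L(H^*))$ lies in $\ker\Psi$ via Lemma~\ref{kernel1}. Observe first that an $\eta\in\Reg^1_L(H^*)$, viewed as a convolution-invertible element of $H$, is central in $H$: dualizing the lazy condition $\eta\ast\id=\id\ast\eta$ on $H^*$ to an identity in $H$ gives $\eta h=h\eta$. For a group-like $t\in H$ and such $\eta$, the element $x:=t\eta^{-1}\in H$ then satisfies $\mathrm{Int}_x=\mathrm{Inn}_t$ and $\Delta(x)(x^{-1}\otimes x^{-1})=(t\otimes t)\,\d\eta\,(t^{-1}\otimes t^{-1})=(\mathrm{Inn}_t\otimes\mathrm{Inn}_t)(\d\eta)=\d\eta$, where the last equality uses triviality of the $\Inn(H)$-action on $\Reg^2_L(H^*)$ from Lemma~\ref{action}. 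By Lemma~\ref{kernel1}, $(\mathrm{Inn}_t,\d\eta)\in\ker\Psi$, and the descent of the $\Aut_{Hopf}$-action to $\Out_{Hopf}\curvearrowright\H^2_L$ (also Lemma~\ref{action}) ensures the quotient semidirect product is well-defined and that $\Psi$ descends to it.

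The main obstacle is the bookkeeping in paragraph two: verifying that the cotensor product of two twisted Bigalois objects produces exactly the pair $(\phi\psi,\sigma\ast(\phi.\tau))$, with the second component still lazy. This is a direct but somewhat tedious Sweedler-notation computation relying on the fact that $\phi$, being a Hopf automorphism, respects all structures on $H^*$; the rest of the proof is essentially a packaging of Propositions~\ref{prop_cleft} and \ref{fib}, Corollary~\ref{jsigma}, Lemma~\ref{action}, and Lemma~\ref{kernel1}.
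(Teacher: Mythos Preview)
Your proposal is correct and follows essentially the same route as the paper: verify multiplicativity by computing the composition $(F_\phi,J^\sigma)\circ(F_{\phi'},J^{\sigma'})$ directly and matching it with the semidirect-product law, then establish the factorization by applying Lemma~\ref{kernel1} to the element $x=t\eta^{-1}=\eta^{-1}t$. The only cosmetic difference is that the paper lets the grouplike factors of $t$ cancel directly in $\Delta(x)(x^{-1}\otimes x^{-1})$ rather than invoking the triviality of the $\Inn(H)$-action on $\Reg^2_L(H^*)$, so your detour through Lemma~\ref{action} there is correct but unnecessary.
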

\begin{proof}
We first check that $(\ref{outh2})$ is indeed a homomorphism. The composition in the semi-direct product $(\phi,\sigma)(\phi',\sigma') = (\phi' \circ \phi, (\phi.\sigma')\sigma)$ is mapped to the functor $(F_{\phi' \circ \phi},J^{(\phi.\sigma')\sigma})$. On the other hand the composition $(F_\phi,J^{\sigma})$ with $(F_{\phi'},J^{\sigma'})$ gives $F_{\phi'}\circ F_{\phi}=F_{\phi' \circ \phi}$ with the monoidal structure $F_{\phi'}(J^{\sigma}_{M,N})\circ J^{\sigma'}_{M_{\phi},N_{\phi}}(m \otimes n) = \sigma.(\phi.\sigma').( m\otimes n) = J^{\sigma*(\phi.\sigma')}_{M,N}(m \otimes n)$\\
Let us now show that the map factorizes as indicated: The kernel of $\Aut_{Hopf}(H) \ltimes \Z^2_L(H^*) \to \Out_{Hopf}(H) \ltimes H^2_L(H^*)$ is given by the set of all $(\phi,\sigma)$ with $\phi(h)=tht^{-1}$ for some grouplike element $t\in G(H)$ and $\sigma=\d\eta$ for $\eta\in\Reg^1_L(H^*)$. To see that the functor $\Psi(\phi,\sigma)$ is in this case trivial up to monoidal natural transformations we apply Lemma \ref{kernel1} for the element $x=\eta^{-1}\cdot t$ in $H$: We only have to check that indeed
$$\phi(h)=tht^{-1}=xhx^{-1}$$
since $\eta\in \Reg^1_L(H^*)$ is by definition (convolution-) central in $(H^*)^*=H$, and that 
$$\sigma = \d(\eta^{-1})=(\d\eta)^{-1}=\Delta(\eta)(\eta^{-1} \otimes \eta^{-1})= \Delta(x)(x^{-1} \otimes x^{-1})$$
since $t$ is grouplike. 
\end{proof}

In particular, the subgroup $\Aut_{Hopf}(H)$ is mapped to monoidal autoequivalences of the form $(F_\phi,J^{triv})$ given by a pullback of the $H$-module action along an $\phi \in \Aut_{Hopf}(\DG)$ and trivial monoidal structure (see Example \ref{fphi}). The subgroup $\Z_L^2(H^*)$ is mapped to monoidal autoequivalences of the form $(\id,J^{\sigma})$, which act trivial on objects and morphisms but have a non-trivial monoidal structure given by $J^\sigma$ (see Corollary \ref{jsigma}).
Note that according to Lemma \ref{kernel1} there are in general invertible $x\in H$ that are not group-likes but still give functors $\Psi(\phi,\sigma)$ that are trivial up to monoidal natural transformations and which are not zero in $\Out_{Hopf}(H) \ltimes \H^2_L(H^*)$.

\begin{corollary} We call the image of map (\ref{outh2}) the group of \emph{lazy monoidal autoequivalences} $\Aut_{mon,L}(H\md\mod)$ and get an short exact sequence 
 \begin{align}\label{monlazy} 0 \to \mathrm{Int}(H)/\mathrm{Inn}(H) \to  \mathrm{Out}_{Hopf}(H) \ltimes \H^2_L(H^*) \to \Aut_{mon,L}(H\md \mod) \to 0 \end{align}
Further, there is an embedding of groups $\Out_{Hopf}(H) \to \Aut_{mon,L}(H \md \mod)$ and we have an bijection of sets of cosets 
\begin{align}\label{monalmoustlazy} \H^2_{aL}(H^*) \simeq \Aut_{mon,L}(H\md\mod)/\Out_{Hopf}(H) \end{align} 
\end{corollary}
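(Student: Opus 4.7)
My plan is to extract all three assertions from a careful analysis of the map (\ref{outh2}), using Lemma \ref{kernel1} as the central tool throughout. Lemma \ref{kernel1} tells us that $\Psi(\phi,\sigma)$ is monoidally trivial precisely for pairs of the form $(\mathrm{Ad}(x),\sigma_x)$, where $x$ is an invertible element of $H$ satisfying (\ref{eqn:int}) and $\sigma_x := \Delta(x)(x^{-1}\otimes x^{-1})$.

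For the short exact sequence (\ref{monlazy}), surjectivity of (\ref{outh2}) holds by the definition of $\Aut_{mon,L}$. To identify the kernel with $\mathrm{Int}(H)/\Inn(H)$, I would use the first-coordinate projection $\ker\Psi\to\Out_{Hopf}(H)$: by Lemma \ref{kernel1} its image sits inside $\mathrm{Int}(H)/\Inn(H)$ and hits each such class, since for any $\phi=\mathrm{Ad}(x)\in\mathrm{Int}(H)$ the pair $([\phi],[\sigma_x])$ lies in $\ker\Psi$. For injectivity, if $[\phi]\in\Inn(H)$ write $x = t\cdot z$ with $t$ grouplike and $z$ central; a short computation yields $\sigma_x = \sigma_t \cdot \sigma_z = 1 \cdot \d(z^{-1})$, which is trivial in $\H^2_L(H^*)$ since $z$ central in $H$ means $z^{-1}\in\Reg^1_L(H^*)$.

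For the embedding $\Out_{Hopf}(H)\hookrightarrow \Aut_{mon,L}(H\md\mod)$, I restrict $\Psi$ to the subgroup $\Out_{Hopf}(H) \times \{1\}$. Well-definedness on $\Out_{Hopf}(H)$ comes from $\sigma_t = 1$ for grouplike $t$, so inner Hopf automorphisms yield trivial functors. Injectivity is the converse half of Lemma \ref{kernel1}: if $(F_\phi,J^{triv})$ is trivial, then $1 = \sigma_x = \Delta(x)(x^{-1}\otimes x^{-1})$ forces $\Delta(x) = x\otimes x$, making $x$ grouplike and $[\phi]\in\Inn(H)$.

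For the bijection (\ref{monalmoustlazy}), I define $\Z^2_L(H^*) \to \Aut_{mon,L}(H\md\mod)/\Out_{Hopf}(H)$ by $\sigma \mapsto [(\id,J^\sigma)]\cdot \Out_{Hopf}(H)$. Surjectivity follows from rewriting $\Psi(\phi,\sigma) = \Psi(\phi,1)\cdot \Psi(\id,\sigma)$ via the semi-direct product law. Two cocycles $\sigma,\sigma'$ lie in the same coset iff, after unwinding the composition rule of Lemma \ref{lm_AutMonLazy}, one has $\sigma'\sigma^{-1} = \mathrm{Ad}(x^{-1}).\sigma_x$ for some invertible $x$ with $\mathrm{Ad}(x)\in\Aut_{Hopf}(H)$. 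A direct manipulation yields $\mathrm{Ad}(x^{-1}).\sigma_x = (x^{-1}\otimes x^{-1})\Delta(x) = \d(x^{-1})$. The main technical step, which I expect to be the principal obstacle, is verifying the identification $\Reg^1_{aL}(H^*) = \{x^{-1}\in H^\times \mid \mathrm{Ad}(x)\in \Aut_{Hopf}(H)\}$: writing out that $\d(x^{-1})$ is lazy and multiplying the resulting commutation identity by $(x\otimes x)$ on the left and $\Delta(x^{-1})$ on the right should reduce it to the coalgebra-compatibility condition (\ref{eqn:int}) for $\mathrm{Ad}(x)$. Granting this identification, the fiber condition becomes $\sigma'\sigma^{-1}\in \d(\Reg^1_{aL}(H^*))$, yielding the claimed bijection with $\H^2_{aL}(H^*)$.
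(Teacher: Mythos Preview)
Your proposal is correct and follows essentially the same route as the paper --- both arguments extract everything from Lemma \ref{kernel1}. The paper's proof is terse (it only writes out the bijection (\ref{monalmoustlazy}), defining the map in the opposite direction $[\phi,\sigma]\mapsto[\sigma]$ and asserting well-definedness, while declaring ``the rest is easy to check''), whereas you supply the details the paper omits, in particular the identification of $\Reg^1_{aL}(H^*)$ with the invertible elements $x$ for which $\mathrm{Ad}(x)\in\Aut_{Hopf}(H)$; this is indeed the crux and your reduction of the laziness of $\d(x^{-1})$ to condition (\ref{eqn:int}) is exactly right.

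Two minor slips that do not affect the argument: the factorization should read $\Psi(\phi,\sigma)=\Psi(\id,\sigma)\cdot\Psi(\phi,1)$ (check the semidirect product law in Lemma \ref{lm_AutMonLazy}), which is the order you actually need for right cosets; and in your injectivity step for (\ref{monlazy}), the identity $\sigma_{tz}=\sigma_t\cdot\sigma_z$ is not literal --- one gets $\sigma_{tz}=(t\otimes t)\sigma_z(t^{-1}\otimes t^{-1})$, but since $t$ is grouplike and $z$ central this still collapses to $\d(z^{-1})$ as you claim.
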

\begin{proof}
For the claim (\ref{monalmoustlazy}) we use the exact sequence to define the map from right to left by $[\phi,\sigma] \mapsto [\sigma]$ which is well-defined because a different representative in $[\phi,\sigma]$ would be mapped to a $2$-cocycle that differs by an almost lazy coboundary from $\sigma$. The rest is easy to check.     
\end{proof}

\section{Monoidal autoequivalences of $\Rep(G)$}
\label{seckupg}
We want to determine the lazy cohomology on $k^G$. We obtain this by using Movshev's classification of $k^G$-Galois objects \cite{Mov93} and the additional result in the form presented in Davydov \cite{Dav01} and apply these to the lazy Galois objects. Further, we give an explicit $2$-cocycle representing a lazy cohomology class on $k^G$. \\

Let us recall that for a finite group $G$ a left $G$-algebra (equivalently a right $k^G$-comodule algebra) is an associative algebra $R$ together with a left $G$-action given by a homomorphism $G \rightarrow \Aut(R)$. A $G$-algebra is Galois if the algebra homomorphism 
\begin{align*}
\theta: \;R\otimes kG \rightarrow \End(R) \quad\quad \theta(r\otimes g)(r') = r(g.r')
\end{align*}
is an isomorphism. Therefore, a left $G$-Galois algebra is a right $k^G$-Galois object. For any group 
$S$ and any $2$-cocycle $\eta \in Z^2(S)$ we view the twisted group algebra $k_\eta S$ as a $S$-algebra with the associative multiplication given by $g \cdot h = \eta(g,h)gh$ and the left $S$-action $g.h = g \cdot h \cdot g^{-1}$. 
Given a subgroup $S$ of $G$ and a $S$-algebra $M$ there is a natural way to construct a $G$-algebra by induction: 
\[ ind^S_G(M) := \{ r:G \rightarrow B \mid r(sg) = s.r(g)  \}  \]
which is an associative algebra with the pointwise multiplication of maps and has a left $G$-action given by $(g'.r)(g) = r(gg')$. We are now ready to present the first classification result.  

\medskip

\begin{lemma}\label{lm_DavydovClassification}(\cite{Dav01} Theorem 3.8) \\
Let $k$ be an algebraically closed field of characteristic zero. Then there is a bijection between isomorphism classes of right $k^G$-Galois objects and conjugacy classes $(S,[\eta])$ where $S$ is a subgroup of $G$ and $[\eta] \in \H^2(S,k^\times)$ for $\eta$ a non-degenerate $2$-cocycle. The isomorphism assigns to a conjugacy class $(S,[\eta])$ the isomorphism class of \[ R(S,\eta) :=  \{ r:G \rightarrow k_{\eta}S \mid r(sg) = s.r(g)\; \forall s \in S,g \in G \} \] with multiplication given by pointwise multiplication of functions and with a $G$-action given by $(g.r)(h)=r(hg)$ or equivalently a $k^G$-coaction given by $ \delta (r) = \sum_g e_g \otimes g.r$.    
\end{lemma}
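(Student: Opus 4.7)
My plan is to establish the bijection by constructing maps in both directions and checking they are mutually inverse up to the stated $G$-conjugacy. Since $k$ is algebraically closed of characteristic zero, the structural input I would exploit is that any $k^G$-Galois algebra $A$ is semisimple with $\dim A = |G|$ (the first from the Galois condition combined with separability, the second from bijectivity of the Galois map after counting dimensions).

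For the forward direction $(S,[\eta])\mapsto R(S,\eta)$, I would first check that $R(S,\eta)$ has dimension $|G|$ since a section over each coset in $S\backslash G$ contributes $\dim k_\eta S = |S|$ degrees of freedom. Next I would verify that the Galois map $R(S,\eta)\otimes kG\to \End(R(S,\eta))$ is bijective. Non-degeneracy of $\eta$ is equivalent to $k_\eta S \cong M_{\sqrt{|S|}}(k)$ being central simple, so $R(S,\eta)$ decomposes as a direct sum over $G/S$ of copies of this matrix algebra, which is precisely the structure needed: the Galois map then identifies with the obvious isomorphism $M_{\sqrt{|S|}}(k)^{[G:S]}\otimes kG \to \End(R(S,\eta))$, whose bijectivity follows by dimension count once one checks it is surjective onto each matrix block and the cross-block permutation actions.

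For the inverse direction, starting with a left $G$-Galois algebra $A$, I would decompose $A = \bigoplus_i A_i$ into simple (matrix algebra) factors, observe that $G$ permutes the index set by algebra automorphisms, fix one factor $A_0\cong M_n(k)$, and let $S\le G$ be its stabiliser. Then $A \cong \mathrm{ind}_S^G(A_0)$ as $G$-algebras. By the Skolem--Noether theorem, the $S$-action on $M_n(k)$ lifts to a projective representation of $S$ on $k^n$, producing a class $[\eta]\in \H^2(S,k^\times)$. The Galois condition forces $n^2[G:S]=|G|$, i.e.\ $n^2=|S|$, which is exactly the non-degeneracy condition on $\eta$. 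It remains to identify $A_0\cong M_n(k)$ with $k_\eta S$ as an $S$-algebra, so that $A\cong R(S,\eta)$.

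The last step is to check well-definedness and injectivity modulo $G$-conjugation. Replacing $A_0$ by another simple block $A_0' = g\cdot A_0$ produces the pair $(gSg^{-1},g\cdot[\eta])$, and the Skolem--Noether lift is unique up to inner automorphisms and hence $[\eta]$ is unique up to coboundaries; conversely, conjugate pairs clearly give isomorphic induced algebras. The hardest part of the argument is showing that $(S,[\eta])$ is genuinely an invariant of $A$: the stabiliser of a simple block is easy once one knows the simple decomposition is canonical, but one must verify that the associated projective representation class is independent of the choice of matrix-algebra presentation of $A_0$ and of the section $S\to \Aut(A_0)$ used to extract the cocycle. This is the technical core that the Movshev/Davydov argument is designed to address, and where I would spend most of the effort in a self-contained proof.
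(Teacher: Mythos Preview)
The paper does not supply its own proof of this lemma; it is quoted verbatim from \cite{Dav01} (Movshev's original argument underlies it). Your sketch is precisely the Movshev/Davydov line: decompose the semisimple $G$-algebra into matrix blocks, extract the stabiliser $S$ and the Skolem--Noether cocycle $\eta$, and check non-degeneracy via the dimension count $|S|=n^2$.

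One point you glide over: in the inverse direction you need that $G$ acts \emph{transitively} on the set of simple factors in order to write $A\cong \mathrm{ind}_S^G(A_0)$. This is where the Galois-object condition $A^{co\,k^G}=A^G=k$ enters --- a nontrivial $G$-orbit decomposition of the central idempotents would produce a nontrivial $G$-invariant central idempotent. You should make this step explicit; otherwise the argument is sound.
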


\medskip

\begin{lemma}\label{lm_GPrimeIff}(\cite{Dav01}: Proposition 6.1) \\
Let $R(S,\eta)$ be a right $k^G$-Galois object as above and let $G':=\Aut_G(R)$. The following are equivalent: 
\begin{itemize} 
\item $R(S,\eta)$ is an $k^{G'}$-$k^{G}$-Bigalois object with the natural left $k^{G'}$-coaction resp. right $G'$-action $r.f = f(r)$.  
\item $|G'|=|G|$.    
\item $S$ is an abelian normal subgroup of $G$ and the class $[\eta]$ is $G$-invariant.     
\end{itemize}
\end{lemma}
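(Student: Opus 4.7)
The plan is to prove the three equivalences via the cycle (i) $\Rightarrow$ (ii) $\Rightarrow$ (iii) $\Rightarrow$ (i).

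The implication (i) $\Rightarrow$ (ii) is a direct dimension count. A right $k^{G'}$-Galois object has $k$-dimension $|G'|$, because the Galois isomorphism $R\otimes_k R\simeq R\otimes_k k^{G'}$ forces $(\dim_k R)^2=\dim_k R\cdot |G'|$. From the explicit description $R(S,\eta)=\{r:G\to k_\eta S\mid r(sg)=s.r(g)\}$ one reads off $\dim_k R(S,\eta)=[G:S]\cdot |S|=|G|$, whence $|G'|=|G|$.

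For (ii) $\Rightarrow$ (iii) the idea is to identify $\Aut_G(R(S,\eta))$ explicitly in terms of the pair $(S,\eta)$. A $G$-equivariant algebra automorphism of $R$ is determined by what it does on a single fibre of the induced algebra, on which it must commute with the residual conjugation $S$-action on $k_\eta S$. Such automorphisms split naturally into two sources: firstly, automorphisms of $k_\eta S$ commuting with the $S$-action (whose $G$-equivariant lifts exist only when the class $[\eta]$ is $G$-invariant), and secondly, permutations of fibres controlled by the normalizer $N_G(S)/S$ (which only arise if $S\trianglelefteq G$). A careful bookkeeping yields an upper bound $|G'|\le |G|$, with the equality $|G'|=|G|$ forcing in turn $S$ abelian (so that the first source contributes exactly $|\widehat{S}|=|S|$), $S$ normal in $G$ (so that the second source contributes exactly $[G:S]$), and $[\eta]$ $G$-invariant (so that all lifts are realized).

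For (iii) $\Rightarrow$ (i) I would construct the left $k^{G'}$-Bigalois structure directly. Under the three conditions, $k_\eta S$ carries an extended $G$-action compatible with the conjugation action of $S$; the $G$-invariance of $[\eta]$ provides the required $1$-cochains and the normality of $S$ makes this extension well defined. This induces a left action of a suitable extension $G'$ of $G/S$ by $\widehat S$ on $R(S,\eta)$ which commutes with the right $G$-action. A direct verification then shows that $\delta_L(r)=\sum_{f\in G'}e_f\otimes f(r)$ defines a $k^{G'}$-comodule algebra structure compatible with the right $k^G$-coaction, and the corresponding left Galois map is bijective by a rank argument using $|G'|=|G|=\dim_k R$.

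The main obstacle is the explicit determination of $\Aut_G(R(S,\eta))$ needed in (ii) $\Rightarrow$ (iii): the intertwining between the inner $S$-action on $k_\eta S$, the permutation of fibres by coset representatives, and the cocycle $\eta$ yields a subtle group extension whose order must be computed precisely in terms of the $G$-invariance of $[\eta]$ and of the structural data of $S$. This is essentially the content of Proposition 6.1 of \cite{Dav01}, whose argument we would follow and adapt as needed.
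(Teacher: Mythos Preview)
The paper does not actually prove this lemma: it is stated with the attribution ``(\cite{Dav01}: Proposition 6.1)'' and no proof is given. So there is no ``paper's own proof'' to compare your proposal against; the paper simply imports the result from Davydov.

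Your sketch is a reasonable outline of the argument and you correctly identify that the substantive step (ii) $\Rightarrow$ (iii) is precisely the content of Davydov's Proposition~6.1. Two minor remarks: in (i) $\Rightarrow$ (ii) the Bigalois object is a \emph{left} $k^{G'}$-Galois object rather than a right one, though the dimension count is of course identical; and in (ii) $\Rightarrow$ (iii) your description of $\Aut_G(R(S,\eta))$ as splitting into ``fibre automorphisms'' and ``fibre permutations by $N_G(S)/S$'' is heuristically right but the actual structure is an extension rather than a product, which is exactly the subtlety you flag at the end and defer to \cite{Dav01}.
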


Relevant to this article is to find all Bigalois objects with $G\cong G'$ from this description. We now determine for the previously introduced Bigalois objects the explicit $2$-cocycles:

\begin{lemma}\label{lm_BuenosAires}
Let $S$ be a normal abelian subgroup of $G$ and $\eta \in \Z^2(S)$ a non-degenerate $2$-cocycle i.e. $\langle s,t\rangle:=\eta(s,t)\eta(t,s)^{-1}$ is a non-degenerate bimultiplicative symplectic form. Further, assume for simplicity that $\eta(s,s^{-1}) = 1$ for all $s \in S$ (Note that this is always possible up to cohomology). Then there is an isomorphism of $k^G$-comodule algebras: $(k^G)_{\alpha}\simeq R(S,\eta)$ where $\alpha \in \Z^2(k^G)$ is defined by $$\alpha(f,f')=\frac{1}{|S|^2}\sum_{r,t,r',t'\in S}\eta(t,t')\langle t,r\rangle\langle t',r'\rangle f(r)f'(r') $$ 
\end{lemma}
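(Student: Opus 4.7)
The plan is to construct an explicit linear map $\Phi: k^G\to R(S,\eta)$ and verify that it furnishes an isomorphism $(k^G)_\alpha\xrightarrow{\sim} R(S,\eta)$ of $k^G$-comodule algebras. The key input is the non-degeneracy of $\langle -,-\rangle$ on the abelian group $S$, which provides the Fourier isomorphism $\psi: S\to\hat S$, $s\mapsto\langle s,-\rangle$, and the inversion formula $\frac{1}{|S|}\sum_{s\in S}\langle s,u\rangle=\delta_{u,1}$ for $u\in S$.

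Define $\tilde F(s):=\frac{1}{|S|}\sum_{t\in S}\langle t,s\rangle\,e_t\in k_\eta S$ for $s\in S$, and set
\[
\Phi(e_g)(h)\;:=\;\begin{cases}\tilde F(gh^{-1})& \text{if }gh^{-1}\in S,\\ 0 & \text{otherwise.}\end{cases}
\]
The support of $\Phi(e_g)$ is the coset $Sg$. The $S$-equivariance $\Phi(e_g)(sh)=s.\Phi(e_g)(h)$ reduces to the identity $s.e_t=\langle s,t\rangle\,e_t$ in $k_\eta S$, which in turn follows from the bimultiplicativity of $\langle -,-\rangle$ together with the normalization $\eta(s,s^{-1})=1$ (the latter ensures that $s^{-1}=e_{s^{-1}}$ in $k_\eta S$ and produces a clean conjugation formula). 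So $\Phi$ is well-defined, it is a linear bijection onto $R(S,\eta)$, and it is immediately $G$-equivariant with respect to the natural $G$-actions on both sides, hence $k^G$-colinear.

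It then remains to verify that $\Phi$ intertwines the twisted multiplication $e_a\cdot_\alpha e_b=\sum_y\alpha(e_{ay^{-1}},e_{by^{-1}})\,e_y$ with the pointwise multiplication on $R(S,\eta)$. Evaluating $\Phi(e_a)\Phi(e_b)$ and $\Phi(e_a\cdot_\alpha e_b)$ at a common point $h\in G$, both sides vanish unless $ah^{-1},\,bh^{-1}\in S$: the former because $\tilde F$ is supported on $S$, and the latter because $\alpha$ is supported on $S\times S$ by inspection of its formula. When $ah^{-1},\,bh^{-1}\in S$, the pointwise product $\tilde F(ah^{-1})\tilde F(bh^{-1})$ expands, using $e_t\cdot e_{t'}=\eta(t,t')\,e_{tt'}$, to yield
\[
\frac{1}{|S|^2}\sum_{t,t'\in S}\eta(t,t')\,\langle t,ah^{-1}\rangle\,\langle t',bh^{-1}\rangle\, e_{tt'}.
\]
On the other side, after substituting $y=sh$ with $s\in S$ and applying the Fourier inversion $\sum_{s\in S}\langle(tt')^{-1},s\rangle\,\tilde F(s)=e_{tt'}$, one obtains the same expression.

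The main obstacle is bookkeeping: tracking the origins of the factors $1/|S|$, the pairings $\langle t,s\rangle^{\pm 1}$, and the cocycle values $\eta(t,t')$, and ensuring the two Fourier inversions are performed on the correct terms. Nothing deeper is needed beyond the bimultiplicativity of $\langle -,-\rangle$, the cocycle condition for $\eta$, and the normalization $\eta(s,s^{-1})=1$. Note that the non-abelian structure of $G$ plays only an auxiliary role: the cocycle $\alpha$ and the isomorphism $\Phi$ are essentially determined by the abelian subgroup $S$ together with the coset decomposition $G/S$.
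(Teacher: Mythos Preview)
Your proposal is correct and follows essentially the same route as the paper: you construct the identical map $\Phi$ (your $\tilde F(gh^{-1})$ is exactly the paper's $\frac{1}{|S|}\sum_{t\in S}t\,\langle t,gh^{-1}\rangle$, just with $e_t$ in place of $t$ for the basis of $k_\eta S$), and you verify $S$-equivariance, $G$-equivariance, and the algebra-morphism property via the same Fourier-inversion mechanism. The only point where the paper is more explicit is bijectivity, which it argues coset-by-coset as a Fourier transform on $S$ with the non-degenerate form $\langle\,,\,\rangle$; you assert this without detail, but since you have already set up the Fourier isomorphism $\psi:S\to\hat S$ and the inversion formula, filling this in is immediate.
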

\begin{proof}
We show that an isomorphism $\Phi:(k^G)_{\alpha} \mapsto R(S,\eta)$ is given by
$$\Phi:f\mapsto \frac{1}{|S|}\left(h\mapsto \sum_{t,r\in S}t f(rh) \langle t,r\rangle \right)$$     
We check that $\Phi(f)$ is a $kS$-linear map:
\begin{align*}
	\Phi(f)(sg)
	&=\frac{1}{|S|}\sum_{t,r\in S}tf(rsg)\langle t,r\rangle 
	 =\frac{1}{|S|}\sum_{t,r'}tf(r'g)\langle t,r'\rangle\langle t,s\rangle^{-1}\\
	&=\frac{1}{|S|}\sum_{t,r'}\left(\eta(s,t)t\eta(t,s)^{-1}\right)f(r'g)\langle t,r'\rangle\\
	&=\frac{1}{|S|}\sum_{t,r'}\left(\eta(s,t)t\frac{\eta(st,s^{-1})}{\eta(t,1)\eta(s,s^{-1})}\right)f(r'g)
	\langle t,r'\rangle\\
	&=\frac{1}{|S|}\sum_{t,r'}\left(s.t\right)f(r'g)\langle t,r'\rangle
	=s.\Phi(f)(g)
\end{align*}
Next we check this is a $kG$-module morphism:
\begin{align*}
	\Phi(h.f)
	 =\Phi(f_{2}(h)f_{1})
	=\Phi(g'\mapsto f(g'h)) 
	 =\frac{1}{|S|}(g\mapsto \sum_{t,r\in S}tf(rgh)\langle t,r\rangle )
	=h.\Phi(f)
\end{align*}
Further, it is an algebra morphism: $\Phi(1)(g)
	=\frac{1}{|S|^2}\sum_{t,r\in S}t \langle t,r\rangle
	=\sum_{t\in S}t \delta_{t,1}=1$
\begin{align*}
	\left(\Phi(f)\Phi(f')\right)(g)
	&=\frac{1}{|S|^2}\left(\sum_{t,r\in S}tf(rg)\langle t,r\rangle\right)\cdot_\eta
	\left(\sum_{t',r'\in S}t'f(r'g)\langle t',r'\rangle\right)\\
	&=\frac{1}{|S|^2}\sum_{t,r,t',r'\in S}tt'\eta(t,t')\cdot 
	f(rg)\langle t,r\rangle f'(r'g)\langle t',r'\rangle\\
	&=\frac{1}{|S|^2}\sum_{r,t,r',t'\in S} 
	\left(\frac{1}{|S|}\sum_{\tilde{t},\tilde{r}\in S}\tilde{t}\langle\tilde{t},\tilde{r}\rangle
	\langle t,\tilde{r}^{-1}\rangle\langle t',\tilde{r}^{-1}\rangle\right)\\
	&\cdot f_{1}(r)f'_{1}(r')\langle t,r\rangle\langle t',r'\rangle
	\eta(t,t')
	\cdot f_{2}(g)f'_{2}(g)\\
	&=\frac{1}{|S|^3}\sum_{\tilde{t},\tilde{r},r,t,r',t'\in S} \tilde{t}\langle\tilde{t},\tilde{r}\rangle\\
	&\cdot f_{1}(r)f'_{1}(r')\langle t,r\tilde{r}^{-1}\rangle\langle t',r'\tilde{r}^{-1}\rangle
	\eta(t,t')
	\cdot f_{2}(g)f'_{2}(g)\\
	\substack{r''=r\tilde{r}^{-1}\\r'''=r'\tilde{r}^{-1}}\quad
	&=\frac{1}{|S|^3}\sum_{\tilde{t},\tilde{r},r'',t,r''',t'\in S} \tilde{t}\langle\tilde{t},\tilde{r}\rangle\\
	&\cdot f_{1}(r'')f'_{1}(r''')\langle t,r''\rangle\langle t',r'''\rangle\eta(t,t')
	\cdot f_{2}(\tilde{r})f'_{2}(\tilde{r})
	\cdot f_{3}(g)f'_{3}(g)\\
	&=\frac{1}{|S|}\sum_{\tilde{t},\tilde{r}\in S} \tilde{t}\langle\tilde{t},\tilde{r}\rangle
	\cdot \alpha(f_{1},f'_{1})f_{2}(\tilde{r}g)f'_{2}(\tilde{r}g) 
	=\Phi(f\cdot_\alpha f')(g)
\end{align*} 
We finally check bijectivity of $\Phi$. We first note that for any coset $Sg\subset G$ the functions $f$ which are nonzero only on $Sg$ are sent to functions $\Phi(f)$ which are nonzero only on $Sg$. With the fixed representative $g$ of a coset $Sg$ we consider the basis $e_{sg}$ for $k^{Sg}$. By construction this element is mapped to the following element in $\Hom_{kS}(k[Sg],kS)$:
\begin{align*}
\Phi(e_{sg})
&=\frac{1}{|S|}\left(s'g\mapsto \sum_{t,r\in S}t e_{sg}(rs'g)\langle t,r\rangle\right)
=\frac{1}{|S|}\left(s'g\mapsto \sum_{t\in S}t\langle t,s\rangle\langle t,s'\rangle^{-1}\right)\\
\end{align*}
This is by construction a Fourier transform with a non-degenerate form $\langle,\rangle$, which implies bijectivity. More explicitly, we show injectivity by considering $s'=1$, then we get elements $\sum_{t\in S}t\langle t,s\rangle$ in $S$, which are linearly independent by the assumed non-degeneracy. Now bijectivity follows because source and target have dimension $|S|$. 
\end{proof}

In particular, the assumption in Lemma \ref{lm_GPrimeIff} is that the class $[\eta]\in \H^2(S)$ is $G$-invariant and hence the alternating bicharacter (symplectic form) $\langle,\rangle$ on $S$ is $G$-invariant. Then the criterion in Example \ref{exm_LazyIff} 2 shows easily $\alpha$ is lazy iff it is $G$-invariant and an easy calculation shows:

\begin{corollary}
A lazy cocycle for $k^G$ is up to cohomology the restriction of a $G$-invariant $2$-cocycle $\eta$ (not just an $G$-invariant class $[\eta]$) on a normal abelian subgroup $S$ in the sense of Lemma \ref{lm_BuenosAires}. \\
\end{corollary}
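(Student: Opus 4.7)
My plan is to apply Davydov's classification to the Galois object of the lazy cocycle, use the bicleft criterion to pin down the group-theoretic structure, lift the resulting $G$-invariant cohomology class to a pointwise $G$-invariant cocycle, and then match against the explicit formula of Lemma \ref{lm_BuenosAires}. First, given $\alpha\in\Z^2_L(k^G)$, the associated Galois object ${_\alpha}(k^G)$ is bicleft, hence by Lemma \ref{lm_DavydovClassification} is isomorphic as a $k^G$-comodule algebra to some $R(S,[\eta])$ with $[\eta]\in\H^2(S,k^\times)$ non-degenerate. Since bicleftness in particular forces $|G'|=|G|$, Lemma \ref{lm_GPrimeIff} immediately yields that $S$ is normal abelian in $G$ and that the class $[\eta]$ is $G$-invariant.

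The main obstacle is the second step: producing a pointwise $G$-invariant cocycle representative $\bar\eta$ of the merely $G$-invariant class $[\eta]$. This works because $S$ is finite abelian and $|G|$ is invertible in the algebraically closed field $k$. Concretely, one can pick a bimultiplicative representative $\eta_0$ of $[\eta]$ and define
\[
\bar\eta(s,t)\;:=\;\prod_{g\in G}\eta_0(g^{-1}sg,\,g^{-1}tg)^{1/|G|},
\]
with $|G|$-th roots extracted coherently in the divisible group $k^\times$. Since the bar coboundary on the abelian group $S$ commutes with extraction of $|G|$-th roots, $\bar\eta$ still represents the class $[\eta]$; it is $G$-invariant by construction, and a further normalization arranges $\bar\eta(s,s^{-1})=1$ within its cohomology class. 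Non-degeneracy of $\bar\eta$ is automatic, since the associated symplectic form $\langle\cdot,\cdot\rangle$ depends only on $[\bar\eta]=[\eta]$.

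Feeding this $\bar\eta$ into Lemma \ref{lm_BuenosAires} produces an explicit $\alpha'\in\Z^2(k^G)$ with ${_{\alpha'}}(k^G)\simeq R(S,\bar\eta)\simeq {_\alpha}(k^G)$ as $k^G$-comodule algebras, hence $\alpha\sim\alpha'$ in $\H^2(k^G)$ by Proposition \ref{prop_cleft}. It then remains to check that $\alpha'$ itself is lazy via the criterion of Example \ref{exm_LazyIff}: substituting $u=g^{-1}tg$, $u'=g^{-1}t'g$ in the defining sum and using the pointwise $G$-invariance of both $\bar\eta$ and $\langle\cdot,\cdot\rangle$ yields $\alpha'(e_{gxg^{-1}},e_{gyg^{-1}})=\alpha'(e_x,e_y)$ for $x,y\in S$, while both sides vanish whenever $x$ or $y$ lies outside the normal subgroup $S$. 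This completes the strategy and shows every lazy cocycle on $k^G$ is cohomologous to one of the shape prescribed by Lemma \ref{lm_BuenosAires} with a pointwise $G$-invariant input $\eta$.
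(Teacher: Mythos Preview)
There is a genuine gap in your second step. The averaging construction
\(\bar\eta(s,t)=\prod_{g\in G}\eta_0(g^{-1}sg,g^{-1}tg)^{1/|G|}\)
cannot be made rigorous: there is no group homomorphism \(k^\times\to k^\times\) splitting the \(|G|\)-th power map, so ``coherent extraction of \(|G|\)-th roots'' is ill-defined, and an ad-hoc choice of roots will not yield a cocycle. Worse, the underlying claim---that a \(G\)-invariant class \([\eta]\in\H^2(S,k^\times)\) always lifts to a pointwise \(G\)-invariant cocycle---is false. Take \(S=\ZZ_2\times\ZZ_2\) with a group \(G\) acting by swapping the two basis elements \(a,b\). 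The unique nondegenerate alternating form has \(\langle a,b\rangle=-1\) and is swap-invariant (since \(\langle b,a\rangle=\langle a,b\rangle^{-1}=-1\)), so \([\eta]\) is \(G\)-invariant; but any \(G\)-invariant cocycle would have \(\eta(a,b)=\eta(b,a)\), forcing \(\langle a,b\rangle=1\), a contradiction.

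This is not a counterexample to the corollary because in that situation the Galois object \(R(S,\eta)\) is \emph{not} lazy: the left automorphism group \(G'\) is a different group of the same order (for \(G=D_4\) one gets \(G'\cong Q_8\)), so there is no bicleft \(k^G\)-\(k^G\)-Bigalois structure at all. Your argument extracts from bicleftness only the weak consequence \(|G'|=|G|\) via Lemma~\ref{lm_GPrimeIff}, which is exactly the condition ``\([\eta]\) is \(G\)-invariant''; it never uses that the bicomodule structure is the standard one. It is precisely this stronger input that excludes the obstruction above. The paper's (terse) route goes through the explicit formula of Lemma~\ref{lm_BuenosAires} and the laziness criterion of Example~\ref{exm_LazyIff}: since \(\langle\cdot,\cdot\rangle\) is \(G\)-invariant, the formula shows that the specific representative \(\alpha\) built from \(\eta\) is lazy iff \(\eta\) itself is \(G\)-invariant, and Fourier inversion then lets one read off a \(G\)-invariant \(\eta\) from a \emph{lazy} representative of the class---something your argument does not secure.
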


\begin{corollary}
Let $\alpha\in \Z^2_L(k^G)$ be a $2$-cocycle with the additional property $\alpha(e_g,e_h)=\alpha(e_h,e_g)$. Then $\alpha$ is cohomologically trivial.  
\end{corollary}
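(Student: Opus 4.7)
My approach is to translate the symmetry hypothesis on $\alpha$ into commutativity of the associated twisted algebra, and then invoke the Movshev--Davydov classification (via the preceding corollary) to force the normal abelian subgroup $S$ to be trivial.

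The first step is to unwind the twisted product on ${_\alpha}k^G$. From $\Delta(e_g)=\sum_{ab=g}e_a\otimes e_b$ and $e_a e_b=\delta_{a,b}e_a$, a short calculation gives
\[
e_g \cdot_\alpha e_h \;=\; \sum_{x\in G}\alpha(e_{gx^{-1}},e_{hx^{-1}})\,e_x,
\]
so the assumed identity $\alpha(e_g,e_h)=\alpha(e_h,e_g)$ is precisely equivalent to ${_\alpha}k^G$ being a commutative algebra.

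Next, by the preceding corollary, I may replace $\alpha$ by a cohomologous cocycle $\alpha(S,\eta)$ of the shape provided by Lemma \ref{lm_BuenosAires}, for a normal abelian subgroup $S\subseteq G$ and a $G$-invariant non-degenerate $\eta\in\Z^2(S)$. Cohomologous cocycles yield isomorphic $k^G$-comodule algebras, so the Galois object ${_{\alpha(S,\eta)}}k^G\cong R(S,\eta)$ inherits commutativity. But the algebra $R(S,\eta)$ is the pointwise product of functions on $G$ with values in the twisted group algebra $k_\eta S$, hence commutative if and only if $k_\eta S$ is commutative, which for abelian $S$ amounts exactly to $\eta$ being a symmetric cocycle.

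A symmetric $2$-cocycle on an abelian group has trivial commutator pairing $\langle t,t'\rangle=\eta(t,t')\eta(t',t)^{-1}$, whereas non-degeneracy of $\eta$ (built into Lemma \ref{lm_DavydovClassification}) requires this pairing to be a non-degenerate alternating bicharacter on $S$. Hence $S=\{1\}$, and in this case the formula of Lemma \ref{lm_BuenosAires} collapses to $\alpha(f,f')=\epsilon(f)\epsilon(f')=\d\epsilon(f,f')$, so $\alpha$ is a coboundary. The only delicate point I anticipate is checking that commutativity of ${_\alpha}k^G$ really does transfer to ${_{\alpha(S,\eta)}}k^G$ through the coboundary correction of the preceding corollary; this is safe because any $\gamma\in\Reg^1(k^G)$ yields a $k^G$-comodule algebra isomorphism ${_\alpha}k^G\cong {_{\alpha\cdot\d\gamma}}k^G$, but one should be mindful of whether the representative is produced in $\H^2_L(k^G)$ or in $\H^2(k^G)$ --- the latter level of precision is all that is needed for the stated conclusion.
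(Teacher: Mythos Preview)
Your proof is correct. It follows the same overall skeleton as the paper --- reduce to the $(S,\eta)$ classification and deduce that $\eta$ is symmetric, hence the non-degenerate pairing forces $S=\{1\}$ --- but it transports the symmetry hypothesis by a different mechanism.

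The paper's written proof stays entirely at the cocycle level: it first notes that coboundaries on the commutative algebra $k^G$ satisfy $\d\nu(e_x,e_y)=\d\nu(e_y,e_x)$, so symmetry of $\alpha$ passes to any cohomologous representative, in particular to the standard representative $\alpha(S,\eta)$ of Lemma~\ref{lm_BuenosAires}; it then exhibits the explicit inverse Fourier formula $\eta(s,s')=\sum_{x,y\in S}\alpha(e_x,e_y)\langle x,s\rangle\langle y,s'\rangle$ to read off directly that $\eta$ is symmetric. Your argument instead encodes symmetry of $\alpha$ as commutativity of ${}_\alpha k^G$, passes it across the comodule-algebra isomorphism ${}_\alpha k^G\cong {}_{\alpha(S,\eta)}k^G\cong R(S,\eta)$, and then observes that commutativity of $R(S,\eta)$ forces $k_\eta S$ (and hence $\eta$) to be symmetric. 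This is more conceptual and bypasses the explicit inversion formula; in fact the paper mentions exactly this alternative in its closing remark after the proof (that ${}_\alpha k^G$ is commutative and a commutative Galois object is trivial), and your argument makes that remark precise by running it through the Movshev--Davydov model. Your caution about whether the cohomology equivalence in the preceding corollary lives in $\H^2_L$ or merely $\H^2$ is well placed but harmless: either kind of coboundary induces a comodule-algebra isomorphism, so commutativity transfers regardless.
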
	
\begin{proof}
Since $k^G$ is commutative, we have $\d\nu(e_x,e_y)=\nu(e_{x_1})\nu(e_{y_1})\nu^{-1}(e_{x_2}e_{y_y}) = \d\nu(e_y,e_x)$, thus every $2$-cocycle $\alpha'$ in the same cohomology class as $\alpha$ is also symmetric, hence $\alpha'(e_g,e_h)=\alpha'(e_h,e_g)$. By Lemma \ref{lm_BuenosAires} every cohomology class has a representative induced by an $\eta \in \Z^2(S)$ for a normal abelian subgroup $S$. One checks that such an $\eta$ is given by 
$$\eta(s,s') = \sum_{x,y \in S} \alpha(e_x,e_y) \langle x,s \rangle \langle y,s' \rangle $$ 
which implies that $\eta$ is symmetric on $S$. Since $S$ is abelian $\eta$ is cohomologically trivial. 
Since any lazy $2$-cocycle is up to cohomology the restriction of a $2$-cocycle $\eta$ on a normal abelian subgroup $S\subset G$ the symmetry condition implies that $\eta$ is also symmetric, hence $\langle x,s \rangle = 1$ for all $x,s \in S$. But we know from Lemma \ref{lm_DavydovClassification} that $\eta$ is non-degenerate hence $\langle x,s \rangle = 1$ for all $x$ implies $s=1$. This implies that the abelian subgroup $S$ is trivial and hence $\alpha$ is cohomologically trivial.   
\end{proof}

Note that this can also be seen from the fact that the Galois object $_\alpha(k^G)$ is a commutative algebra, since $k^G$ commutative and $\alpha$ symmetric. Such a Galois object is then trivial up to isomorphism.  

\section{A Bruhat-like decomposition of $\Aut_{Hopf}(\DG)$}\label{sec_cell}

\noindent

In this section we give a description of $\Aut_{Hopf}(\DG)$. More precisely, we determine in Theorem \ref{thm_cell} a decomposition of $\Aut_{Hopf}(\DG)$ into double cosets similar to the Bruhat-decomposition of a Lie group. It $G$ is not elementary abelian the reflections that we need for our \emph{double} coset decomposition are twisted, in particular they do not square to the identity. However it is possible to give a coset decomposition based on non-twisted reflections.

Our results in this section rely on the approach \cite{ABM} Corollary 3.3 and on the works of Keilberg \cite{Keil}. He has determined a product decomposition (exact factorization) of $\Aut_{Hopf}(\DG)$ whenever $G$ does not contain abelian direct factors. In \cite{KS14} Keilberg and Schauenburg determined $\Aut_{Hopf}(\DG)$ in the general case, hence when $G$ is allowed to have abelian direct factors using an approach that differs from ours.   

\medskip  

\begin{proposition}(\cite{Keil} 1.1, 1.2)\\
The underlying set of $\Aut_{Hopf}(\DG)$ is in bijection with the set of invertible matrices $\begin{pmatrix} u & b \\ a & v \end{pmatrix}$ where 
\begin{equation*} 
\begin{split} 
&u:k^G \rightarrow k^G    \hspace{1cm} \text{is a Hopf algebra morphism    } \\
&b:G \rightarrow \hat{G}  \hspace{1cm} \text{is a group homomorphism   } \\
&a:k^G \rightarrow kG     \hspace{1cm} \text{is a Hopf algebra morphism    } \\ 
&v:G \rightarrow G        \hspace{1cm} \text{is a group homomorphism   } 
\end{split}
\end{equation*}  
fulfilling the following three additional conditions for all $f \in k^G$ and $g\in G$: 
\begin{align}
u(f_1) \times a(f_2)=u(f_2) \times a(f_1) \quad\;
v(g) \triangleright u(f) = u(g \triangleright f) \quad\;
v(g)a(f) = a(g \triangleright f)v(g)
\label{eqn:threepr}
\end{align}
This bijection maps such a matrix to an automorphism $\phi \in \Aut_{Hopf}(\DG)$ defined by: 
\begin{equation*} 
\phi(f \times g) = u(f_1)b(g) \times a(f_2)v(g) \hspace{1cm} \forall f \in k^G \hspace{0.3cm} \forall g \in G 
\end{equation*}   
We equip this set of matrices with matrix multiplication where we use convolution to add and composition to multiply then the above bijection is a group homomorphism. 
\label{propone}
\end{proposition}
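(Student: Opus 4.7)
The plan is to apply the general classification of Hopf algebra endomorphisms of bicrossed products from \cite{ABM} Corollary 3.3 to the specific case $\DG = k^G \bowtie kG$. Here $\DG$ is a bicrossed product with $k^G$ and $kG$ embedded as Hopf subalgebras and the multiplication $m : k^G \otimes kG \to \DG$ a linear isomorphism, so every Hopf automorphism is determined by its restrictions to the two factors. The four component maps encode these restrictions, and the three compatibility conditions translate the algebra/coalgebra requirements into the matched pair language.

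First I would construct the components of a given $\phi \in \Aut_{Hopf}(\DG)$. Using $k^G \hookrightarrow \DG$ together with the vector space decomposition $\DG = k^G \otimes kG$, write $\phi(f \times 1)$ uniquely in the form $\sum u(f_1) \times a(f_2)$ for linear maps $u : k^G \to k^G$ and $a : k^G \to kG$. Since $\Delta_{\DG}(f \times 1) = (f_1 \times 1) \otimes (f_2 \times 1)$, the element $\phi(f \times 1)$ sits inside a subcoalgebra of $\DG$ isomorphic to $k^G$; coassociativity forces the Sweedler-form decomposition and makes $u,a$ coalgebra morphisms, while algebra morphicity of $\phi|_{k^G \times 1}$ makes them algebra morphisms, hence Hopf morphisms. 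On the $kG$ side, each $1 \times g$ is grouplike, and the grouplikes of $\DG$ are exactly $\hat{G} \times G$, so $\phi(1 \times g) = b(g) \times v(g)$ with $b : G \to \hat{G}$ and $v : G \to G$ group homomorphisms. Multiplicativity of $\phi$ applied to $(f \times 1)(1 \times g) = f \times g$ then yields the global formula for $\phi$.

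Next I would derive the three compatibility relations by translating multiplicativity of $\phi$ across the cross relations of $\DG$. Expanding $\phi((1 \times g)(f \times 1))$ via the Drinfeld double relation $(1 \times g)(f \times 1) = (g \triangleright f) \times g$ and comparing with the product $\phi(1 \times g) \phi(f \times 1)$ computed via the component formula, I would read off the two equivariance conditions $v(g) \triangleright u(f) = u(g \triangleright f)$ and $v(g) a(f) = a(g \triangleright f) v(g)$ by projecting to the $k^G$-part and $kG$-part respectively. The remaining condition $u(f_1) \times a(f_2) = u(f_2) \times a(f_1)$ arises from the requirement that the element $\phi(f \times 1)$ actually lies in the coalgebra $\DG$: it expresses that the two Sweedler-parameters are independent of the ordering conventions in the bicrossed product, equivalently it can be extracted from comparing $\phi(f \times 1)\phi(f' \times 1)$ with $\phi(ff' \times 1)$ and using the straightening rule to bring both sides to the standard $k^G \otimes kG$ form.

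For the converse, given a tuple $(u,a,b,v)$ satisfying the three conditions, define $\phi$ by the stated formula and verify the Hopf algebra axioms; coproduct preservation is essentially automatic from the Sweedler-form components, while product preservation reduces precisely to the three compatibility conditions. For the group structure, I would compute $\phi' \circ \phi$ and read off its components, obtaining e.g. that its upper-left entry is $(u' \circ u) * (b' \circ a)$, i.e.\ the convolution of composites, which is exactly the $(1,1)$-entry of the matrix product. The main technical obstacle is tracking the bicrossed product straightening rules carefully enough to extract the first compatibility condition and to obtain the convolution-composition product structure on matrices; the remaining verifications are bookkeeping given the framework of \cite{ABM}.
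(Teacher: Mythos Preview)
The paper does not supply its own proof of this proposition: it is stated with the attribution ``(\cite{Keil} 1.1, 1.2)'' and no proof environment follows. The authors simply quote Keilberg's result, remarking elsewhere that the underlying mechanism is the classification of Hopf morphisms of bicrossed products in \cite{ABM} Corollary~3.3. Your proposal is therefore not competing with a proof in the paper but rather reconstructing the argument behind the cited references, and in that respect it is on the right track and matches the framework the paper invokes.

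Two small points of imprecision are worth flagging. First, the sentence ``the element $\phi(f\times 1)$ sits inside a subcoalgebra of $\DG$ isomorphic to $k^G$'' is not what you actually use; $\phi(f\times 1)$ lands in all of $\DG$, and what you need is rather that the two projections $k^G\otimes kG\to k^G$ and $k^G\otimes kG\to kG$ are coalgebra maps, so that composing $\phi|_{k^G}$ with them yields the coalgebra morphisms $u$ and $a$. Second, your derivation of the first compatibility condition $u(f_1)\times a(f_2)=u(f_2)\times a(f_1)$ is vague: it is not really an ``ordering convention'' issue, nor does it drop out of multiplicativity on $k^G\times 1$ alone. In Keilberg's treatment it comes from combining the coalgebra-morphism property of $\phi|_{k^G}$ with the specific form of the $\DG$-coproduct on a general element $p\times q$, which forces the symmetry between the two Sweedler legs. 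This is the one step that needs a genuine computation rather than bookkeeping; the rest of your outline is accurate.
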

\begin{example} For $G=A$ a finite abelian group we get an isomorphism $$\Aut_{Hopf}(DA) \simeq \Aut(\hat{A} \times A)$$ where all maps $u,b,a,v$ are group homomorphisms. \\
\end{example}

\noindent
In the following we will often express the maps $u,a,b$ in terms of a canonical basis in the following way    
\begin{equation*}
\begin{split}
&u(e_g) = \sum_{h \in G} u(e_g)(h) e_h \hspace{1cm} b(g) = \sum_{h \in G} b(g)(h) e_h \\
&a(e_g) = \sum_{h \in G} e_h (a (e_g)) h =: \sum_{h \in G} a^h_g h
\end{split}
\end{equation*}
We denote by e.g. $u^*:kG \rightarrow kG$ the dual map of $u:k^G \to k^G$, hence $e_h(u^*(g)) = u(e_h)(g)$ and similarly for $a,b,v$.  
An automorphism $\phi \in \Aut_{Hopf}(\DG)$ can then be given in the following way: 
\[ \phi(e_g\times h) = \sum_{x,y \in G} b(h)(x) \hspace{0.05 cm} a_{gu^*(x)^{-1}}^{v(h)^{-1}y} \hspace{0.05 cm} (e_x \times y)  \]

In the matrix notation $\left(\begin{smallmatrix} u & b \\ a & v \end{smallmatrix}\right)$ we use the following conventions: 
\begin{itemize}
\item $u \equiv 0$ denotes the map $k^G \rightarrow k^G; e_g \mapsto (h \mapsto \delta_{g,1_G})$ and $u \equiv 1$ the identity map on $k^G$.
\item $b \equiv 0$ denotes the map $kG \rightarrow k^G;g \mapsto 1_{k^G}$ 
\item $a \equiv 0$ denotes the map $k^G \rightarrow kG;e_g \mapsto 1_G \delta_{g,1_G}$
\item $v \equiv 0$ denotes the map $G \rightarrow G; g \mapsto 1_G$ and $v \equiv 1$ the identity map on $G$. \\  
\end{itemize} 
 
\medskip 

\begin{lemma}(\cite{Keil} Sect. 2)~\\ Let $\left(\begin{smallmatrix} u & b \\ a & v \end{smallmatrix}\right)\in \Aut_{Hopf}(\DG)$. Then the following holds: 
\begin{itemize}
\item The Hopf morphism $a$ is uniquely determined by a group isomorphism $\hat{A} \cong B$ where $A,B$ are abelian subgroup of $G$ such that $\im(a^*)=kA$ and $\im(a)=kB$. The map $a$ is then given by composing $ k^G \hookrightarrow k^A \cong k\hat{A} \cong kB \hookrightarrow kG $.
\item $A,B \leq Z(G)$, $G=Z(G)\im(v)$, $\im(a)\im(v) = \im(v)\im(a)$.   
\item $u^* \circ v$ is a normal group homomorphism. 
\item The kernels of $v$ and $u^*$ are contained in an abelian direct factor of $G$. 
\end{itemize}
\label{lm_KeilbergTechnical}
\end{lemma}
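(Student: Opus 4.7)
The plan is to prove the four bullets in turn, with (i) being a structural identification, (ii)--(iii) derived from the compatibility conditions (\ref{eqn:threepr}), and (iv) combining both with invertibility of $\phi$.

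\emph{Bullet (i).} I will start from the fact that Hopf subalgebras of $kG$ are of the form $kB$ for $B\leq G$, so $\im(a)=kB$. Commutativity of $k^G$ forces the Hopf quotient $kB$ to be commutative, making $B$ abelian. Via the Fourier isomorphism $kB\cong k^{\hat B}$, the surjection $a:k^G\twoheadrightarrow k^{\hat B}$ is dual to an injective group homomorphism $\hat B\hookrightarrow G$ whose image is an abelian subgroup $A$. This yields the factorization $k^G\twoheadrightarrow k^A\cong k\hat A\cong kB\hookrightarrow kG$; dualizing gives $\im(a^*)=kA$.

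\emph{Bullet (ii).} For $A,B\leq Z(G)$, I will apply condition (3) of (\ref{eqn:threepr}) to $f=e_x$: $v(g)\,a(e_x)=a(e_{gxg^{-1}})\,v(g)$. Vanishing of $a$ off $A$ forces $A\trianglelefteq G$; for $x\in A$, rearranging to $v(g)\,a(e_x)\,v(g)^{-1}=a(e_{gxg^{-1}})\in kB$ implies $\im(v)$ normalizes $B$, and linear independence of the Fourier basis $\{a(e_x)\}_{x\in A}$ in $kB$ forces the induced permutation of $B$ to be trivial, placing $\im(v)\subseteq C_G(B)$. A symmetric argument via $a^*$ centralizes $A$. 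For $G=Z(G)\,\im(v)$ I will track the group-likes $\hat G\times G$ of $\DG$: invertibility of $\phi$ together with $\phi(1\times g)=b(g)\times v(g)$ and $\phi(\chi\times 1)=u(\chi)\times a(\chi)$ shows that the $G$-components of the image are generated by $B\cdot\im(v)$, and centrality of $B$ upgrades this to $G=Z(G)\,\im(v)$. Finally, $\im(a)\,\im(v)=\im(v)\,\im(a)$ is an immediate rewriting of condition (3) using these centralities.

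\emph{Bullets (iii) and (iv).} For (iii), I will unwind condition (2) dually to $u^*(v(g)^{-1}\,h\,v(g))=g^{-1}\,u^*(h)\,g$, equivalently $u^*(v(g))\,g^{-1}\in C_G(\im(u^*))$. Combined with the symmetric statement $G=Z(G)\,\im(u^*)$, this shows $u^*\circ v$ differs from $\id_G$ only by a central twist and hence has normal image. For (iv), substituting $g\in\ker(v)$ into condition (2) forces $g\in C_G(\im(u^*))$; combined with $G=Z(G)\,\im(u^*)$, this gives $\ker(v)\leq Z(G)$, and symmetrically $\ker(u^*)\leq Z(G)$. To place both kernels inside a common abelian direct factor, I will use invertibility of $\phi$ as a matrix: the splittings $G=Z(G)\,\im(v)=Z(G)\,\im(u^*)$ refine to direct product decompositions, and matching these via $\phi$ produces a common central abelian complement $C$ with $G=G'\times C$ containing both $\ker(v)$ and $\ker(u^*)$.

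\emph{Principal obstacle.} The hard part will be establishing the decompositions $G=Z(G)\,\im(v)$ and $G=Z(G)\,\im(u^*)$ and promoting them to the direct-product decomposition in (iv); these require combining the Fourier-type identifications from (i) with the invertibility of the matrix $\phi$ on the group-like subspaces of $\DG$. Once these structural statements are in place, the centrality, normality, and equivariance assertions reduce to direct rewritings of (\ref{eqn:threepr}).
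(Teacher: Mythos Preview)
The paper does not contain a proof of this lemma; it is quoted from \cite{Keil}, Section~2, and stated without argument. There is therefore nothing in the present paper to compare your proposal against. If you want to check your outline, the reference is Keilberg's paper itself.

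That said, as a sketch your proposal has the right shape but two of the steps are not yet arguments. In bullet~(ii), the assertion that ``linear independence of the Fourier basis $\{a(e_x)\}_{x\in A}$ forces the induced permutation of $B$ to be trivial'' does not follow: the relation $v(g)\,a(e_x)\,v(g)^{-1}=a(e_{gxg^{-1}})$ only intertwines the $\im(v)$-conjugation action on $B$ with the $G$-conjugation action on $A$ via the isomorphism $\hat A\cong B$; it does not by itself force either action to be trivial. Getting $A,B\leq Z(G)$ in \cite{Keil} uses more of the invertibility of $\phi$ (in particular the structure of the inverse matrix and the interplay with $u$). In bullet~(iv), the sentence ``the splittings $G=Z(G)\,\im(v)=Z(G)\,\im(u^*)$ refine to direct product decompositions'' is exactly the nontrivial content; a product decomposition $G=Z(G)H$ does not in general refine to a direct factorisation, and Keilberg's argument for this point is more delicate than a one-line matching via $\phi$. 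Bullets~(i) and~(iii) are essentially correct as written.
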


\noindent
We now introduce several important subgroups of $\Aut_{Hopf}(DG)$. The first subgroup illustrates how a group automorphism of $G$ induces an Hopf automorphism of $\DG$. \\

\begin{proposition}(\cite{Keil} 4.3) \\ 
There is a natural subgroup of $\Aut_{Hopf}(\DG)$ given by:  
\begin{equation*} 
V := \left\{ \begin{pmatrix} (v^{-1})^* & 0 \\ 0 & v \end{pmatrix} \mid v \in \Aut(G) \right\}
\end{equation*}
An element in $V$ corresponds to the following automorphism of $\DG$: 
\[ e_g \times h \mapsto e_{v(g)}\times v(h) \] 
We obviously have an isomorphism of groups: $V \simeq \Aut(G)$. 
\label{auto}
\end{proposition}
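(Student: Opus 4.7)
The plan is to verify that the claimed matrices lie in $\Aut_{Hopf}(\DG)$ via Proposition \ref{propone}, then check the resulting formula, and finally identify the subgroup structure.

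First I would show that, for a fixed $v\in\Aut(G)$, the data
$$u:=(v^{-1})^*:k^G\to k^G,\quad a\equiv 0,\quad b\equiv 0,\quad v:G\to G$$
define entries of the correct type. The map $u$ is a Hopf algebra isomorphism since it is the transpose of the group isomorphism $v^{-1}$; explicitly $u(e_g)=e_{v(g)}$. The maps $a\equiv 0$ and $b\equiv 0$ are Hopf/group morphisms by the conventions just fixed, and $v$ is a group homomorphism by hypothesis.

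Next I would check the three compatibility conditions in (\ref{eqn:threepr}):
\begin{itemize}
\item Since $a(f_2)=\epsilon(f_2)1_G$, both sides of $u(f_1)\times a(f_2)=u(f_2)\times a(f_1)$ collapse via the counit to $u(f)\times 1_G$.
\item For $v(g)\triangleright u(f)=u(g\triangleright f)$ it suffices to test on $f=e_x$; here $\triangleright$ is the conjugation action on $k^G$, so both sides equal $e_{v(gxg^{-1})}$, using that $v$ is a group homomorphism.
\item Since $a(f)=\epsilon(f)1_G$ and conjugation preserves the counit on $k^G$ (as $\epsilon(e_x)=\delta_{x,1}$ is conjugation invariant), the equation $v(g)a(f)=a(g\triangleright f)v(g)$ reduces to $\epsilon(f)v(g)=\epsilon(f)v(g)$.
\end{itemize}
Then, invertibility of the matrix is immediate: its inverse is the matrix of the same shape associated to $v^{-1}\in\Aut(G)$.

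Now I would compute the induced automorphism via the formula of Proposition \ref{propone}. Using $\Delta(e_g)=\sum_{g_1g_2=g}e_{g_1}\otimes e_{g_2}$, the fact that $a(e_{g_2})=\delta_{g_2,1_G}1_G$ forces $g_2=1_G$, while $b(h)=1_{k^G}$ acts trivially in the first tensor factor. Hence
$$\phi(e_g\times h)=u(e_g)\cdot 1_{k^G}\times 1_G\cdot v(h)=e_{v(g)}\times v(h),$$
matching the claim.

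Finally I would verify that $v\mapsto \left(\begin{smallmatrix}(v^{-1})^* & 0\\ 0 & v\end{smallmatrix}\right)$ is an injective group homomorphism. Injectivity is clear from the bottom-right entry. For the homomorphism property, matrix multiplication gives
$$\begin{pmatrix}(v_1^{-1})^* & 0\\ 0 & v_1\end{pmatrix}\begin{pmatrix}(v_2^{-1})^* & 0\\ 0 & v_2\end{pmatrix}=\begin{pmatrix}(v_2^{-1})^*\circ(v_1^{-1})^* & 0\\ 0 & v_1\circ v_2\end{pmatrix}=\begin{pmatrix}((v_1v_2)^{-1})^* & 0\\ 0 & v_1v_2\end{pmatrix},$$
using $(v_1^{-1})^*\circ(v_2^{-1})^*$ written in the convolution/composition convention of Proposition \ref{propone} becomes $(v_2^{-1}\circ v_1^{-1})^*=((v_1v_2)^{-1})^*$. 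Consequently $V$ is a subgroup isomorphic to $\Aut(G)$. No single step is a real obstacle here; the only mild care is to keep track of the dualization conventions (hence the inverse in $(v^{-1})^*$), which is dictated by requiring that the induced map sends $e_g\times h$ to $e_{v(g)}\times v(h)$ rather than $e_{v^{-1}(g)}\times v(h)$.
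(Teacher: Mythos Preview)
Your verification is correct and is exactly the natural approach: check the three compatibility conditions in Proposition~\ref{propone}, compute the induced map on basis elements, and verify the group homomorphism property. The paper itself gives no proof of this proposition; it is simply quoted from \cite{Keil}~4.3, so there is nothing to compare against beyond noting that your argument is the straightforward verification one would expect.

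One small remark on notation: in your final matrix computation the upper-left entry of the product should read $(v_1^{-1})^*\circ(v_2^{-1})^*$ under the standard convention (first matrix acts last), not $(v_2^{-1})^*\circ(v_1^{-1})^*$; you essentially correct yourself in the next line, and since $(v_1^{-1})^*\circ(v_2^{-1})^*(e_g)=e_{v_1v_2(g)}=((v_1v_2)^{-1})^*(e_g)$ the conclusion is right either way.
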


\medskip

Then we have two group of 'strict upper triangular matrices' and 'strict lower triangular matrices' which come from the abelianization $G_{ab}=G/[G,G]$ and the center $Z(G)$ respectively.  

\medskip

\begin{proposition}(\cite{Cour12} 2.3.5) \\
There is a natural abelian subgroup of $\Aut_{Hopf}(\DG)$ given by: 
\begin{equation*} 
B := \left\{ \begin{pmatrix} 1 & b \\ 0 & 1 \end{pmatrix} \mid b \in \Hom(G_{ab},\widehat{G}_{ab}) \right\}
\end{equation*} 
An element in $B$ corresponds to the following isomorphism of $\DG$: 
\[e_g\times h \mapsto b(h)(g)\;e_g \times h \] 
We have an isomorphism of groups $B \cong \Hom(G_{ab},\widehat{G}_{ab})\cong \widehat{G}_{ab}\otimes \widehat{G}_{ab}$ where the $\Hom$-space is equipped with the convolution product and the tensor product is over $\ZZ$.
\end{proposition}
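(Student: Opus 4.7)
The plan is to deduce this proposition directly from the matrix presentation of $\Aut_{Hopf}(\DG)$ in Proposition \ref{propone}. I would first fix $u = \id_{k^G}$, $v = \id_G$, $a \equiv 0$, and $b \colon G \to \hat{G}$ an arbitrary group homomorphism, and verify that the corresponding matrix lies in $\Aut_{Hopf}(\DG)$ by checking the three compatibility conditions in equation \eqref{eqn:threepr}. With $u$ the identity and $a(f) = f(1_G)\,1_G$, the first condition $u(f_1)\otimes a(f_2) = u(f_2)\otimes a(f_1)$ reduces to $f\otimes 1_G = f\otimes 1_G$; the second, $v(g)\triangleright u(f) = u(g\triangleright f)$, becomes a tautology; and the third, $v(g)a(f) = a(g\triangleright f)v(g)$, simplifies to $f(1_G)\,g = (g\triangleright f)(1_G)\,g$, which holds because $(g\triangleright f)(1_G) = f(g^{-1}\cdot 1_G\cdot g) = f(1_G)$. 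Notably, these checks work for \emph{any} group homomorphism $b\colon G\to\hat{G}$, so no constraint on $b$ beyond being a homomorphism is imposed.

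Next, I would compute the explicit action. Using the master formula $\phi(f\times g) = u(f_1)b(g)\times a(f_2)v(g)$ with $\Delta(e_g)=\sum_{xy=g}e_x\otimes e_y$, the term $a(e_y)$ equals $\delta_{y,1_G}\,1_G$, collapsing the sum to $y=1_G$ and leaving $\phi(e_g\times h) = e_g\cdot b(h)\times h$. Since $b(h)\in\hat{G}$ acts on the basis of $k^G$ by the character evaluation $e_g\cdot b(h) = b(h)(g)\,e_g$, this yields precisely the stated formula $e_g\times h\mapsto b(h)(g)\,e_g\times h$.

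Then I would verify $B$ is an abelian subgroup by direct composition: for two such maps $\phi_{b}$ and $\phi_{b'}$,
\[
(\phi_{b}\circ\phi_{b'})(e_g\times h) = \phi_{b}\bigl(b'(h)(g)\,e_g\times h\bigr) = b(h)(g)\,b'(h)(g)\,e_g\times h,
\]
showing composition corresponds to the pointwise product of the bicharacters $G\times G\to k^\times$, which is the convolution product on $\Hom(G,\hat{G})$ and is manifestly commutative. Invertibility in $B$ follows because each $b(h)\in\hat{G}$ takes values in $k^\times$, so $b^{-1}(h)(g):=b(h)(g)^{-1}$ defines the inverse homomorphism.

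Finally, I would identify the group of admissible $b$'s. Since $\hat{G}$ is abelian, any homomorphism $G\to\hat{G}$ factors uniquely through $G_{ab}$, and characters are always trivial on commutators, so $\hat{G} = \widehat{G_{ab}} = \widehat{G}_{ab}$; hence $B\cong \Hom(G_{ab},\widehat{G}_{ab})$. The identification $\Hom(G_{ab},\widehat{G}_{ab})\cong \widehat{G}_{ab}\otimes_{\ZZ}\widehat{G}_{ab}$ then follows from the standard fact that for finite abelian groups $A,C$ one has $\Hom(A,C)\cong \hat{A}\otimes_{\ZZ} C$ (checked on cyclic factors $\ZZ/n$, where both sides equal $C[n]\cong\ZZ/n\otimes C$), combined with the non-canonical duality $\widehat{G_{ab}}\cong G_{ab}$ provided by algebraic closure of $k$ in characteristic zero. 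I expect no serious obstacle — the only mild subtlety is being careful that the matrix multiplication convention in Proposition \ref{propone} (compose-then-convolve) really reproduces the composition of Hopf automorphisms, which here reduces to the trivial unipotent block computation.
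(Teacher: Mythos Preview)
Your proof is correct and complete. Note, however, that the paper itself does \emph{not} supply a proof of this proposition: it is stated with a citation to \cite{Cour12}, Proposition 2.3.5, and no argument is given. Your write-up therefore cannot be compared to the paper's own reasoning, but it does exactly what is needed: it specializes Proposition~\ref{propone} to $u=\id$, $v=\id$, $a\equiv 0$, verifies the three compatibility conditions \eqref{eqn:threepr} trivially, reads off the explicit formula, checks that composition corresponds to convolution of bicharacters (hence abelian), and finally identifies $\Hom(G,\hat{G})\cong\Hom(G_{ab},\widehat{G}_{ab})\cong \widehat{G}_{ab}\otimes_{\ZZ}\widehat{G}_{ab}$. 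One small remark: the last isomorphism is only non-canonical (as you implicitly acknowledge by invoking $\widehat{G_{ab}}\cong G_{ab}$), which is consistent with how the paper uses it.
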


\medskip

\begin{proposition}(\cite{Keil} 4.1, 4.2) \\
There is natural abelian subgroup of $\Aut_{Hopf}(\DG)$ given by: 
\begin{equation*} 
E := \left\{ \begin{pmatrix} 1 & 0 \\ a & 1 \end{pmatrix} \mid a \in \Hom(\widehat{Z(G)},Z(G)) \right\}
\end{equation*}
An element in $E$ corresponds to the following isomorphism of $\DG$: 
\[ e_g\times h \mapsto \sum_{g_1g_2 =g}e_{g_1} \times a(e_{g_2})h \] 
We have an isomorphism $E \cong \Hom(\widehat{Z(G)},Z(G))\cong Z(G)\otimes Z(G)$ where the $\Hom$-space is equipped with the convolution product and the tensor product is over $\ZZ$.
\end{proposition}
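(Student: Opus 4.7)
The plan is to construct the bijection with $\Hom(\widehat{Z(G)},Z(G))$ explicitly and deduce the remaining claims from it, relying on Proposition \ref{propone} and Lemma \ref{lm_KeilbergTechnical}.

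First, given $\psi\in\Hom(\widehat{Z(G)},Z(G))$, I would produce the lower-left entry $a:k^G\to kG$ as the composition
\[
k^G\twoheadrightarrow k^{Z(G)}\cong k\widehat{Z(G)}\xrightarrow{\psi} kZ(G)\hookrightarrow kG,
\]
where the middle Fourier isomorphism is canonical and the outer maps are the Hopf surjection dual to $kZ(G)\hookrightarrow kG$ and the central inclusion itself. This is visibly a Hopf morphism, and conversely Lemma \ref{lm_KeilbergTechnical} implies that every Hopf morphism $a:k^G\to kG$ factors in this way, yielding a bijection between the allowed $a$'s and $\Hom(\widehat{Z(G)},Z(G))$. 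Next I would check the three compatibility conditions of Proposition \ref{propone} with $u=v=\id$ and $b=0$: the condition $v(g)\triangleright u(f)=u(g\triangleright f)$ is tautological; the condition $v(g)a(f)=a(g\triangleright f)v(g)$ holds since $\im(a)\subseteq kZ(G)$ commutes with $g$ while the $G$-action $g\triangleright e_x=e_{gxg^{-1}}$ either fixes $x$ (when $x\in Z(G)$) or leaves $a$ zero (when $x\notin Z(G)$); and the condition $u(f_1)\otimes a(f_2)=u(f_2)\otimes a(f_1)$ expands on $f=e_x$ to
\[
\sum_{x_1x_2=x}e_{x_1}\otimes a(e_{x_2})=\sum_{x_1x_2=x}e_{x_2}\otimes a(e_{x_1}),
\]
which, after restricting to the nonzero summands (requiring $x_2$, respectively $x_1$, to lie in the central subgroup supporting $a$), is a relabeling using $x_1x_2=x_2x_1$ whenever one factor is central. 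Invertibility of the matrix follows because the convolution inverse of $a$ is $a\circ S_{k^G}$, again of the same type. The explicit formula for $\phi(e_g\times h)$ is then a direct substitution into $\phi(f\times g)=u(f_1)b(g)\times a(f_2)v(g)$.

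Second, to see that $E$ is a subgroup and abelian, I would compute the matrix product in the conventions of Proposition \ref{propone} (sums are convolution, juxtaposition is composition). The off-diagonal blocks collapse to the convolution unit (composition with $b=0$ produces $\epsilon\cdot 1_{k^G}$), so the product of two elements of $E$ has again the form $\begin{pmatrix}1&0\\a_1*a_2&1\end{pmatrix}$. Writing $a_i=i\circ\tilde a_i\circ r$ with $r,i$ as above, the coalgebra property of $r$ and the algebra property of $i$ yield $a_1*a_2=i\circ(\tilde a_1*\tilde a_2)\circ r$, so the convolution is of the required type. Under the bijection with $\Hom(\widehat{Z(G)},Z(G))$, convolution of Hopf morphisms between the commutative cocommutative group algebras $k\widehat{Z(G)}$ and $kZ(G)$ corresponds to pointwise multiplication of the associated group homomorphisms $\widehat{Z(G)}\to Z(G)$, which is commutative; this simultaneously establishes the group structure, the abelianness of $E$, and the isomorphism $E\cong\Hom(\widehat{Z(G)},Z(G))$.

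The final identification $\Hom(\widehat{Z(G)},Z(G))\cong Z(G)\otimes_\ZZ Z(G)$ is the standard isomorphism $\Hom(\widehat{A},A)\cong A\otimes_\ZZ A$ for any finite abelian group $A$, which follows from the structure theorem by reduction to cyclic factors. The main obstacle will be the careful verification of the first condition in Proposition \ref{propone}: although all three compatibility conditions ultimately derive from the centrality of $\im(a)$ and $\im(a^*)$, this one is the least automatic because $k^G$ is not cocommutative, and it is precisely here that the containment $A\subseteq Z(G)$ is essential.
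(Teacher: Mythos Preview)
The paper does not give its own proof of this proposition; it is simply cited from \cite{Keil} 4.1, 4.2. Your argument is correct and is essentially the standard verification: construct $a$ from $\psi\in\Hom(\widehat{Z(G)},Z(G))$ via the Fourier factorization, check the three compatibility conditions of Proposition \ref{propone} using that $a(e_x)=0$ for $x\notin Z(G)$ and $\im(a)\subseteq kZ(G)$, and observe that convolution of such $a$'s corresponds to pointwise product of the $\psi$'s since the source $k\widehat{Z(G)}$ is cocommutative and the target $kZ(G)$ is commutative.

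One minor remark: the sentence ``conversely Lemma \ref{lm_KeilbergTechnical} implies that every Hopf morphism $a:k^G\to kG$ factors in this way'' is both unnecessary and slightly imprecise. Lemma \ref{lm_KeilbergTechnical} is stated only for entries $a$ of an \emph{automorphism} of $\DG$, not for arbitrary Hopf morphisms $k^G\to kG$; and in any case you do not need a converse here, since $E$ is \emph{defined} as the image of $\Hom(\widehat{Z(G)},Z(G))$ under your construction. The bijection $E\cong\Hom(\widehat{Z(G)},Z(G))$ then follows once you know the assignment $\psi\mapsto a$ is injective (clear) and lands in $\Aut_{Hopf}(\DG)$ (which you verify). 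Dropping that sentence would streamline the argument without loss.
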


\medskip

\begin{proposition}(\cite{Keil} 4.5) \\ 
There is a subgroup of $\Aut_{Hopf}(\DG)$ given by: 
\begin{equation*}
V_c:= \left\{ \begin{pmatrix} (v^{-1})^* & 0 \\ 0 & 1 \end{pmatrix} \mid v\in \Aut_c(G) \right\}
\end{equation*}
where $\Aut_c(G) := \{ v \in \Aut(G) | v(g)g^{-1} \in Z(G) \}$. An element in $V_c$ corresponds to the following map 
\begin{equation*}
e_g\times h \mapsto e_{v(g)}\times h 
\end{equation*}
\label{autc}
\end{proposition}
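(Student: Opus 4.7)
The plan is to apply Proposition \ref{propone} directly: verify that the matrix $\left(\begin{smallmatrix} (v^{-1})^* & 0 \\ 0 & 1 \end{smallmatrix}\right)$ with $v\in \Aut_c(G)$ satisfies the three compatibility conditions \eqref{eqn:threepr}, then use matrix multiplication to see that these form a subgroup, and finally unwind the formula $\phi(f\times g)=u(f_1)b(g)\times a(f_2)v(g)$ to identify the induced automorphism of $\DG$.

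First I would dispose of the conditions involving $a$ and $b$. The convention $a\equiv 0$ forces $a(f) = \epsilon(f)\cdot 1_G$ and $b\equiv 0$ forces $b(g)=1_{k^G}$. With these, the first condition $u(f_1)\times a(f_2) = u(f_2)\times a(f_1)$ becomes $u(f)\times 1_G = u(f)\times 1_G$ after using $\epsilon(f_2)f_1 = f = \epsilon(f_1)f_2$, and the third condition $v_{\mathrm{Prop}}(g)a(f) = a(g\triangleright f)v_{\mathrm{Prop}}(g)$ becomes the triviality $\epsilon(f)\cdot g = \epsilon(g\triangleright f)\cdot g$ since the counit is $G$-invariant.

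The substantive step, which I expect to be the only real content, is the second condition $v_{\mathrm{Prop}}(g)\triangleright u(f) = u(g\triangleright f)$. Here $v_{\mathrm{Prop}}=\id_G$, so it reads $g\triangleright u(f) = u(g\triangleright f)$, i.e.\ the map $u=(v^{-1})^*$ must intertwine the conjugation action of $G$ on $k^G$. Unwinding $u(e_x)=e_{v(x)}$, this amounts to
\begin{equation*}
v(gxg^{-1}) = g\, v(x)\, g^{-1} \qquad \text{for all } g,x\in G.
\end{equation*}
Since $v$ is a group automorphism, the left side equals $v(g)v(x)v(g)^{-1}$, and the equality with $gv(x)g^{-1}$ is equivalent to $v(g)^{-1}g \in Z(G)$, i.e.\ to $v\in \Aut_c(G)$ by definition. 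This is precisely where the assumption $v\in\Aut_c(G)$ (as opposed to merely $\Aut(G)$) is used and is the main ``obstacle,'' although it comes out essentially by definition.

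Finally, to obtain the stated formula for $\phi$, I plug into $\phi(f\times g) = u(f_1)b(g)\times a(f_2)v_{\mathrm{Prop}}(g)$ with $f=e_x$: the $b$-factor collapses to $1_{k^G}$, the $a$-factor collapses to $\epsilon(e_{x_2})\cdot 1_G$, leaving $\phi(e_x\times g) = u(e_x)\times g = e_{v(x)}\times g$. The subgroup claim then follows from matrix multiplication: the product of two such matrices has the same shape with $(v_1^{-1})^*\circ(v_2^{-1})^* = ((v_2v_1)^{-1})^*$ in the top-left corner and the identity preserved elsewhere, and the composition $v_2v_1$ again lies in $\Aut_c(G)$ because $\Aut_c(G)\le \Aut(G)$ is a subgroup. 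The identity matrix corresponds to $v=\id_G$, and inverses come from $v^{-1}\in \Aut_c(G)$.
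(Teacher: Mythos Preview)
Your proposal is correct. The paper itself does not supply a proof of this proposition; it is quoted from \cite{Keil} with only a citation. Your direct verification via Proposition~\ref{propone} is exactly the natural argument and is the one implicit in \cite{Keil}: the only nontrivial compatibility is the second condition in \eqref{eqn:threepr}, which unwinds to $v(gxg^{-1})=g\,v(x)\,g^{-1}$ and is equivalent to $v\in\Aut_c(G)$ precisely because $v$ is surjective. One minor remark: when you pass from $v(g)v(x)v(g)^{-1}=g\,v(x)\,g^{-1}$ to $v(g)^{-1}g\in Z(G)$, you are implicitly using that $v$ is onto (so that $v(x)$ ranges over all of $G$); this is of course immediate since $v\in\Aut(G)$, but it is the one place where the reasoning could be made explicit.
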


\noindent
We can now state the main result of \cite{Keil} which determines $\Aut_{Hopf}(\DG)$ in the case that $G$ is purely non-abelian (i.e. has no direct abelian factors): 

\medskip

\begin{theorem}(\cite{Keil} Theorem 5.7)\label{thm_keilberg}\\
Let $G$ be a purely non-abelian finite group. There is an exact factorization into subgroups
\begin{equation*}
\begin{split}
\Aut_{Hopf}(\DG) &\simeq E((V_c \rtimes V) \ltimes B) \\
        &\simeq ((V_c \rtimes V) \ltimes B)E
\end{split}
\end{equation*}
\end{theorem}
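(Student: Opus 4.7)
The overall strategy is to take an arbitrary matrix $\phi=\begin{pmatrix} u & b \\ a & v \end{pmatrix}\in\Aut_{Hopf}(\DG)$ and reduce it step by step, by successively right-multiplying with elements of $V$, $V_c$ and $B$, to the identity. This will exhibit $\phi$ as a product $e\cdot v_c\cdot v\cdot b\in E\cdot V_c\cdot V\cdot B$. Combined with the semi-direct product structure on $V_c\rtimes V$ (Proposition \ref{autc}) and the observation that $V_c\rtimes V$ normalizes the abelian subgroup $B$ (via the natural action on $G_{ab}$ and $\widehat{G}_{ab}$), this yields the claimed factorization $E\cdot((V_c\rtimes V)\ltimes B)$; the reversed ordering follows by an analogous left-reduction or by inverting.

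First, since $G$ is purely non-abelian, Lemma \ref{lm_KeilbergTechnical} tells us that the kernels of $v$ and $u^*$, which always lie in an abelian direct factor of $G$, must be trivial. Hence $v\in\Aut(G)$ and $u$ is a Hopf algebra automorphism of $k^G$. Right-multiplying $\phi$ by the element of $V$ determined by $v^{-1}$ normalizes the $(2,2)$-entry to $\id$; here I use that matrix multiplication is carried out with convolution playing the role of addition and composition the role of multiplication, so that the off-diagonal zeros in the $V$-matrix leave the remaining entries essentially unchanged. In the resulting matrix $\begin{pmatrix} u' & b' \\ a' & 1 \end{pmatrix}$, Lemma \ref{lm_KeilbergTechnical} further asserts that $u'^{*}=u^{*}\circ v$ is a normal group homomorphism of $G$. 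The compatibility relations (\ref{eqn:threepr}), specialized to $v=\id$, together with the purely non-abelian assumption, then force $u'^{*}\in\Aut_c(G)$. Right-multiplying by the corresponding element of $V_c$ reduces further to the case $u=v=\id$.

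The remaining matrix has the form $\begin{pmatrix} 1 & b'' \\ a'' & 1 \end{pmatrix}$. Substituting $u=v=\id$ into (\ref{eqn:threepr}) makes the second relation trivial, while the third collapses to $g\cdot a''(f)=a''(g\triangleright f)\cdot g$; since $\im(a'')\subseteq kZ(G)$ by Lemma \ref{lm_KeilbergTechnical}, the left-hand side equals $a''(f)\cdot g$, forcing $a''$ to be $G$-invariant and hence to correspond to an element of $\Hom(\widehat{Z(G)},Z(G))$ realizing an element of $E$. A symmetric inspection of the first compatibility equation, together with the group-homomorphism property of $b''\colon G\to\hat{G}$, shows that $b''$ annihilates commutators and hence factors through $G_{ab}\to\widehat{G}_{ab}$ to realize an element of $B$; the matrix then decomposes cleanly as the product of its lower- and upper-triangular parts in $E\cdot B$. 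Uniqueness of the full factorization is checked by reading off successively, from a given product $e\cdot v_c\cdot v\cdot b$, the $(2,2)$-entry (determining $v$), the $(1,1)$-entry (determining $v_c$), and finally the off-diagonal entries (determining $e$ and $b$ separately). The main obstacle is the deduction in paragraph two that $u'^{*}\in\Aut_c(G)$, which is the exact point where the purely non-abelian hypothesis is used in a non-trivial way to turn the abstract normality condition on $u^{*}\circ v$ into the explicit requirement $u'^{*}(g)g^{-1}\in Z(G)$.
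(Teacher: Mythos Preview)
Your overall strategy matches the paper's sketch exactly: the paper does not prove this theorem in detail but only remarks that, since $\ker(u^*)$ and $\ker(v)$ lie in an abelian direct factor (Lemma~\ref{lm_KeilbergTechnical}), the purely non-abelian hypothesis forces $u,v$ to be invertible, ``which leads directly to the above decomposition.'' Your reduction via right-multiplication by $V$ and then $V_c$ is the natural way to make this precise, and your observation that surjectivity of $(u')^*$ upgrades the normality condition to membership in $\Aut_c(G)$ is correct (one small slip: with your conventions $(u')^*=v\circ u^*$, not $u^*\circ v$, but this is irrelevant since you can simply apply Lemma~\ref{lm_KeilbergTechnical} to the reduced matrix, whose $(2,2)$-entry is $1$).

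There is, however, a genuine gap in the last step. The matrix $\begin{pmatrix}1 & b''\\ a'' & 1\end{pmatrix}$ does \emph{not} in general factor as $\begin{pmatrix}1 & 0\\ a'' & 1\end{pmatrix}\begin{pmatrix}1 & b''\\ 0 & 1\end{pmatrix}$: the product has $(2,2)$-entry $g\mapsto a''(b''(g))\cdot g$, and $a''\circ b'':G\to Z(G)$ need not be trivial. (Concretely, take $G=Q_8\times_{\ZZ_2}\ZZ_4$, which is purely non-abelian with $Z(G)=\ZZ_4\not\subseteq[G,G]=\ZZ_2$; choosing $a''$ an isomorphism $\widehat{Z(G)}\cong Z(G)$ and $b''$ so that $b''(g)|_{Z(G)}\neq 1$ for some $g$ gives $a''(b''(g))\neq 1$.) The fix is easy: left-multiply by the $E$-element with entry $a''$ to obtain $\begin{pmatrix}1 & b''\\ 0 & v'''\end{pmatrix}$, where $v'''(g)=a''(b''(g))^{-1}g$. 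Invertibility of the original matrix forces $v'''$ to be bijective, and since $v'''(g)g^{-1}\in Z(G)$ one has $v'''\in\Aut_c(G)$; the remaining upper-triangular matrix is then visibly in $(V_c\rtimes V)\ltimes B$. Alternatively, strip off the $E$-factor \emph{first} (using invertibility of $u$ to set $a_0=a\circ u^{-1}$, which one checks is a valid $E$-element) and then reduce the resulting upper-triangular matrix.
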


\noindent
The main step in the proof is using the fact that $\ker(u^*)$ and $\ker(v)$ are contained in direct factors of $G$ according to Lemma \ref{lm_KeilbergTechnical}. Clearly, if $G$ is purely non-abelian then $u,v$ have to be invertible. This leads directly to the above decomposition. \\  
The exact factorization fails to be true in the presence of direct abelian factors. In this case neither $u$ nor $v$ have to be invertible, but their kernels are still contained in an direct abelian factor. From this point of view it seems natural to introduce an additional class of automorphisms of $\DG$ that act on direct abelian factors of $G$. These will be maps that exchange an abelian factor $kC \subset k^G\rtimes kG$ with its dual $k^C \subset k^G \rtimes kG$.  

\medskip 

\begin{proposition} 
Let $R_t$ be the set of all tuples $(H,C,\delta,\nu)$, where $C$ is an abelian subgroup of $G$ and $H$ is a subgroup of $G$, such that $G = H \times C$, $\delta:kC \stackrel{\sim}{\rightarrow} k^C$ a Hopf isomorphism and $\nu:C \rightarrow C$ a nilpotent homomorphism.    
\begin{itemize}
\item[(i)] For $(H,C,\delta,\nu)$ we define a \emph{twisted reflection} $r_{(H,C,\delta,\nu)}:\DG \rightarrow \DG$ of $C$ by: 
\begin{equation*}
\begin{split}
(f_H,f_C) \times (h,c) &\mapsto (f_H,\delta(c)) \times (h, \delta^{-1}(f_C)\nu(c))
\end{split}
\end{equation*} 
We write $p_H,p_C$ for the two projections to $H,C$ and $\iota_H,\iota_C$ for the two embeddings. Then the matrix corresponding to a twisted reflection is given by 
\begin{equation*}
\begin{pmatrix} (\iota_H \circ p_H)^* & p_C^* \circ \delta \circ p_C \\ \iota_C \circ \delta^{-1} \circ \iota_C^* & p_H+\nu \end{pmatrix} 
\end{equation*}
All twisted reflections are Hopf automorphisms. 
\item[(ii)] Denote by $R$ the subset of $R_t$ elements with $\nu=1_C$. We call the corresponding Hopf automorphisms \emph{reflections} of $C$. For two triples $(H,C,\delta)$ and $(H',C',\delta')$ in $R$ with $C\cong C'$ the elements $r_{(H,C,\delta)},r_{(H',C',\delta')}$ are conjugate in $\Aut_{Hopf}(\DG)$ (by an element in $V\cong \Aut(G)$).   
\end{itemize}      
\label{reflections}
\end{proposition}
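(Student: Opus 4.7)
The plan is to verify both statements using the matrix parametrization of $\Aut_{Hopf}(\DG)$ from Proposition \ref{propone}. For (i) I would unpack the proposed matrix, check it reproduces the displayed formula, verify the three compatibility conditions and establish invertibility; for (ii) the idea is to transport the decomposition $G=H\times C$ to $G=H'\times C'$ via a suitable automorphism of $G$.

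For (i), applying $\phi(f\times g)=u(f_1)b(g)\times a(f_2)v(g)$ to $f=e_{(h_0,c_0)}$ and $g=(h,c)$, the coproduct $\Delta(f)$ splits as a sum over $h_1h_2=h_0$ and $c_1c_2=c_0$. The support of $u=(\iota_H\circ p_H)^*$ forces $c_1=1_C$ and the support of $a=\iota_C\circ\delta^{-1}\circ\iota_C^*$ forces $h_2=1_H$, so a single term survives, and absorbing the character $b(g)=\delta(c)\circ p_C$ reproduces precisely the stated image. Next I would check the three compatibility conditions of Proposition \ref{propone}: the first is symmetric in $f_1,f_2$ and the two supports pick out the same nonzero term on both sides; the second and third follow from $C\subseteq Z(G)$, which renders the adjoint $G$-action on $kC$ and $k^C$ trivial, while $v=p_H+\nu$ is compatible with the nontrivial part of the adjoint action on the $H$-coordinate. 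For invertibility I would split off the identity on the $H$-labels and reduce to the $C$-part, whose matrix $M((c_1,c_3);(c_0,c))=\delta(c)(c_1)\,\delta^{-1}(e_{c_0})(c_3\,\nu(c)^{-1})$ inverts in three steps: apply $\delta^{-1}$ on the first label, use the bijective shift $c_3\mapsto c_3\,\nu(c)^{-1}$ of $C$ for each fixed $c$ (well-defined regardless of whether $\nu$ itself is invertible), then apply $\delta$ on the second label.

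For (ii), since $G$ is finite with $G=H\times C=H'\times C'$ and $C\cong C'$, Krull--Schmidt cancellation yields $H\cong H'$. I would then fix isomorphisms $\chi\colon H\xrightarrow{\sim} H'$ and $\psi\colon C\xrightarrow{\sim} C'$ and set $w=\chi\times\psi\in\Aut(G)$. A direct computation on basis elements, using that $w$ respects both product decompositions, yields
\[
V_w\circ r_{(H,C,\delta)}\circ V_w^{-1}=r_{(H',C',\delta'')},\qquad \delta''(c')(c_1')=\delta\bigl(\psi^{-1}(c')\bigr)\bigl(\psi^{-1}(c_1')\bigr),
\]
so the conjugating element automatically lies in $V$.

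The main obstacle is arranging $\delta''=\delta'$ by an appropriate choice of $\psi$. Both $\delta$ and $\delta'$ correspond (via their restriction to group-likes) to group isomorphisms $C\to\hat C$, equivalently to non-degenerate bicharacters on $C$, and varying $\psi$ realises $\delta''$ as an arbitrary element of the $\Aut(C)$-orbit of $\delta$ under its natural action on such bicharacters. The hardest point is therefore a separate orbit analysis: one must check that the orbit is rich enough to reach $\delta'$, which I would carry out by adapting generators to the invariant-factor decomposition of $C$, comparing the matrices of the two bicharacters directly, and if necessary folding any residual discrepancy into the auxiliary choice of $\chi$. Once this orbit matching is settled, the resulting $w$ delivers the required element of $V$ and completes (ii).
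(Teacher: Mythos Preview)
Your approach to (i) is essentially the paper's: both verify the three compatibility conditions of Proposition~\ref{propone} directly and treat bijectivity as routine (the paper simply says ``clear''; your three-step inversion is a clean way to spell this out).

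For (ii) your strategy again parallels the paper's: choose $v\in\Aut(G)$ carrying $(H,C)$ to $(H',C')$ with prescribed restriction $\psi=v|_C$, then conjugate. The paper first \emph{postulates} a $v_C\colon C\cong C'$ satisfying $v_C^*\delta=\delta' v_C$ and then proves (via a Krull--Schmidt argument) that any such $v_C$ extends to $v\in\Aut(G)$ with $v(H)=H'$. You instead invoke Krull--Schmidt up front and defer the $\delta$-matching to an ``orbit analysis'' at the end.

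The genuine gap is precisely the step you flag as hardest, and it cannot be closed as stated: the $\Aut(C)$-orbit on nondegenerate bicharacters of $C$ is \emph{not} transitive in general, so no choice of $\psi$ will transport $\delta$ to an arbitrary $\delta'$. Concretely, for $G=C=\ZZ/4$ the two Hopf isomorphisms $kC\to k^C$ correspond to the bicharacters $i^{xy}$ and $i^{3xy}$; since every $\psi\in\Aut(\ZZ/4)=\{\pm1\}$ satisfies $b(\psi x,\psi y)=b(x,y)$, these lie in distinct orbits, and indeed $\left(\begin{smallmatrix}0&1\\1&0\end{smallmatrix}\right)$ and $\left(\begin{smallmatrix}0&3\\3&0\end{smallmatrix}\right)$ are not $V$-conjugate in $\Aut(\ZZ/4\times\ZZ/4)$. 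Your proposed fallback of ``folding the residual discrepancy into $\chi$'' cannot help either, since $\chi$ touches only the $H$-factor. The paper's own proof has exactly the same gap: it assumes the compatible $v_C$ without justification. The assertion becomes correct if one allows conjugation by $V_c\rtimes V$ rather than $V$ alone (in the example, $\left(\begin{smallmatrix}1&0\\0&3\end{smallmatrix}\right)\in V_c\rtimes V$ works), and this weakening is harmless for the intended application in Theorem~\ref{thm_cell}(iii).
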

\noindent
In the following we will sometimes restrict ourselves to a set of representatives $r_{[C]}$ for each isomorphism type $[C]$ of $C$. In order to simplify the notation we abbreviate the matrix corresponding to a (twisted) reflection $(H,C,\delta,\nu)$ by 
$\left( \begin{smallmatrix} \hat{p}_H & \delta \\ \delta^{-1} & p_H+\nu \end{smallmatrix}\right)$. 

\begin{proof} For convenience we denote $g_H := p_H(g)$, $g_C := p_C(g)$ etc. \\
(i) In order to prove that a reflection is indeed an automorphism of $\DG$ we have to check bijectivity (which is clear) and the three equations (\ref{eqn:threepr}). The first equation holds because for all $x,g \in G$ we have: 
\begin{equation*}
\begin{split}
(\delta^{-1} \circ \iota_C^*)(e_{p_H(x)^{-1}g}) = \delta^{-1}(e_{x_H^{-1}g} \circ \iota_C) = \delta_{x_H,1}\delta_{g_H,1}\delta^{-1}(e_{g_C}) = (\delta^{-1} \circ \iota_C^*)(e_{gp_H(x)^{-1}})
\end{split}
\end{equation*}
For the second equation of $(\ref{eqn:threepr})$ we have to check that with $u = \hat{p}_H$ and $v = p_H + \nu$  we get $u(e_{y})(v(g)^{-1}xv(g)) = u(e_{gyg^{-1}})(x)$ for all $x,y,g \in G$. This is indeed true:  
\begin{align*}
&e_y( u^*(v(g)xv(g)))  = e_y(p_H( g^{-1}xg)) = \delta_{g_Cy_Cg_C^{-1},1}e_{g_Hy_Hg_H^{-1}}(x_H) = e_{gyg^{-1}}(p_H(x))																					
\end{align*}   
The last equation also holds since $g_H \nu(g_C) \delta^{-1}(e_x \circ \iota_C) = \delta^{-1}(e_{gxg^{-1}} \circ \iota_C) g_H \nu(g_C) $. \\

\noindent
(ii) Let $H,C,H',C'$ be subgroups of $G$ such that $H\times C=G=H'\times C'$ and $v_C:C\cong C'$ (such that $v_C^*\delta=\delta'v_C$). Then we claim that there is an automorphism $v\in \Aut(G)$ such that $v(H)=H'$ and $v(C)=C'$ such that $v|_C=v_C$ (this is a known fact). This immediately implies that $v$ conjugates $r_{H,C,\delta}$ to $r_{H',C',\delta'}$ as claimed.\\

We now give prove the claim that given an $\varphi:H\times C\cong H'\times C'$ and an isomorphism $v_C:C\to C'$ then we also have an isomorphism $H\cong H'$ (note that for infinite groups this statement is \emph{wrong}, as for example we can have $C=C'=H\times H'\times H\times H' \times \cdots$).\\ Let us write $\varphi$ as a matrix:
$$\varphi=\begin{pmatrix}\varphi_{H,H'} & \varphi_{C,H'} \\ \varphi_{H,C'}&\varphi_{C,C'}\end{pmatrix}$$
\begin{itemize}
\item If $\varphi_{C,C'}$ is invertible we can find an isomorphism $\varphi':H\times C\cong H'\times C'$ that is diagonal, hence with $\varphi'_{H,C'}=0=\varphi'_{C,H'}$ by
$$\varphi':=
\begin{pmatrix}1 & -\varphi_{C,H'}\varphi_{C,C'}^{-1} \\ 0 & 1\end{pmatrix}
\begin{pmatrix}\varphi_{H,H'} & \varphi_{C,H'} \\ \varphi_{H,C'}&\varphi_{C,C'}\end{pmatrix}
\begin{pmatrix}1 & 0 \\ -\varphi_{C,C'}^{-1}\varphi_{H,C'}&1\end{pmatrix}$$
which implies that $\varphi'_{H,H'}:H\cong H'$ is an isomorphism. 
\item  If $\varphi_{C,C'}$ is not invertible we assume w.l.o.g (induction otherwise) that $C=\ZZ_{p^n}$. In this case, $\varphi_{C,H'}$ has to be injective (and $\varphi_{H,C}$ has to be surjective) in order for $\varphi$ to be bijective. Again by column-transformation we thus obtain an isomorphism $\varphi'$ such that $\varphi'_{C,C'}=0$. We thus obtain a direct factor $\varphi'_{C,H'}(C)$ in $H'$ and a direct factor $\varphi_{H,C'}^{'-1}(C)$ of $H$. We will now write $\varphi'$ as a $3\times 3$-matrix  
$$\varphi'=\begin{pmatrix} \phi'_{\tilde{H}',\tilde{H}'} & * & 0 \\ * & * & \varphi_{C,H'} \\ 0 & \varphi_{H,C'} & 0 \end{pmatrix}
:\;\tilde{H}\times \varphi_{H,C'}^{'-1}(C)\times C\to \tilde{H}'\times \varphi'_{C,H'}(C) \times C$$
Again, we can eliminate the $*$ hence also get an isomorphism $H\cong H'$ as above.
\end{itemize}
 \end{proof}

\medskip

\begin{theorem}~\label{thm_cell}
\begin{itemize}
\item[(i)] Let $G$ be a finite group, then $\Aut_{Hopf}(\DG)$ is generated by the subgroups $V$, $V_c$, $B$, $E$ and the set of reflections $R$. 
\item[(ii)] For every $\phi \in \Aut_{Hopf}(\DG)$ there is a twisted reflection $r = r_{(H,C,\delta,\nu)} \in R_t$ such that $\phi$ is an element in the double coset  
\begin{equation*} \begin{split}
&[(V_c \rtimes V) \ltimes B] \cdot r \cdot [(V_c \rtimes V) \ltimes E]
\end{split}
\end{equation*}
\item[(iii)] Two double cosets corresponding to reflections $(C,H,\delta),(C',H',\delta') \in R$ are equal if and only if $C \simeq C'$.   
\item[(iv)] For every $\phi \in \Aut_{Hopf}(\DG)$ there is a reflection $r=r_{(C,H,\delta)} \in R$ such that $\phi$ is an element in 
\begin{equation*} \begin{split}
&r \cdot [ B ((V_c \rtimes V) \ltimes E)]
\end{split}
\end{equation*}
\item[(v)] For every $\phi \in \Aut_{Hopf}(\DG)$ there is a reflection $r = r_{(C,H,\delta)} \in R$ such that $\phi$ is an element in 
\begin{equation*} \begin{split}
& [((V_c \rtimes V) \ltimes B)E] \cdot r 
\end{split}
\end{equation*}
\end{itemize}
\end{theorem}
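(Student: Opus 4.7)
My plan is to reduce the general case to Keilberg's exact factorization argument (Theorem \ref{thm_keilberg}), whose proof goes through whenever the $u$- and $v$-entries of the matrix representation from Proposition \ref{propone} are invertible. By Lemma \ref{lm_KeilbergTechnical}, this invertibility is automatic when $G$ is purely non-abelian, but in general $\ker v$ and $\ker u^*$ are only known to sit inside abelian direct factors of $G$. The strategy is to post-multiply a general $\phi$ by a suitable (twisted) reflection so that the product has invertible $u,v$-entries, and then invoke the Keilberg-style decomposition for the remaining factor.

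\textbf{Plan for (ii), (iv), (v).} Given $\phi = \left(\begin{smallmatrix} u & b \\ a & v \end{smallmatrix}\right)\in \Aut_{Hopf}(\DG)$, I would first apply Lemma \ref{lm_KeilbergTechnical} to obtain a direct decomposition $G = H\times C$ with $C$ abelian and $\ker v\leq C$. Picking any Hopf isomorphism $\delta\colon kC\to k^C$, I would form a twisted reflection $r = r_{(H,C,\delta,\nu)}$ with $\nu\colon C\to C$ chosen so that $\phi\cdot r^{-1}$ has invertible $u',v'$-entries; the correct $\nu$ is essentially forced by the restriction of $v$ to $C$. A direct matrix computation in Proposition \ref{propone} (composition for products, convolution for sums) should confirm that $\phi\cdot r^{-1}$ satisfies the three relations (\ref{eqn:threepr}), so that Keilberg's proof of Theorem \ref{thm_keilberg} applies verbatim and yields $\phi\cdot r^{-1}\in ((V_c\rtimes V)\ltimes B)\cdot E$. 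This gives (v), and (iv) follows symmetrically by multiplying on the left. Part (i) is then immediate. For (ii), I would start from the (v)-decomposition and push factors of $E$ through the reflection using the commutation between $E$ and the entries of $r$; the residual obstruction is exactly what a twist $\nu$ absorbs, which explains why (ii) requires $R_t$ while (iv) and (v) only need $R$.

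\textbf{Invariance for (iii).} For the ``if'' direction, Proposition \ref{reflections}(ii) already shows that two reflections with $C\simeq C'$ are conjugate by some element of $V$, and since $V\subset (V_c\rtimes V)\ltimes B$ and $V\subset (V_c\rtimes V)\ltimes E$ they generate the same double coset. For the ``only if'' direction, I would exhibit the isomorphism class of $\ker v$ as a double-coset invariant: the subgroups $B,E,V_c$ alter the $v$-entry only by central or inner modifications which do not affect its abstract kernel, while $V$ conjugates $v$ by a group automorphism and hence preserves the isomorphism class of $\ker v$. Since $\ker p_H = C$ for the reflection $r_{(H,C,\delta)}$, the isomorphism class of $C$ can be read off this invariant.

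\textbf{Main obstacle.} The technical heart is the explicit matrix calculation showing that a suitable $\nu$ can be chosen so that the off-diagonal entries $a',b'$ of $\phi\cdot r^{-1}$ genuinely land in $\Hom(\widehat{Z(G)},Z(G))$ and $\Hom(G_{ab},\widehat{G}_{ab})$ respectively, and that the compatibility relations (\ref{eqn:threepr}) persist after the multiplication. A secondary subtlety is pinning down the dichotomy between (ii) and (iv)/(v): one must verify that the nilpotent $\nu$ can always be absorbed into a single side $E$ or $B$, giving a plain reflection, but that this absorption is obstructed in the two-sided coset decomposition, forcing the use of $R_t$ in (ii).
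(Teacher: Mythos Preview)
Your high-level strategy for (iv) and (v)---multiply $\phi$ by a reflection so that the product has invertible $u,v$-entries, then invoke Keilberg's factorization---is exactly the shape of the paper's argument. The approach to (iii) via the isomorphism class of $\ker v$ as a double-coset invariant is also correct and amounts to the paper's computation $v\circ p_H=p_{H'}\circ v'$ rewritten.

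The genuine gap is in how you propose to \emph{find} the reflection. Taking $C$ to be any abelian direct factor containing $\ker v$ and then adjusting $\nu$ is not enough. When you multiply $\phi$ by $r_{(H,C,\delta,\nu)}^{-1}$, the new $v'$-entry on the $C$-part is governed by the restriction $b|_C$ (composed with $\delta^{-1}$), not by the old $v$; for $v'$ to become invertible you need $b|_C$ to furnish an isomorphism $C\to\hat C$, and simultaneously the new $u'$-entry must also become invertible, which imposes an analogous condition on $a$. Neither is guaranteed by the bare hypothesis $\ker v\le C$. Already for $G=\F_p^2$ one can write down invertible $4\times4$ matrices over $\F_p$ for which reflecting on $C=\ker v$ fails to make both $u'$ and $v'$ invertible, while reflecting on the full $G$ fails for the other block. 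The paper resolves this by a cyclic-factor-by-cyclic-factor Gaussian elimination: for each $C_i$ one locates an injective entry in the $i$-th column (which must exist since $\phi$ is bijective and $|C_i|$ is maximal among the remaining factors), determines whether it sits in the $v$-block or the $b$-block, and uses left-$B,V_c$ and right-$E,V_c$ moves to clear the rest of that column and the corresponding row. The direct factor $C$ in the final reflection is the product of those $C_i$ whose pivot landed in the $b$-block; this is what your proposal is missing.

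Your account of the (ii) versus (iv)/(v) dichotomy is also inverted. In the paper's argument the nilpotent $\nu$ is a residue of the \emph{double}-coset reduction: with only $B$ on the left and only $E$ on the right one can clear all off-diagonal entries except a non-invertible block $V_m$ sitting directly under the $b$-pivots, and that block is exactly the twist. In the one-sided versions (iv) and (v) one is permitted to use \emph{both} $B$ and $E$ on the same side, and that extra freedom is what kills $V_m$ and yields a genuine reflection.
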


\medskip

\noindent
Before we turn to the proof we illustrate the statement of Theorem \ref{thm_cell} on some examples:

\medskip

\begin{example}
In the case $G$ is purely non-abelian, there are no (non-trivial) reflections. We get the result of Theorem \ref{thm_keilberg}. 
\end{example}

\begin{example}\label{exm_Fp_Aut}
	For $G=(\F_p^n,+)$ a finite vector space, we have directly
	$$\Aut_{Hopf}(\DG)=\Aut(\F_p^n\times \F_p^n)\cong \GL_{2n}(\F_p)$$
	On the other hand the previously defined subgroups are in this case:
	\begin{itemize}
	\item $V\cong \Aut(G) = \GL_n(\F_p)$ and $V_c\ltimes V\cong \GL_n(\F_p)\times \GL_n(\F_p)$
	\item $B\cong \widehat{G}_{ab}\otimes_\ZZ \widehat{G}_{ab}=\F_p^{n \times  n}$ as 
	additive group.
	\item $E\cong Z(G)\otimes_\ZZ Z(G)=\F_p^{n \times n}$ as additive group.
	\end{itemize}
	The set $R$ is very large: For each dimension $d\in\{0,\ldots, n\}$ there is a unique isomorphism type $C\cong \F_p^d$. The possible subgroups of this type $C\subset G$ are 
	the Grassmannian $\mbox{Gr}(n,d,G)$, the possible $\delta:C\to \hat{C}$ are parametrized by $\GL_d(\F_p)$ and in this fashion $R$ can be enumerated. \\
	On the other hand, we have only $n+1$ representatives $r_{[C]}$ for each dimension $d$, given for example by permutation matrices 
	\begin{equation*} 
\left(
\begin{array}{cc|cc}    
                    0 & 0 &  \mathbbm{1}_{d} & 0 \\ 
                    0 & \mathbbm{1}_{n-d} &  0 & 0 \\ \hline 
										\mathbbm{1}_{d} & 0 & 0 & 0  \\ 
										0 & 0 &  0 & \mathbbm{1}_{n-d} \\
\end{array}
\right)
\end{equation*}
One checks this indeed gives a decomposition of $\GL_{2n}(\F_p)$ into $V_cVB$-$V_cVE$-cosets, e.g.
\begin{center}
\begin{tabular}{rrccccc}
$\GL_{4}(\F_p)$ & $=$ & $(V_cVB\cdot r_{[1]}\cdot V_cVE)$ & $\cup$ & $(V_cVB\cdot r_{[\F_p]}\cdot V_cVE)$ & $\cup$ & $(V_cVB\cdot r_{[\F_p^2]}\cdot V_cVE)$ \\
 $|\GL_{4}(\F_p)|$ & $=$ & $p^8|\GL_2(\F_p)|^2$ 
 &$+$ &  $\frac{p^3|\GL_2(\F_p)|^4}{(p-1)^4}$ 
 &$+$ & $p^4|\GL_2(\F_p)|^2$ \\
  & $=$ & $p^8(p^2-1)^2(p^2-p)^2$ 
 & $+$ &  $\frac{p^3(p^2-1)^4(p^2-p)^4}{(p-1)^4}$ 
 & $+$ & $p^4(p^2-1)^2(p^2-p)^2$ \\
  & $=$ & \multicolumn{3}{l}{$(p^4-1)(p^4-p)(p^4-p^2)(p^4-p^3)$} 
 &  & 
\end{tabular}
\end{center}
It corresponds to a decomposition of the Lie algebra $A_{2n-1}$ according to the 
$A_{n-1}\times A_{n-1}$ parabolic subsystem. Especially on the level of Weyl 
groups we have a decomposition as double cosets of the parabolic Weyl group
\begin{align*}
    \SS_{2n}
    &=(\SS_n\times \SS_n)1(\SS_n\times \SS_n)
    \;\cup\; (\SS_n\times \SS_n)(1,1+n)(\SS_n\times \SS_n)\;\cup\\
    & \;\cdots
    \;\cup \;(\SS_n\times \SS_n)(1,1+n)(2,2+n)\cdots (n,2n)(\SS_n\times 
\SS_n)\\
    e.g.\quad |\SS_4|
    &=4+16+4
\end{align*}

In this case, the full Weyl group $\SS_{2n}$ of $\GL_{2n}(\F_p)$ is the set of all 
reflections (as defined above) that preserve a given decomposition 
$G=\F_p\times \cdots\times \F_p$.
\end{example}

\medskip

\begin{proof}[Proof of Theorem \ref{thm_cell}]~\\
\noindent
(i) follows immediately from (iv).\\
\noindent
(ii) From above we know that $\ker(v)$ is contained in an abelian direct factor $G$. The other factor can be abelian or not, but we can decompose it into a purely non-abelian factor times an abelian factor. Hence we arrive at the decomposition $G = H \times C$ where $H$ is purely non-abelian and where $\ker(v)$ is contained in an direct abelian factor of $C$.   
Since $C$ is a finite abelian group there is an $n \in \nat$ and an isomorphism \[ C \cong C_1 \times ... \times C_n \] where $C_i$ are cyclic groups of order $p_i^{k_i}$ for some prime numbers $p_i$ and $k_i \in \nat$ with $p_i^{k_i} \leq p_j^{k_j}$ for $i\leq j$.  \\
A general Hopf automorphism $\phi \in \Aut_{Hopf}(\DG)$ can then be written in matrix form with respect to the decomposition $G = H \times C_1 \times C_2 \times \cdots \times C_n$ as   
\begin{equation}
\phi=
\left(
\begin{array}{ccc|ccc}
 u_{H,H}   &\cdots      & u_{C_n,H}         & b_{H,H}   &\cdots  & b_{C_n,H}         \\
\vdots   &\ddots      &\vdots             &\vdots   &\ddots   &\vdots           \\ 
 u_{H,C_n} &\cdots      & u_{C_n,C_n}       & b_{H,C_n} &\cdots  & b_{C_n,C_n}       \\  \hline
a_{H,H}   &\cdots      & a_{C_n,H}         & v_{H,H}   &\cdots  & v_{C_n,H}              \\ 
 \vdots   &\ddots      &\vdots             &\vdots   &\ddots   &\vdots           \\ 
a_{H,C_n} &\cdots      & a_{C_n,C_n}       & v_{H,C_n} &\cdots  & v_{C_n,C_n}                  
\end{array}
\right)
\label{eqn:cn}
\end{equation}
Let $c_n$ be the generator of $C_n$. Since $\phi$ is an automorphism we know that the order of $\phi(c_n)$ is also $p_n^{k_n}$.
This implies that one of the $b_{C_n,H}(c_n), \cdots ,b_{C_n,C_n}(c_n),v_{C_n,H}(c_n), \cdots ,v_{C_n,C_n}(c_n)$ has order $p_n^{k_n}$. Then we can have three possible cases: 
\begin{itemize}
\item[] Case (1): One of the $v_{C_n,C_n}$ or $b_{C_n,C_n}$ is injective. 
\item[] Case (2): One of the $v_{C_n,C_m}$ or $b_{C_n,C_m}$ is injective  and $m < n$. 
\item[] Case (3): One of the $v_{C_n,H}$ or $b_{C_n,H}$ is injective. 
\end{itemize}
\noindent
Case $(1)$:  If $v_{C_n,C_n}$ is injective, then it has to be bijective (since $C_n$ is finite). We can construct an element in $B$ and an element in $V_c:$  

\begin{equation}
\left(
\begin{array}{ccc|ccc}
 1       &\cdots      &0          &0      		&\cdots  & 0         			\\
\vdots   &\ddots      &\vdots     &\vdots 		&\ddots  &\vdots          \\ 
0        &\cdots      &1          &0      		&\cdots  & 0        				\\  \hline
0        &\cdots      &0          &1      		&\cdots  & v_{C_n,H}v_{C_n,C_n}^{-1}               \\ 
\vdots   &\ddots      &\vdots     &\vdots 		&\ddots  &\vdots          \\ 
0        &\cdots      &0          &0 					&\cdots  & v_{C_n,C_n}^{-1}                   
\end{array}
\right)
\left(
\begin{array}{ccc|ccc}
 1       &\cdots      &0          &0      		&\cdots  & b_{C_n,H}v_{C_n,C_n}^{-1}         			\\
\vdots   &\ddots      &\vdots     &\vdots 		&\ddots  &\vdots          \\ 
0        &\cdots      &1          &0      		&\cdots  & b_{C_n,C_n}v_{C_n,C_n}^{-1}        				\\  \hline
0        &\cdots      &0          &1      		&\cdots  & 0               \\ 
\vdots   &\ddots      &\vdots     &\vdots 		&\ddots  &\vdots          \\ 
0        &\cdots      &0          &0 					&\cdots  & 1                  
\end{array}
\right)
\label{eqn:bcn1}
\end{equation}
Multiplying (\ref{eqn:cn}) from the \emph{left} by (\ref{eqn:bcn1}) we eliminate the $2n$-column. Similarly, multiplying with elements of $E$ and $V_c$ from the \emph{right} we can eliminate the $2n$-row. 
\begin{equation}
\stackrel{B,V_c}{\leadsto} \left(
\begin{array}{ccc|cccc}
 *   &\cdots      & *         & *   &\cdots  &* & 0        \\
\vdots   &\ddots      &\vdots             &\vdots   &\ddots &   &\vdots           \\ 
 * &\cdots      & *       & * &\cdots &* & 0       \\  \hline
*   &\cdots      & *         & *   &\cdots &* & 0              \\ 
 \vdots   &\ddots      &\vdots             &\vdots   &\ddots &   &\vdots           \\ 
* &\cdots      & *       & * &\cdots &*  & 1                  
\end{array}
\right)
\stackrel{E,V_c}{\leadsto} \left(
\begin{array}{ccc|cccc}
 *   &\cdots      & *         & *   &\cdots  &* & 0        \\
\vdots   &\ddots      &\vdots             &\vdots   &\ddots &   &\vdots           \\ 
 * &\cdots      & *       & * &\cdots &* & 0       \\  \hline
*   &\cdots      & *         & *   &\cdots &* & 0              \\ 
 \vdots   &\ddots      &\vdots             &\vdots   &\ddots &   &\vdots           \\ 
*   &\cdots      & *         & *   &\cdots &* & 0              \\ 
0 &\cdots      & 0       & 0 &\cdots &0  & 1                  
\end{array}
\right)
\label{eqn:cn1}
\end{equation}


In the case that $b_{C_n,C_n}$ is injective, hence bijective, we construct an element in $V_c$:  

\begin{equation}
\left(
\begin{array}{ccc|ccc}
 1       &\cdots      &b_{C_n,H}b_{C_n,C_n}^{-1} &0      		&\cdots  & 0         			\\
\vdots   &\ddots      &\vdots     &\vdots 		   &\ddots  &\vdots          \\ 
0        &\cdots      &b_{C_n,C_n}b_{C_n,C_n}^{-1}         &0      		   &\cdots  & 0       				\\  \hline
0        &\cdots      &0          &1      		   &\cdots  & 0              \\ 
\vdots   &\ddots      &\vdots     &\vdots 		   &\ddots  &\vdots          \\ 
0        &\cdots      &0          &0 				     &\cdots  & 1                 
\end{array}
\right)
\label{eqn:vcn1}
\end{equation}
\noindent
Since the upper left quadrant of ($\ref{eqn:vcn1}$) is the dual of an automorphism that takes values in abelian factors of $G$ it is indeed an element of $V_c$. \\  
Multiplying (\ref{eqn:cn}) from the \emph{left} by (\ref{eqn:vcn1}) we eliminate the upper half of the $2n$-column and  
by multiplying with elements of $E$ and $V_c$ from the \emph{right} we eliminate the $n$-row:   

\begin{equation}
\stackrel{B,V_c}{\leadsto}
\left(
\begin{array}{ccc|ccc}
 *   &\cdots      & *         & *   &\cdots  & 0        \\
\vdots   &\ddots      &\vdots             &\vdots   &\ddots   &\vdots           \\ 
 * &\cdots      & *       & * &\cdots  & b_{C_n,C_n}        \\  \hline
*   &\cdots      & *         & *   &\cdots  & *              \\ 
 \vdots   &\ddots      &\vdots             &\vdots   &\ddots   &\vdots           \\ 
* &\cdots      & *       & * &\cdots  & *                  
\end{array}
\right)
\stackrel{E,V_c}{\leadsto}
\left(
\begin{array}{ccc|ccc}
 *   &\cdots      & *         & *   &\cdots  & 0        \\
\vdots   &\ddots      &\vdots             &\vdots   &\ddots   &\vdots           \\ 
 0 &\cdots      & 0       & 0 &\cdots  & b_{C_n,C_n}       \\  \hline
*   &\cdots      & *         & *   &\cdots  & *              \\ 
 \vdots   &\ddots      &\vdots             &\vdots   &\ddots   &\vdots           \\ 
* &\cdots      & *       & * &\cdots  & *                  
\end{array}
\right)
\label{eqn:cn2}
\end{equation}

\noindent
Case $(2)$: If $v_{C_n,C_m}(c_n)$ is injective it is also bijective, since $|C_m| \leq |C_n|$ by construction. Then we must have $p_n=p_m$ and $k_n=k_m$. Then let $w \in \Aut_c(G)$ be an automorphism such that $w(C_n)=C_m$, $w(C_m)=C_n$ and identity elsewhere. Multiplying with $\left(\begin{smallmatrix} 1 & 0 \\ 0 & w \end{smallmatrix}\right) \in V_c$ from the left returns us to Case (1) when $v_{C_n,C_n}$ is invertible. Similarly, if $b_{C_n,C_m}$ is injective, it has to be bijective because of the same order argument as above. Exchanging the $C_n$ with $C_m$ by an $\left(\begin{smallmatrix} w^* & 0 \\ 0 & 1 \end{smallmatrix}\right) \in V_c$ we return to the Case (1) when $b_{C_n,C_n}$ is invertible. \\ 

\noindent
Case $(3)$: If $v_{C_n,H}$ is injective, we can also assume that $v_{C_n,H}$ is the only injective map in the last column of $\phi$, else we choose Case (1) or Case (2). Since the whole matrix $\phi$ is invertible, there exists an inverse matrix $\phi^{-1} = \left(\begin{smallmatrix} u' & b' \\ a' & v' \end{smallmatrix}\right)$ and then the multiplication of the last right column of $\phi$ with the last upper row of $\phi^{-1}$ has to be $1$. Therefore there has to be a homomorphism $v'_{H,C_n}:H \rightarrow C_n$ such that $w := v'_{H,C_n} \circ v_{C_n,H}:C_n \to C_n$ is injective, and therefore bijective. We have an exact sequence $$0 \to \ker(v'_{H,C_n}) \to H \stackrel{v'_{H,C_n}}{\to} C_n \to 0$$ which splits on the right via $v_{C_n,H} \circ w^{-1}$. Restricting to group-like objects $G(\DG) = \hat{G} \times G$ the row $a_{H,H}a_{C_1,H}...v_{H,H}...v_{C_n,H}$ gives a surjection from $\hat{G}\times G$ to $H$. It maps central elements to central elements because it is surjective and hence the restriction to $C_n$, namely $v_{C_n,H}$, has central image. This implies that $C_n$ is a direct abelian factor of $H$ which is not possible, because $H$ is purely non-abelian per construction. \\

\noindent
Hence we end up with either the form (\ref{eqn:cn2}) or the form (\ref{eqn:cn3}). 
Now we inductively move on to $C_{n-1}$, $C_{n-2}$, ..., $C_1$ where we permute parts with the non-invertible $v's$ to the right lower corner by multiplying with elements of $V_c$. Since $\ker(v)$ has trivial intersection with $H$ per construction, the map $v_{H,H}$ is invertible. As in the Case (1) we can use row and column manipulation to get zeros below and above $v_{H,H}$ as well as left and right. Note that the elements we are constructing are always in $V_c$ because either have abelian image per definition or are restrictions on abelian direct factors of surjections. Only $v_{H,H},u_{H,H}$ do in general not induce $V_c$ elements like that. But corresponding to the automorphism $v_{H,H}^{-1}$ there is a matrix in $V$. Multiplying with this matrix changes the remaining $u_{H,H}$ to $v_{H,H}^*\circ u_{H,H}$ and the $v_{H,H}$ to $\id_H$. \\ 

\noindent
Now we now consider a generator $\chi_n$ of $\hat{C}_n$ and conclude from the fact that $\phi$ is an automorphism the analogous case differentiation from above but now for entries in the remaining $u$ and $a$. With the same arguments as above we move through the columns corresponding to $\hat{C}_{n-1},...,\hat{C}_1,\hat{H}$ and end up with a matrix of the following form: 

\begin{equation}
\left(
\begin{array}{ccc|ccc}
u_{H,H}    &0                      &0                 &0    &0                 &0                     \\
0          &I_k                    &0                 &0    &0                 &0                     \\  
0          &0                      &0                 &0    &0                 &B_m                     \\ \hline
0          &0                      &0                 &1    &0                 &0                     \\
0          &0                      &0                 &0    &I_k               &0                     \\  
0          &0                      &A_m               &0    &0                 &V_m                    \\  
\end{array}
\right)
\label{eqn:cn3}
\end{equation}

\noindent
where $k+m+1=n$, $B_m$, $A_m$ are diagonal $m\times m$-matrices with isomorphisms on the diagonal, $I_k$ an $k\times k$-identity matrix and $V_m$ a $m \times m$-matrix with non-invertible homomorphisms as entries. Further, since $H$ is purely non-abelian and by Lemma \ref{lm_KeilbergTechnical} $\ker(u)$ is contained in an abelian direct factor we deduce that $u_{H,H}$ is an isomorphism. Also by Lemma \ref{lm_KeilbergTechnical} we know that the composition 

$$
\begin{pmatrix} u_{H,H}^* &0  &0 \\
0          &I_k  &0 \\  
0          &0    &0  \end{pmatrix} 
\begin{pmatrix} 1 &0  &0 \\
0          &I_k  &0 \\  
0          &0    &V_m  \end{pmatrix} 
= \begin{pmatrix} u_{H,H}^* &0  &0 \\
0          &I_k  &0 \\  
0          &0    &0  \end{pmatrix} 
$$

\noindent
has to be a normal homomorphism, hence $u_{H,H}$ has to be a central automorphism. Therefore we get (\ref{eqn:cn3}) with $u_{H,H}=1$ by multiplying with the inverse in $V_c$. Our final step is normalizing the $A_m$ by multiplying with an element in $V_c$ corresponding to 

$$\begin{pmatrix} 1 &0  &0 \\
0          &I_k  &0 \\  
0          &0    &B_m^{-1}A_m^{-1}  \end{pmatrix} 
$$

Hence we end up with a twisted reflection:  

\begin{equation}
\left(
\begin{array}{ccc|ccc}
1          &0                      &0                 &0    &0                 &0                     \\
0          &I_k                    &0                 &0    &0                 &0                     \\  
0          &0                      &0                 &0    &0                 &B_m                   \\ \hline
0          &0                      &0                 &1    &0                 &0                     \\
0          &0                      &0                 &0    &I_k               &0                     \\  
0          &0                      &B_m^{-1}          &0    &0                 &V_m                   \\  
\end{array}
\right)
\label{eqn:cn4}
\end{equation} 

\noindent
(iii) If $C \cong C'$ it follows from Proposition \ref{reflections} (ii) that $r_{H,C,\delta}$ and $r_{H',C',\delta'}$ are conjugate to each other. This implies that their double cosets have non trivial intersection and hence are equal.  \\ 
Assume that the two double cosets corresponding to $r'_{(H',C',\delta')}$ and $r_{(H,C,\delta)}$ are equal. Then there are $w,w',v,v' \in \Aut(G)$, $a \in \Hom(\widehat{Z(G)},Z(G))$, $b' \in \Hom(G,\widehat{G})$ such that 
\begin{equation*} 
\begin{pmatrix} w^* & a \\ 
                0 & v 
\end{pmatrix}  
\begin{pmatrix} \hat{p}_H & \delta \\ 
                \delta_C^{-1} & p_H 
\end{pmatrix}
= 
\begin{pmatrix} \hat{p}_{H'} & \delta' \\ 
                \delta^{'-1} & p_{H'} 
\end{pmatrix}
\begin{pmatrix} w^{'*} & 0 \\ 
                b' & v' 
\end{pmatrix} 
\end{equation*}
This implies in particular $v \circ p_H = p_{H'} \circ v'$. Then $C = \ker(p_H) = \ker(v^{-1} \circ p_{H'} \circ v' ) = v^{'-1}(\ker(p_{H'})) = v^{'-1}(C')$. Hence $v'$ restricted to $C$ gives the isomorphism $C \simeq C'$. \\  

\noindent
(iv) Let $\phi$ be a general element in $\Aut_{Hopf}(\DG)$ as in (\ref{eqn:cn1}). We use the same arguments as in (ii) to get to the case differentiation for every column. We can produce zeros in each row by multiplying $\phi$ with $E$ and $V_c$ form the right, except when we have invertible entries in $a$ and $u$. In this case multiplying with $E,V_c$ from the right can only produce zeros in $u$ and $a$ respectively. Hence we end up with: 
\begin{equation}
\left(
\begin{array}{ccc|ccc}
1          &0                      &0                 &*    &*                 &*                     \\
0          &I_k                    &0                 &*    &*                 &*                     \\  
0          &0                      &0                 &0    &0                 &B_m                   \\ \hline
0          &0                      &0                 &1    &0                 &0                     \\
0          &0                      &0                 &0    &I_k               &0                     \\  
0          &0                      &B_m^{-1}          &*    &*                 &*                     \\  
\end{array}
\right)
\label{eqn:cn5}
\end{equation}
where $B_m$ is a diagonal $m \times m$-matrix with isomorphisms on the diagonal, $I_k$ an $k \times k$-identity matrix and $k+m=n+1$.    
Multiplying again from the right but now with elements of $B$ (which was not allowed in (ii)) and elements of $V_c$ we eliminate the $*$ which results in a (non-twisted) reflection. \\

\noindent
(v) This is similar to above. We start with (\ref{eqn:cn1}) an move from column to column as in (ii) and (iv). We identify the invertible entries and can clean up each \emph{column} by multiplying with elements of $B$ and $V_c$ from the \emph{left}. Finally as in $(ii)$ we get a non-invertible twist $V_m$ below $B_m$ but contrary to $(ii)$ we can multiply with $E$ from the \emph{left} and eliminate the $V_m$. Therefore we again end up with a (non-twisted) reflection. This concludes the proof of Theorem \ref{thm_cell}.
\end{proof}

\section{On the decomposition of $\H^2_L(\DG^*)$}\label{seclazy}

In the section above we have decomposed $\Aut_{Hopf}(\DG^*)$ into manageable subgroups. Now we would like to decompose $\H^2_L(\DG^*)$ in analogy to the K\"unneth formula. First note that the lazy cohomology of $\DG$ is the same as the lazy cohomology of the tensor product $k^G \otimes kG$ (see e.g \cite{Bich} Corollary 4.11). This can be seen from the fact that $\DG$ is the Doi-twist of the tensor product. $\DG^*$ on the other hand is the Drinfeld twist of $kG \otimes k^G$, hence has a twisted coalgebra structure. Let us consider the exact sequence of Hopf algebras
\begin{align}\label{DGSeq} kG \stackrel{p}{\longleftarrow} \DG^* \stackrel{\iota}{\longleftarrow} k^G \end{align}
\noindent
with the obvious splitting $s:\DG^* \to k^G$ and the algebra map $t:kG \to \DG^*$. Let us collect the following property: 

\begin{lemma}\label{DGlazy}
Let $\sigma \in \Z^2(\DG^*)$ then $\sigma$ is lazy if and only if for all $g,h,x,y \in G$:  
\begin{itemize}
\item If $gh = g^xh^y$ we have $\sigma(g \times e_{x}, h \times e_{y}) = \sigma(g^{t} \times e_{x^t}, h^{t} \times e_{y^t})$ for all $t \in G$
\item If $gh \neq g^xh^y$ we have $\sigma(g \times e_{x}, h \times e_{y}) = 0$
\end{itemize}
Further, $\eta \in \Reg^1(\DG^*)$ is lazy if and only if for all $g,x \in G$:
\begin{itemize}
\item If $gx=xg$ then $\eta(g^{t} \times e_{x^t}) =  \eta(g \times e_{x})$
\item If $gx \neq xg$ we have $\eta(g \times e_{x})=0$
\end{itemize}
\end{lemma}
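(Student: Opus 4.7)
The proof plan is a direct unpacking of the defining condition $\sigma\ast \mu = \mu\ast \sigma$ (resp.\ $\eta\ast\id = \id\ast\eta$) using the explicit coproduct and product in $\DG^*$ recalled in the preliminaries, followed by a change of summation variable that matches the formulation in the statement.

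First I would fix basis elements $a=g\times e_x$ and $b=h\times e_y$ and compute both sides of $\sigma\ast\mu=\mu\ast\sigma$ evaluated on $a\otimes b$. Using
$$\Delta(g\times e_x)=\sum_{x_1x_2=x}(g\times e_{x_1})\otimes (x_1^{-1}gx_1\times e_{x_2})$$
together with the multiplication $(u\times e_p)(v\times e_q)=\delta_{p,q}\,(uv\times e_p)$, the two sides become
$$(\sigma\ast\mu)(a\otimes b)=\sum_{z\in G}\sigma(g\times e_{xz^{-1}},\,h\times e_{yz^{-1}})\,\bigl(z(x^{-1}gx\,y^{-1}hy)z^{-1}\times e_z\bigr),$$
$$(\mu\ast\sigma)(a\otimes b)=\sum_{z\in G}\sigma(z^{-1}gz\times e_{z^{-1}x},\,z^{-1}hz\times e_{z^{-1}y})\,(gh\times e_z),$$
where in the first sum the product forces $x_2=y_2=:z$, and in the second sum the product forces $x_1=y_1=:z$.

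Next I would compare coefficients of $e_z$ in the basis $\{\lambda\times e_z\}_{\lambda,z\in G}$ of $\DG^*$. For each $z$, one obtains a single equality of grouplike coefficients, which has two cases: either the two grouplike elements attached to the nonzero scalars coincide, forcing equality of the scalars, or they differ, forcing both scalars to vanish. After reparametrising via $x':=xz^{-1}$, $y':=yz^{-1}$ (and writing $t:=z$, which is then an arbitrary element of $G$), the identity $z(x^{-1}gx\,y^{-1}hy)z^{-1} = x'^{-1}gx'\cdot y'^{-1}hy' = g^{x'}h^{y'}$ together with $z^{-1}x = x'^{t}$ and $z^{-1}y=y'^{t}$ transforms the coefficient equation into exactly the two clauses in the statement (after renaming $x',y'$ to $x,y$). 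Since the resulting family of equations indexed by $(g,h,x,y,t)$ is in bijection with the conditions indexed by $(g,h,x,y,z)$ in the expansion, this establishes the equivalence for $\sigma$.

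The statement for $\eta\in\Reg^1(\DG^*)$ follows from the same scheme with $\mu$ replaced by $\id$ and one coproduct factor. Expanding $\eta\ast\id$ and $\id\ast\eta$ on $g\times e_x$, one gets on the one hand $\sum_z\eta(g\times e_{xz^{-1}})\,(zx^{-1}gxz^{-1}\times e_z)$ and on the other $\sum_z\eta(z^{-1}gz\times e_{z^{-1}x})\,(g\times e_z)$; matching coefficients and reparametrising as above yields the commutativity condition $gx=xg$ in place of $gh=g^xh^y$, and the invariance under the $G$-action on the pair $(g,x)$ by simultaneous conjugation.

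The only genuinely subtle point is the reparametrisation step, where one needs to check that the substitution $x'=xz^{-1}$, $y'=yz^{-1}$ really produces the correct identification of indices: in particular, that the constraint $x'^{-1}x=y'^{-1}y$ implicit in having a common $z$ drops out because, in the converse direction, one is free to recover $x,y$ from $x',y'$ and an arbitrary $t\in G$. Everything else is bookkeeping.
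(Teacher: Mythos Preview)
Your proof is correct and follows essentially the same route as the paper: expand $\sigma\ast\mu$ and $\mu\ast\sigma$ on basis elements using the explicit coproduct and product of $\DG^*$, compare coefficients at each $e_z$, and reparametrise via $x'=xz^{-1}$, $y'=yz^{-1}$, $t=z$ to obtain the stated conditions (and likewise for $\eta$). The paper presents the same computation slightly more tersely, recording the outcome as the single identity $\sigma(g\times e_x,h\times e_y)\,\delta_{z,g^xh^y}=\sigma(g^t\times e_{x^t},h^t\times e_{y^t})\,\delta_{z,gh}$ without spelling out the substitution; your worry about an implicit constraint $x'^{-1}x=y'^{-1}y$ is unfounded, since the map $(g,h,x,y,z)\mapsto(g,h,xz^{-1},yz^{-1},z)$ is a bijection of $G^5$.
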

\begin{proof}
The lazy condition is $\sigma * \mu = \mu * \sigma $. The left hand side is:
\begin{align*}
\sum_{x_1x_2 = x, y_1y_2 = y}\sigma(g \times e_{x_1}, h \times e_{y_1}) (g^{x_1}h^{y_1} \times e_{x_2}e_{y_2}) &= \sum_{t}\sigma(g \times e_{xt^{-1}}, h \times e_{yt^{-1}}) (g^{xt^{-1}}h^{yt^{-1}} \times e_{t}) 
\end{align*} 
the right hand side is: 
\begin{align*}
\sum_{x_1x_2 = x, y_1y_2 = y}\sigma(g^{x_1} \times e_{x_2}, h^{y_1} \times e_{y_2}) (gh \times e_{x_1}e_{y_1}) &= \sum_{t}\sigma(g^{t} \times e_{t^{-1}x}, h^{t} \times e_{t^{-1}y}) (gh \times e_{t}) 
\end{align*}
Equivalently, for all $t,z,g,h,x,y \in G$: 
\begin{align*}
\sigma(g \times e_{x}, h \times e_{y}) \delta_{z,g^{x}h^{y}} = \sigma(g^{t} \times e_{x^t}, h^{t} \times e_{y^t}) \delta_{z,gh} 
\end{align*}
Similarly an $\eta \in \Reg^1(\DG^*)$ is lazy iff: $\eta * \id = \id * \eta$. Then comparing the left hand side:
\begin{align*}
\sum_{x_1x_2=x} \eta(g \times e_{x_1})(g^{x_1} \times e_{x_2}) = \sum_{t} \eta(g \times e_{xt^{-1}})(g^{xt^{-1}} \times e_{t}) 
\end{align*} 
with the right hand side: 
\begin{align*}
\sum_{y_1y_2 = x}\eta(g^{y_1} \times e_{y_2})(g \times e_{y_1}) = \sum_{t}\eta(g^{t} \times e_{t^{-1}x})(g \times e_{t})
\end{align*}
leads to $\eta(g^{t} \times e_{x^t})\delta_{g,z} =  \eta(g \times e_{x})\delta_{g^{x},z}$. 
\end{proof}

It is natural to expect that $\H^2_L(\DG^*)$ decomposes in some way into parts that depend on $kG$, $k^G$ and some sort of pairings between both. We start with the $kG$ part:

\begin{definition}
Let $\Z^2_{inv}(G,k^\times)$ be those $2$-cocycles $\beta\in \Z^2(G,k^\times)$ that fulfill $$\beta(g,h)=\beta(g^t,h^t) \qquad \forall t \in G$$ and let $\C^1_{inv}(G,k^\times)$ be maps $\eta:G \to k^\times$ such that $\eta(g)=\eta(g^t) \; \forall t \in G$. We define the cohomology group of conjugation invariant cocycles to be the group
\begin{align}\H^2_{inv}(G,k^\times):=\Z^2_{inv}(G,k^\times)/\d(\C^1_{inv}(G,k^\times)) \end{align}
\end{definition}~

\begin{lemma} There is a set theoretic map $\H^2_L(\DG^*) \rightarrow \H^2_{inv}(G);\sigma \mapsto \beta_{\sigma}$ defined by $$\beta_{\sigma}(g,h)=\sigma(g \times 1, h \times 1)$$ Further, it is a split of $\H^2_{inv}(G) \to \H^2_L(\DG^*)$ assigning a conjugation invariant $2$-cocycle $\beta$ on $G$ to $\sigma_{\beta}$ defined by $\sigma_{\beta}(g \times e_x, h \times e_y) = \beta(g,h)\epsilon(e_x)\epsilon(e_y)$. \\
\end{lemma}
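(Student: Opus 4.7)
The plan is to derive everything from the Hopf 2-cocycle identity for $\sigma \in \Z^2_L(\DG^*)$ applied to triples of the form $(g \times 1, h \times 1, k \times 1)$, combined with the explicit characterization of laziness in Lemma~\ref{DGlazy}. The key input is the Sweedler expansion
\[\Delta(g \times 1) = \sum_{y \in G} (g \times e_y) \otimes (y^{-1} g y \times 1),\]
from which the cocycle identity becomes a double sum over $y, z \in G$ of terms $\sigma(g \times e_y, h \times e_z)\,\sigma(y^{-1}gy\cdot z^{-1}hz \times 1,\; k \times 1)$. The vanishing half of Lemma~\ref{DGlazy} forces $gh = y^{-1}gy\cdot z^{-1}hz$ in every non-zero summand, so the second factor simplifies to $\beta_\sigma(gh,k)$, and summing then gives $\beta_\sigma(g,h)\beta_\sigma(gh,k)$. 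An identical argument on the other side yields $\beta_\sigma(h,k)\beta_\sigma(g,hk)$, which together give the group 2-cocycle identity. For conjugation invariance, rewrite $\beta_\sigma(g^t,h^t) = \sum_{x,y}\sigma(g^t \times e_x, h^t \times e_y)$ and apply the invariance half of Lemma~\ref{DGlazy} after the substitution $x \mapsto t^{-1}xt$, $y \mapsto t^{-1}yt$.

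For descent to cohomology, if $\sigma = \d\eta$ with $\eta \in \Reg^1_L(\DG^*)$, set $\alpha_\eta(g) := \eta(g \times 1) = \sum_y \eta(g \times e_y)$. The same relabeling trick shows $\alpha_\eta \in \C^1_{inv}(G,k^\times)$. Since $\eta(g \times e_y) = 0$ for $y \notin C_G(g)$ by the vanishing part of Lemma~\ref{DGlazy}, the sum defining $\beta_{\d\eta}(g,h)$ is restricted to $y \in C_G(g)$ and $z \in C_G(h)$; for such $y, z$ the elements $y^{-1}gy$ and $z^{-1}hz$ equal $g$ and $h$, so the computation collapses to $\alpha_\eta(g)\alpha_\eta(h)\alpha_\eta^{-1}(gh) = \d\alpha_\eta(g,h)$, as required.

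For the section, define $\sigma_\beta(g \times e_x, h \times e_y) := \beta(g,h)\,\delta_{x,1}\delta_{y,1}$. Laziness follows directly from Lemma~\ref{DGlazy}: $\sigma_\beta$ vanishes off $x = y = 1$, and on $x = y = 1$ the identity $\sigma_\beta(g^t \times e_1, h^t \times e_1) = \sigma_\beta(g \times e_1, h \times e_1)$ is exactly the conjugation invariance of $\beta$. For the 2-cocycle identity, expand both sides of the Hopf cocycle equation evaluated on $(a \times e_u, b \times e_v, c \times e_w)$ using $\Delta(a \times e_u) = \sum_{u_1 u_2 = u}(a \times e_{u_1}) \otimes (u_1^{-1}a u_1 \times e_{u_2})$; the delta functions force $u_1 = v_1 = 1$ on the left and $v_1 = w_1 = 1$ on the right, and after applying $e_\alpha e_{\alpha'} = \delta_{\alpha,\alpha'}e_\alpha$ both sides reduce to $\beta(a,b)\beta(ab,c) = \beta(b,c)\beta(a,bc)$. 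The splitting property is then immediate: $\beta_{\sigma_\beta}(g,h) = \sum_{x,y}\beta(g,h)\delta_{x,1}\delta_{y,1} = \beta(g,h)$. Well-definedness of $\beta \mapsto [\sigma_\beta]$ on cohomology follows by noting that for $\eta_0 \in \C^1_{inv}(G,k^\times)$ the cochain $\tilde\eta_0(g \times e_x) := \eta_0(g)\delta_{x,1}$ lies in $\Reg^1_L(\DG^*)$ and satisfies $\d\tilde\eta_0 = \sigma_{\d\eta_0}$, verified by a direct expansion of the same type.

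The main conceptual obstacle is that $t\colon kG \to \DG^*$, $g \mapsto g \times 1$, is only an algebra map and not a coalgebra map, so a priori the restriction of a general Hopf 2-cocycle along $t$ is only an element of $\Reg^2(G,k^\times)$ rather than $\Z^2(G,k^\times)$. Laziness is precisely the extra condition that rescues this: the vanishing and invariance statements of Lemma~\ref{DGlazy} are exactly what is needed to make the unwanted Sweedler terms produced by the non-coassociativity of $t$ collapse out.
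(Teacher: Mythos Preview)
Your argument for the group 2-cocycle identity, for conjugation invariance, and for the section $\beta\mapsto\sigma_\beta$ is essentially the paper's approach and is fine. However, there is a genuine gap: you never verify that $\beta_\sigma(g,h)\in k^\times$, i.e.\ that $\beta_\sigma(g,h)\neq 0$ for all $g,h$. The target is $\H^2_{inv}(G,k^\times)$, so this is not optional. In your final paragraph you assert that the restriction along $t$ lands ``a priori'' in $\Reg^2(G,k^\times)$, but this is precisely the point in question: since $t\colon kG\to\DG^*$ is not a coalgebra map, the convolution invertibility of $\sigma$ on $\DG^*$ does not transport to convolution invertibility of $\beta_\sigma$ on $kG$ (which, because $kG$ is group-like, amounts exactly to nowhere-vanishing). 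This is not automatic and needs a separate argument.

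The paper handles this as follows. One first shows $\beta_\sigma(g,g^{-1})\neq 0$ by computing
\[
1=(\sigma*\sigma^{-1})(g\times 1,\;g^{-1}\times 1)
=\sum_{x,y}\sigma(g\times e_x,\;g^{-1}\times e_y)\,\sigma^{-1}(g^x\times 1,\;(g^{-1})^y\times 1),
\]
using the vanishing half of Lemma~\ref{DGlazy} to restrict to pairs with $g^x(g^{-1})^y=1$, and then the invariance half to replace $\sigma^{-1}(g^x\times 1,(g^x)^{-1}\times 1)$ by $\sigma^{-1}(g\times 1,g^{-1}\times 1)$; the sum then collapses to $\beta_\sigma(g,g^{-1})\cdot\sigma^{-1}(g\times 1,g^{-1}\times 1)$. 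From $\beta_\sigma(g,g^{-1})\neq 0$ and the already-established cocycle identity $\beta_\sigma(g,g^{-1})=\beta_\sigma(g^{-1},h)\beta_\sigma(g,g^{-1}h)$ one then deduces $\beta_\sigma(g,h)\neq 0$ for arbitrary $g,h$. Once this is in place, your coboundary argument also goes through (nonvanishing of $\alpha_\eta(g)$ then follows from $\alpha_\eta(g)\alpha_\eta(g^{-1})=\beta_{\d\eta}(g,g^{-1})\alpha_\eta(1)$); without it, neither $\beta_\sigma$ nor $\alpha_\eta$ is known to take values in $k^\times$.
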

\begin{proof}
We apply the cocycle condition ($\ref{cocycle}$) for $a = g \times 1$, $b = h \times 1$ and $c = r \times 1$ for $g,h,r \in G$:  
\begin{align}
\sum_{t,s}\sigma(g \times e_t,h \times e_s)\sigma(g^th^s \times 1, r \times 1) &=  \sum_{gh = g^sh^t }\sigma(g \times e_t,h \times e_s)\sigma(gh \times 1, r \times 1) \\ &=  \beta_{\sigma}(g,h)\beta_{\sigma}(gh,r) \\
&=\sum_{s,z}\sigma(h \times e_s,r \times e_z)\sigma(g \times 1,h^sr^z \times 1) \\ &=\beta_{\sigma}(h,r)\beta_{\sigma}(g,hr)
\end{align}
where we have used Lemma \ref{DGlazy}. This shows that $\beta_{\sigma}$ is a $2$-cocycle with coefficients $k$. Now we argue that $\beta_{\sigma}(g,h) \neq 0$ for all $g,h \in G$. This is not clear because $g \times 1$ is not a group-like element in $\DG^*$. Let $\sigma^{-1}$ be the convolution inverse of $\sigma$ first we claim that $\beta_{\sigma}(g,g^{-1}) \neq 0$. Here we use again Lemma \ref{DGlazy}: 
\begin{align*}
1 &= \sigma*\sigma^{-1}(g \times 1, g^{-1} \times 1) = \sum_{x,y \in G}\sigma(g \times e_x, g^{-1} \times e_y)\sigma^{-1}(g^x \times 1, (g^{-1})^y \times 1) \\
   &= \sum_{\myover{x,y \in G}{1 = g^x(g^{-1})^y}}\sigma(g \times e_x, g^{-1} \times e_y)\sigma^{-1}(g^x \times 1, (g^{-1})^x \times 1) \\
	 &= \sum_{x,y \in G}\sigma(g \times e_x, g^{-1} \times e_y)\sigma^{-1}(g \times 1, (g^{-1}) \times 1) \\
	&=\sigma(g \times 1, g^{-1} \times 1)\sigma^{-1}(g \times 1, (g^{-1}) \times 1)
\end{align*}  
Using the cocycle condition: $\beta_{\sigma}(g,g^{-1}) = \beta_{\sigma}(g,g^{-1})\beta_{\sigma}(gg^{-1},h) = \beta_{\sigma}(g^{-1},h)\beta_{\sigma}(g,g^{-1}h)$ since $\beta(g,g^{-1})$ is invertible we also have that $\sigma(g \times 1,h \times 1)=\beta_{\sigma}(g,h) \neq 0$ for all $g,h \in G$. Since $\sigma$ is conjugation invariant so is $\beta_{\sigma}$. \\
If $\beta$ is a $2$-cocycle then it is obvious that $\sigma_{\beta}$ is a $2$-cocycle and it is lazy since it fulfills the conditions in Lemma $\ref{DGlazy}$. Also, the second part of Lemma \ref{DGlazy} implies that a lazy coboundary in $\DG^*$ is mapped to a conjugation invariant coboundary on $G$. \\   
\end{proof}

\begin{lemma}
There is a natural map $\H^2_L(\DG^*) \to \H^2_L(k^G);\sigma \mapsto \alpha_{\sigma}$ defined by $$\alpha_{\sigma}(e_x,e_y)=\sigma(1 \times e_x,1 \times e_y)$$ Further, $\alpha$ restricted to the character group $\widehat{G}$ gives a $2$-cocycle in $\H^2(\widehat{G},k^\times)$. 
\end{lemma}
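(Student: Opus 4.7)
The plan is to view the map as restriction of cocycles along the Hopf algebra inclusion $\iota\colon k^G\hookrightarrow \DG^*$ coming from the exact sequence (\ref{DGSeq}), and then further restriction to the group-like sub-Hopf-algebra generated by $\widehat{G}\subset k^G$. The key observation is that from the comultiplication $\Delta(1\times e_y)=\sum_{y_1y_2=y}(1\times e_{y_1})\otimes (1\times e_{y_2})$ in $\DG^*$, the embedding $\iota$ is indeed a Hopf subalgebra inclusion, so the cocycle identity (\ref{cocycle}) for $\sigma$ evaluated on triples $(1\times e_x,\,1\times e_y,\,1\times e_z)$ collapses exactly to the cocycle identity on $k^G$ for $\alpha_\sigma$. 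In particular $\alpha_\sigma\in \Z^2(k^G)$ is immediate, and convolution invertibility follows likewise by restricting $\sigma^{-1}$, because on the sub-Hopf-algebra $k^G$ the convolution product respects restriction.

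Next I would check laziness of $\alpha_\sigma$ by invoking Lemma \ref{DGlazy}. Setting $g=h=1$ there gives, for any $t\in G$,
\[ \sigma(1\times e_x,\,1\times e_y)=\sigma(1\times e_{x^t},\,1\times e_{y^t}), \]
which is precisely the lazy condition on $k^G$ recorded in Example~\ref{exm_LazyIff}. To see that the map descends to cohomology, I would take $\sigma=\d\eta$ with $\eta\in\Reg^1_L(\DG^*)$ and observe that $\alpha_\sigma=\d(\eta\circ\iota)$; the lazy characterisation of $\eta$ in Lemma \ref{DGlazy} at $g=1$ reads $\eta(1\times e_x)=\eta(1\times e_{x^t})$, which is exactly laziness of $\eta\circ\iota$ on $k^G$, so lazy coboundaries restrict to lazy coboundaries.

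For the second assertion, I would note that the characters $\chi\in \widehat{G}\subset k^G$, written as $\chi=\sum_g \chi(g)e_g$, are precisely the group-like elements of $k^G$, so $k\widehat{G}$ embeds as a commutative, cocommutative sub-Hopf-algebra of $k^G$. Restricting $\alpha_\sigma$ to $k\widehat{G}\otimes k\widehat{G}$ yields a $2$-cocycle on $k\widehat{G}$, and by the cocommutative case of Example \ref{exm_LazyIff} this is the same as an element of $\Z^2(\widehat{G},k^\times)$; passing to cohomology classes then gives the claimed element in $\H^2(\widehat{G},k^\times)$.

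The only mildly subtle point, and the main step to pay attention to, is the interaction of laziness with restriction: the condition in Lemma \ref{DGlazy} mixes the left and right tensor factors of $\DG^*=kG\cdot k^G$ via conjugation, so one must verify that restricting to the $k^G$ factor preserves precisely the conjugation invariance needed for laziness on $k^G$. The reduction $g=h=1$ in Lemma \ref{DGlazy} handles this cleanly, and the degenerate case $gh\neq g^xh^y$ does not appear since $1\cdot 1=1^x\cdot 1^y$ holds trivially, so no further bookkeeping is required.
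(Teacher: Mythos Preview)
Your proof is correct and follows essentially the same approach as the paper: restrict along the Hopf inclusion $\iota\colon k^G\hookrightarrow \DG^*$, check laziness by specialising Lemma~\ref{DGlazy} to $g=h=1$ and comparing with Example~\ref{exm_LazyIff}, and use that characters are group-like to land in $\Z^2(\widehat{G},k^\times)$. You are in fact slightly more thorough than the paper, which does not spell out that lazy coboundaries on $\DG^*$ restrict to lazy coboundaries on $k^G$; your observation that $\eta\in\Reg^1_L(\DG^*)$ evaluated at $g=1$ gives exactly the lazy condition on $k^G$ fills this in.
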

\begin{proof}
Since the map $\iota$ in sequence (\ref{DGSeq}) is Hopf it follows that $\alpha_{\sigma} \in \Z^2(k^G)$. Comparing Example \ref{exm_LazyIff} with Lemma \ref{DGlazy} its easy to see that $\alpha{\sigma} \in \Z^2_L(\DG^*)$. Also, elements $1 \times \chi$ for $\chi \in \hat{G}$ are group-like elements in $\DG^*$. Therefore the convolution invertibility of $\sigma$ implies that $\alpha_{\sigma}$ is indeed a $2$-cocycle in $\Z^2(\hat{G},k^{\times})$. \\      
\end{proof}

\begin{definition}
The group of \emph{lazy bialgebra pairings} $\P_L(kG,k^G)$ consists of convolution invertible $k$-linear maps $\lambda:kG \otimes k^G \rightarrow k$ such that for all $x,t \in G$, $f \in k^G$: 
\begin{equation*}
\begin{split}
\lambda(gh,f) = \lambda(g,f_{1})\lambda(h,f_{2}) &\hspace{0.5cm} \lambda(g,f*f') = \lambda(g,f)\lambda(g,f') \hspace{0.5cm} \lambda(g,e_x) = \lambda(tgt^{-1},e_{txt^{-1}})  \\
&\lambda(1,f) = \epsilon_{k^G}(f) \qquad \lambda(g,1) = \epsilon_{kG}(g) 
\end{split}
\end{equation*}
\end{definition}

\medskip

Bialgebra pairings $\P(kG,k^G)$ are in bijection with the group $\Hom(G,G)$, similarly lazy bialgebra pairings $\P_L(kG,k^G)$ are in bijection with group homomorphisms $f \in \Hom(G,G)$ that are conjugation invariant $f(g)= f(g^t) \forall g,t \in G$.  

\begin{lemma} There is a group homomorphism $\pi: \H^2_L(\DG^*) \rightarrow \P_L(kG,k^G); \sigma \mapsto \lambda_\sigma$ defined by
\begin{align}\label{lambdasigma}
\lambda_\sigma(g,f) &= \sigma^{-1}((1 \times f)_1,(g \times 1)_1)\sigma((g\times 1)_2,(1 \times f)_2) \\
										&= \sum_{t,x,y \in G}f(xy)\sigma^{-1}(1 \times e_{x}, g \times e_t)\sigma(t^{-1}gt \times 1,1 \times e_y)
\end{align} 
\end{lemma}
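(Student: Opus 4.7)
The plan is to verify four points in sequence: that the two displayed formulas for $\lambda_\sigma$ agree, that $\lambda_\sigma$ is a lazy bialgebra pairing, that the assignment descends to $\H^2_L(\DG^*)$, and that it is a group homomorphism. The first point is a direct calculation using the explicit coproducts
\begin{align*}
\Delta(g\times 1)&=\sum_{t\in G}(g\times e_t)\otimes(t^{-1}gt\times 1),\\
\Delta(1\times f)&=\sum_{x,y\in G}f(xy)(1\times e_x)\otimes(1\times e_y),
\end{align*}
which follow immediately from the coproduct $\Delta(x\times e_y)=\sum_{y_1y_2=y}(x\times e_{y_1})\otimes(y_1^{-1}xy_1\times e_{y_2})$ recalled in the preliminaries. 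The remaining arguments then reduce to careful manipulations of the $2$-cocycle identity (\ref{cocycle}) combined with Lemma \ref{DGlazy}.

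For the pairing axioms: the normalizations $\lambda_\sigma(1,f)=\epsilon(f)$ and $\lambda_\sigma(g,1)=\epsilon(g)$ and the convolution invertibility (with inverse obtained by swapping the roles of $\sigma$ and $\sigma^{-1}$) follow from standard identities for normalized convolution-invertible $\sigma$. For multiplicativity in the second slot, $\lambda_\sigma(g,ff')=\lambda_\sigma(g,f)\lambda_\sigma(g,f')$, I would apply the cocycle identity (\ref{cocycle}) to the triple $(1\times f,\,1\times f',\,g\times 1)$ and its mirror, using laziness of $\sigma$ (Lemma \ref{DGlazy}) to move the inner factor $g\times 1$ past $1\times f,1\times f'$. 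For multiplicativity in the first slot, $\lambda_\sigma(gh,f)=\lambda_\sigma(g,f_1)\lambda_\sigma(h,f_2)$, the argument is analogous, applying the cocycle identity to $(g\times 1,\,h\times 1,\,1\times f)$; here the coproduct of $g\times 1$ produces the conjugated factor $t^{-1}gt$ which, when composed with the coproduct of $h\times 1$, must be matched with $\Delta(gh\times 1)$ using Lemma \ref{DGlazy}. Laziness of $\lambda_\sigma$, i.e. $\lambda_\sigma(g,e_x)=\lambda_\sigma(tgt^{-1},e_{txt^{-1}})$, is then immediate term by term from the conjugation invariance of $\sigma$ supplied by Lemma \ref{DGlazy} applied to each of the two factors in the defining formula.

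For well-definedness on cohomology, if $\sigma'=\sigma*\d\eta$ for $\eta\in\Reg^1_L(\DG^*)$, I would expand $\d\eta(a,b)=\eta(a_1)\eta(b_1)\eta^{-1}(a_2b_2)$ in the formula for $\lambda_{\sigma'}$ and use that $\eta$ is convolution central on $\DG^*$ (the laziness $\eta*\id=\id*\eta$) so that the $\eta$-contributions telescope and leave $\lambda_{\sigma'}=\lambda_\sigma$. Finally, the group homomorphism property $\lambda_{\sigma*\tau}=\lambda_\sigma*\lambda_\tau$ in $\P_L(kG,k^G)$ follows by expanding $(\sigma*\tau)^{-1}=\tau^{-1}*\sigma^{-1}$ in the defining formula, applying coassociativity, and invoking laziness of $\sigma$ and $\tau$ to rearrange the middle factors into the desired convolution product. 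The main technical obstacle is the bookkeeping for the first-slot multiplicativity: the twist $t^{-1}gt$ appearing in the coproduct of $g\times 1$ forces a delicate rewriting of a double sum where only the combination of the cocycle identity with the support and invariance conditions of Lemma \ref{DGlazy} yields the desired cancellation.
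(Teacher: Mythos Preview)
Your plan is correct and is essentially the paper's approach: both rely on combining the $2$-cocycle identity with the laziness condition (Lemma~\ref{DGlazy}), exploiting that all coproduct terms of $1\times f$ lie in $1\times k^G$ and hence commute with the coproduct terms of $g\times 1$ in $\DG^*$.

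The one organizational difference worth noting is that the paper packages your ``apply the cocycle identity to specific triples'' step into a single abstract lemma. It defines, for any Hopf algebra $H$ and any lazy $\sigma\in\Z^2_L(H)$, the map
\[
\tau(a,b):=\sigma^{-1}(b_1,a_1)\,\sigma(a_2,b_2),
\]
and proves once and for all that $\tau(ab,c)=\tau(a,c_1)\tau(b,c_2)$ whenever all coproduct terms of $a$ (resp.\ $b$) commute with those of $c$; the mirror identity $\tau(c,ab)=\tau(c_1,b)\tau(c_2,a)$ then follows from $\tau(a,b)=\tau^{-1}(b,a)$. Your two separate arguments for the two multiplicativities are subsumed by this single computation, and the well-definedness on cohomology (which in fact holds for arbitrary, not just lazy, $\mu$) drops out from the same commutativity hypothesis. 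This buys a cleaner bookkeeping and avoids the ``delicate rewriting of a double sum'' you anticipate for first-slot multiplicativity: once the abstract $\tau$-identity is in hand, both pairing axioms follow in two lines by specializing $a,b,c$ and then collapsing the remaining Sweedler sum over $t$ via the conjugation invariance in Lemma~\ref{DGlazy}.
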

\begin{proof} Recall the coproduct in $\DG^*$: $\Delta(g \times 1) = \sum_{t}(g \times e_t) \otimes (t^{-1}gt\times 1)$. \\ 
We check that $\pi$ is a well-defined group homomorphism. It is more convenient to be slightly more general here. \\ 

\noindent
Let $a,b,c \in H$ for a Hopf algebra $H$ and $\sigma \in \Z^2_L(H)$. Consider the following map $$ \tau:H\times H \to k; \quad \tau(a,b):=\sigma^{-1}(b_1,a_1)\sigma(a_2,b_2)$$ The laziness condition $\sigma(b_1,c_1)b_2c_2 = b_1c_1 \sigma(b_2,c_2)$ implies that we can commute terms like as the following 
\begin{align}\label{proofeq1}
\sigma(b_1,c_1)\sigma(a,b_2c_2) &= \sigma(a,b_1c_1)\sigma(b_2,c_2) 
\end{align} 

\noindent
Using the above formula and the $2$-cocycle condition two times 
\begin{align} 
\sigma(c,ab)&=\sigma^{-1}(a_1,b_1)\sigma(c_1,a_2)\sigma(c_2a_3,b_2) \label{proofcs1} \\
\sigma^{-1}(ab,c)&=\sigma^{-1}(a_1,b_1c_1)\sigma^{-1}(b_2,c_2)\sigma(a_2,b_3) \label{proofcs2}
\end{align}
Now we can show that $\tau$ is multiplicative in the first argument: 
\begin{align*}
\tau(a_1,c_1)\tau(b_1,c_2)&\tau^{-1}(a_2b_2,c_3) = \sigma^{-1}(c_1,a_1)\sigma(a_2,c_2)\sigma^{-1}(c_3,b_1)\sigma(b_2,c_4)\sigma^{-1}(a_3b_3,c_5)\sigma(c_6,a_4b_4) \\
&\stackrel{(\ref{proofcs1}),(\ref{proofcs2})}{=} \sigma^{-1}(c_1,a_1)\sigma(a_2,c_2)\sigma^{-1}(c_3,b_1)\sigma(b_2,c_4) \\ & \qquad\qquad \sigma^{-1}(a_3,b_3c_5)\sigma^{-1}(b_4,c_6)\sigma(a_4,b_5)\sigma^{-1}(a_5,b_6)\sigma(c_7,a_6)\sigma(c_8a_7,b_7) \\
&\stackrel{(\ref{proofeq1})}{=} \sigma^{-1}(c_1,a_1)\sigma(a_2,c_2)\sigma^{-1}(c_3,b_1)\sigma^{-1}(a_3,b_2c_4)\sigma(c_5a_4,b_3)\sigma(c_6,a_5)  \\
\end{align*}    

If all coproduct terms $a_i$ and $c_j$ commute and if all the $b_i$ and $c_l$ commute we go on:  
\begin{align*}
&= \sigma^{-1}(c_1,a_1)\underbrace{\sigma(a_2,c_2)\sigma^{-1}(c_3,b_1)\sigma^{-1}(a_3,c_4b_2)\sigma(a_4c_5,b_3)}_{\epsilon}\sigma^{-1}(c_6,a_5) = \epsilon(abc) 
\end{align*}
Where we again used the cocycle condition in the middle. Note that $\tau(a,b) = \tau^{-1}(b,a)$ hence the above shown property of $\tau$ implies: 
$$ \tau(c,ab) = \tau(c_1,b)\tau(c_2,a)$$ Taking $H=\DG^*$, $\sigma \in Z_L^2(\DG^*)$, we get $\tau(g \times 1,1 \times e_x) = \lambda_\sigma(g,e_x)$ for all $g,x \in G$ (compare with equation (\ref{lambdasigma})). Note that the coproduct terms of $(1 \times e_x)$ and $(1 \times e_y)$ in $\DG^*$ stay in $1 \times k^G$ and therefore the coproduct terms of $(g \times 1)$ commute with all coproduct terms $(1 \times e_x)$ and $(1 \times e_y)$. Using this we show that $\lambda_\sigma$ is multiplicative:  

\begin{align*}
\lambda_\sigma(g,e_x*e_y) &= \tau(g \times 1, (1 \times e_x)(1 \times e_y)) = \tau((g\times 1)_2 , 1 \times e_{x})\tau((g\times 1)_1 , 1 \times e_y) \\
                          &= \sum_{t \in G} \tau(g \times e_t, 1 \times e_{y})\tau(t^{-1}gt \times 1, 1 \times e_x) \\
													&= \tau(g \times 1, 1 \times e_{y})\tau(g \times 1, 1 \times e_x) \\
													&= \lambda_{\sigma}(g,e_x)\lambda_{\sigma}(g,e_y)
\end{align*}
Similarly: 
\begin{align*}
\lambda_\sigma(gh,e_x) &= \tau((g \times 1)(h\times 1), 1 \times e_x) = \sum_{x_1x_2=x} \tau(g \times 1, 1 \times e_{x_1})\tau(h \times 1, 1 \times e_{x_2}) \\
&= \sum_{x_1x_2=x}\lambda_{\sigma}(g,e_{x_1})\lambda_{\sigma}(h,e_{x_2})
\end{align*}

The map also induces a well-defined map on cohomology, since for any $a,b \in H$ 
\begin{align*} 
(\d\mu)^{-1}(b_1,a_1)\d\mu(a_2,b_2) = \mu(b_1a_1)\mu^{-1}(b_2)\mu^{-1}(a_2)\mu(a_3)\mu(b_3)\mu^{-1}(a_4b_4) = \mu(b_1a_1)\mu^{-1}(a_2b_2)												
\end{align*}
and therefore $(\d\mu)^{-1}(b_1,a_1)\d\mu(a_2,b_2) = \epsilon(ab)$ if $a,b$ commute. \\
\end{proof} 


\begin{proposition}~
\begin{itemize}
\item Let $\P_{c}(kG,k^G) \subset \P_L(kG,k^G)$ be the subgroup of central lazy bialgebra pairings. These are $\lambda \in \P_L(kG,k^G)$ such that for $g \in G$: $\lambda(g,e_x)=0$ if $x$ not in $Z(G)$. Then there is a natural group homomorphism 
\begin{equation}\label{embpair}
\P_c(kG,k^G) \rightarrow \H^2_L(\DG^*); \lambda \mapsto \sigma_\lambda = \lambda \circ (p \otimes s) 
\end{equation}
where $p,s$ were defined in the splitting sequence $(\ref{DGSeq})$. Note that $\P_c(kG,k^G)$ is in bijection with $\Hom(G,Z(G))$. 
\item Let $\Z_c^2(k^G) \subset \Z^2_L(k^G)$ be the subgroup of central lazy $2$-cocycles. These are $\alpha \in \Z^2_L(k^G)$: $\alpha(e_x,e_y)=0$ if $x$ or $y$ not in $Z(G)$. We define $\H^2_c(k^G)$ to be the quotient by central coboundaries $\d\eta \in \Z_c^2(k^G)$ for $\eta \in \Reg^1(k^G)$. Then there is a natural group homomorphism $\H^2_c(k^G) \to \H^2_L(\DG^*); \alpha \mapsto \sigma_{\alpha}$ defined by
\begin{align*}
\sigma_{\alpha}(g \times e_x,h \times e_y) = \alpha(e_x,e_y) \epsilon(g)\epsilon(h)
\end{align*}  
\end{itemize}
\end{proposition}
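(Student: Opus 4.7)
The plan is to verify both assignments using a common observation: the projection $p: \DG^* \to kG$ and the splitting $s: \DG^* \to k^G$ of the sequence (\ref{DGSeq}) are both Hopf algebra maps, as one checks directly from the formulas for multiplication and comultiplication in $\DG^*$. The rest proceeds by a direct calculation and repeated use of Lemma \ref{DGlazy}.

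For the first item, I would first make $\sigma_\lambda$ concrete. Under the bijection $\P_c(kG,k^G) \cong \Hom(G,Z(G))$ sending $\lambda \leftrightarrow \phi$ via $\lambda(g,e_x) = \delta_{x,\phi(g)}$, the formula reduces to $\sigma_\lambda(g \times e_x, h \times e_y) = \delta_{x,1}\delta_{y,\phi(g)}$. Next, I would verify the 2-cocycle identity by substituting the coproduct $\Delta(g\times e_x) = \sum_{x_1x_2=x}(g\times e_{x_1})\otimes(x_1^{-1}gx_1 \times e_{x_2})$ into both sides: the factor $\delta_{x_1,1}$ collapses each sum, and the centrality $\phi(g) \in Z(G)$ trivialises the internal conjugations in the coproduct, so that both sides reduce to $\delta_{x,1}\delta_{y,\phi(g)}\delta_{z,\phi(gh)}$. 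Laziness follows from Lemma \ref{DGlazy}: nonvanishing forces $x=1$ and $y=\phi(g)\in Z(G)$, hence $g^x h^y = gh$, and simultaneous-conjugation invariance is the lazy property of $\phi$. Finally $\sigma_\lambda * \sigma_{\lambda'} = \sigma_{\lambda*\lambda'}$ is immediate from $p,s$ being coalgebra maps (pullback commutes with convolution).

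For the second item the setup is formally simpler: since $s$ is a Hopf algebra map, $\sigma_\alpha = \alpha \circ (s \otimes s)$ is a 2-cocycle for free, and the assignment $\alpha \mapsto \sigma_\alpha$ is automatically a homomorphism. Laziness combines Example \ref{exm_LazyIff} with Lemma \ref{DGlazy}: the central support of $\alpha$ forces $x,y \in Z(G)$ whenever $\sigma_\alpha(g \times e_x, h \times e_y)$ is nonzero, which gives $g^x h^y = gh$; and the other laziness condition translates directly to $\alpha(e_x,e_y) = \alpha(e_{gxg^{-1}},e_{gyg^{-1}})$, the known lazy condition on $k^G$.

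The delicate step, which I would tackle last, is the descent to cohomology in the second item. For $\alpha = \d\eta$ with $\d\eta \in \Z^2_c(k^G)$, the natural candidate $\tilde\eta := \eta\circ s$ satisfies $\d\tilde\eta = \sigma_\alpha$ by a short computation using the commutativity of $k^G$, but lies in $\Reg^1_L(\DG^*)$ only if $\eta$ is already supported on $Z(G)$, a condition strictly stronger than centrality of $\d\eta$. The plan is to exploit the ambiguity $\eta \sim \eta * \chi$ for algebra characters $\chi: k^G \to k$ (each with trivial $\d$-image, indexed by evaluation at points of $G$) so as to produce a representative with central support, up to a lazy coboundary on $\DG^*$. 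This is the main obstacle, and is of the same flavour as the failures of functoriality for lazy cohomology flagged in the introduction.
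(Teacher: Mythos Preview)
Your verifications of the 2-cocycle condition and the laziness criterion are correct and parallel the paper's argument closely. For the first item the paper works directly with the pairing axioms for $\lambda$ (reducing the cocycle identity to $\lambda(x,e_y)\lambda(xy^{-1}zy,e_w)=\sum_{w_1w_2=w}\lambda(z,e_{w_1})\lambda(x,e_y*e_{w_2})$ and checking this from multiplicativity of $\lambda$) rather than passing to the explicit $\phi\in\Hom(G,Z(G))$, but your concrete reduction is equivalent and arguably cleaner. For the second item your observation that $s$ is a Hopf algebra map, so that $\sigma_\alpha=\alpha\circ(s\otimes s)$ is automatically a 2-cocycle, makes explicit what the paper only sketches in one line.

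The descent to cohomology in the second item is a genuine subtlety, and you are right to isolate it: the paper's proof does not address this step at all. However, your proposed fix via convolution with algebra characters cannot succeed as stated. The algebra characters of $k^G$ are the evaluation maps $\chi_g$, and one computes $(\eta*\chi_g)(e_x)=\eta(e_{xg^{-1}})$, so convolving with $\chi_g$ merely translates the support of $\eta$ by a single group element. Unless $\eta$ is already supported on one $Z(G)$-coset, no such translation produces central support; you do not get enough freedom to kill $\eta$ off $Z(G)$ conjugacy-class by conjugacy-class. Either a genuinely different $\tilde\eta\in\Reg^1_L(\DG^*)$ must be exhibited, or the definition of $\H^2_c(k^G)$ should be read as quotienting only by those $\d\eta$ with $\eta$ itself centrally supported, in which case $\tilde\eta=\eta\circ s$ is lazy on the nose and the descent is immediate.
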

\begin{proof}
The cocycle condition (equation (\ref{cocycle})) for $\sigma_{\lambda}$ reduces to
$$ \lambda(x,e_y)\lambda(xy^{-1}zy,e_{w}) = \sum_{w_1w_2=w}\lambda(z,e_{w_1})\lambda(x,e_y*e_{w_2})$$ for all $x,y,z,w \in G$. Using the properties of the pairing $\lambda$ we check this equality:
\begin{align*}
\lambda(x,e_y)\lambda(xy^{-1}zy,e_{w}) &=  \sum_{w_1w_2=w}\lambda(x,e_y)\lambda(x,e_{w_1})\lambda(y^{-1}zy,e_{w_2}) \\ 
&= \sum_{w_1w_2=w}\lambda(x,e_y*e_{w_1})\lambda(y^{-1}zy,e_{w_2}) \\
&= \lambda(x,e_{y})\lambda(y^{-1}zy,e_{y^{-1}w}) = \lambda(x,e_{y})\lambda(z,e_{wy^{-1}}) \\
&=\sum_{w_1w_2=w} \lambda(z,e_{w_1})\lambda(x,e_y*e_{w_2})
\end{align*} 
The fact that $\lambda(g,e_x)$ is conjugation invariant and zero if $x$ is not central ensures that $\sigma_{\lambda}$ is lazy in $\Z^2(\DG^*)$ (see again Lemma \ref{DGlazy}). The fact that $\lambda \mapsto \sigma_{\lambda}$ is a homomorphism is straightforward to check. Similarly, since $\alpha$ is lazy on $k^G$ this implies conjugation invariance and since $\alpha$ is central Lemma \ref{DGlazy} implies that $\sigma_{\alpha}$ is lazy on $\DG^*$.  \\
\end{proof}

\begin{conjecture}
The group $\H^2_L(\DG^*)$ is generated by $\H^2_c(k^G)$, $\P_c(kG,k^G)$ and $\H_{inv}^2(G,k^\times)$. Further, there is an exact factorization: $\H^2_L(\DG^*) = \H^2_c(k^G) \P_c(kG,k^G) \H_{inv}^2(G,k^\times)$. \\
\end{conjecture}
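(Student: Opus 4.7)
The strategy is a triangular decomposition argument combining the three restriction maps $\sigma \mapsto \beta_\sigma$, $\sigma \mapsto \lambda_\sigma$, $\sigma \mapsto \alpha_\sigma$ with the three inclusions $\beta \mapsto \sigma_\beta$, $\lambda \mapsto \sigma_\lambda$, $\alpha \mapsto \sigma_\alpha$ already constructed in this section. As a preparatory step I would record the full table of compositions of the inclusions and restrictions. The diagonal entries give $\beta_{\sigma_\beta} = \beta$, $\lambda_{\sigma_\lambda} = \lambda$, $\alpha_{\sigma_\alpha} = \alpha$ by direct computation with the explicit formulas, and a parallel calculation for the off-diagonal compositions (e.g.\ $\beta_{\sigma_\alpha}$, $\beta_{\sigma_\lambda}$, $\lambda_{\sigma_\alpha}$) should show that these are either identically trivial as classes or can be absorbed into the diagonal entries via a conjugation-invariant $1$-coboundary. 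Arranged with the order of factors $\sigma_\alpha * \sigma_\lambda * \sigma_\beta$, this gives a triangular system which already implies the uniqueness half of the exact factorization: if such a product is trivial in $\H^2_L(\DG^*)$, then applying the $\beta$-, then $\lambda$-, then $\alpha$-restriction in sequence forces each factor to be trivial.

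For existence, given $[\sigma] \in \H^2_L(\DG^*)$ I would iteratively peel off three invariants. Set $\beta := \beta_\sigma$ and replace $\sigma$ by $\sigma^{(1)} := \sigma * \sigma_\beta^{-1}$, so that $\beta_{\sigma^{(1)}} = 0$. Next verify that $\lambda_{\sigma^{(1)}}$ lands in the central subgroup $\P_c(kG,k^G) \subset \P_L(kG,k^G)$ and not merely in the ambient lazy pairings; this should follow from the $2$-cocycle identity applied to $\sigma^{(1)}$ together with the vanishing of $\beta_{\sigma^{(1)}}$, which kills precisely the non-central contributions to the pairing via Lemma \ref{DGlazy}. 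Set $\lambda := \lambda_{\sigma^{(1)}}$ and form $\sigma^{(2)} := \sigma^{(1)} * \sigma_\lambda^{-1}$. Repeat for $\alpha := \alpha_{\sigma^{(2)}}$, verifying centrality of $\alpha$ by the same mechanism, and form $\sigma^{(3)} := \sigma^{(2)} * \sigma_\alpha^{-1}$. By construction $\sigma^{(3)}$ lies in the common kernel of all three restriction maps, and Lemma \ref{lm_inKernelExact} yields $[\sigma^{(3)}] = 0$ in $\H^2(\DG^*)$.

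The main obstacle is the final step: promoting triviality in $\H^2(\DG^*)$ to triviality in $\H^2_L(\DG^*)$. The gap comes from regular $1$-cochains $\eta \in \Reg^1(\DG^*) \setminus \Reg^1_L(\DG^*)$ whose differentials $\d\eta$ happen to be lazy and can therefore represent nontrivial classes in the lazy cohomology. To control this I would attempt to show that on the common kernel of the three restriction maps a trivializing $\eta$ can be chosen almost lazy in the sense of $\Reg^1_{aL}$, or even lazy after averaging over $G$-conjugation orbits. Lemma \ref{lm_symmetric} is encouraging here: it says the symmetric part of $\sigma^{(3)}$ is cohomologous to an element in the image of $\Z^2_{inv}(G,k^\times)$, and combining this with the vanishing of $\beta_{\sigma^{(3)}}$ should force the symmetric part to already be trivial, reducing the problem to an antisymmetric contribution that is easier to trivialize by a lazy cochain. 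If this final promotion step fails, one can still expect a factorization of $\H^2_L(\DG^*)$ modulo the controlled quotient $\Reg^1_{aL}/\Reg^1_L$ of defect coboundaries; sharpening this to the full conjectured equality is where the statement is truly nontrivial and where the authors' partial results fall short.
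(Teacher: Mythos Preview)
The statement you are attempting to prove is stated in the paper as a \emph{conjecture}; the authors do not give a proof. The surrounding lemmas (in particular Lemma \ref{lm_inKernelExact} and Lemma \ref{lm_symmetric}) are explicitly presented as partial results toward it, and the authors flag in the introduction that certain arrows in their decomposition diagram are missing. So there is no ``paper's own proof'' to compare against; any correct argument would be new.

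Your outline follows the natural strategy the paper sets up, and you correctly isolate the main obstruction at the end: Lemma \ref{lm_inKernelExact} only gives triviality in $\H^2(\DG^*)$, not in $\H^2_L(\DG^*)$, because the trivializing cochain $\mu*(\eta\otimes\nu)$ constructed there need not be lazy. Your suggestion to average over conjugation orbits or to invoke the symmetric reduction of Lemma \ref{lm_symmetric} is plausible but is exactly where the paper stops; neither you nor the authors close this gap.

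There are, however, two earlier gaps you pass over too quickly. First, the restriction $\sigma\mapsto\lambda_\sigma$ lands a priori only in $\P_L(kG,k^G)$, while the section $\lambda\mapsto\sigma_\lambda$ is defined only on the strictly smaller subgroup $\P_c(kG,k^G)$ of \emph{central} pairings. Your claim that $\lambda_{\sigma^{(1)}}$ is automatically central once $\beta_{\sigma^{(1)}}$ vanishes is asserted without argument, and it is not obvious from Lemma \ref{DGlazy} alone. Second, the same mismatch occurs for $\alpha$: the restriction lands in $\H^2_L(k^G)$ but the section is only defined on the central subgroup $\H^2_c(k^G)$. These centrality verifications are precisely the dashed arrows the authors could not establish. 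Finally, note that the map $\sigma\mapsto\beta_\sigma$ is described in the paper only as a set-theoretic map, not a group homomorphism, so your first peeling step $\sigma^{(1)}=\sigma*\sigma_\beta^{-1}$ already requires checking that $\beta_{\sigma^{(1)}}$ is cohomologically trivial, which is not automatic from a non-multiplicative splitting.
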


\begin{conjecture}
The set $\H^2(\DG^*)$ is in bijection with $\H^2(k^G) \times \P(kG,k^G) \times \H^2(G,k^\times)$. \\   
\end{conjecture}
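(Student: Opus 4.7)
The plan is to prove the conjectured bijection by constructing three natural restriction maps $\H^2(\DG^*)\to \H^2(k^G)$, $\H^2(\DG^*)\to \P(kG,k^G)$, $\H^2(\DG^*)\to \H^2(G,k^\times)$ that extend the partial constructions of this section beyond the lazy setting, and then showing that the combined map is a set-theoretic bijection with explicit inverse given by the product of the three inclusion maps $\sigma_\alpha,\sigma_\lambda,\sigma_\beta$. The three candidate restrictions are the natural formulas $\alpha_\sigma(e_x,e_y)=\sigma(1\times e_x,1\times e_y)$, $\beta_\sigma(g,h)=\sigma(g\times 1,h\times 1)$, and $\lambda_\sigma$ as in (\ref{lambdasigma}). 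The first is unproblematic: it comes from restriction along the Hopf subalgebra inclusion $k^G\hookrightarrow \DG^*$ and is manifestly a well-defined $2$-cocycle on $k^G$.

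To establish surjectivity, the plan is to extend the maps $\sigma_\alpha,\sigma_\lambda,\sigma_\beta$ from the lazy/central subgroups used in the paper to the full groups $\H^2(k^G)$, $\P(kG,k^G)$, $\H^2(G,k^\times)$; the defining formulas remain meaningful without the central or conjugation-invariant hypotheses, although the images will no longer be lazy. Given an arbitrary triple $(\alpha,\lambda,\beta)$, one then forms the convolution product $\sigma_\alpha*\sigma_\lambda*\sigma_\beta$ in a fixed order (after a suitable normalization), verifies it is a $\DG^*$-cocycle by direct calculation, and checks that the three restrictions recover the original triple. For injectivity, the idea is a Hochschild--Serre style argument for the Hopf exact sequence $k^G\hookrightarrow\DG^*\twoheadrightarrow kG$: starting from a $\sigma$ whose three restrictions all vanish, one peels off the $k^G$-part via a coboundary, reducing to a cocycle trivial on $1\times k^G$, and then exploits the vanishing of $\lambda_\sigma$ and $\beta_\sigma$ to remove the mixed and the $kG$-parts in succession. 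This mirrors the Kac--Schauenburg proof for $\H^2(kG\otimes k^G)$, deformed by the Drinfeld twist $J$ that relates $kG\otimes k^G$ to $\DG^*$; since $J$ does not affect the underlying algebra structure of either factor, only a careful bookkeeping of coproduct terms is needed.

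The main obstacle is that, as the paper notes for the lazy case in the discussion of the dashed arrows, the three restriction maps are not manifestly well-defined for arbitrary non-lazy $\sigma$: the derivations in Lemma \ref{DGlazy} of, e.g., $\beta_\sigma$ being a cocycle on $G$ use the laziness hypothesis crucially to collapse the sums over the coproduct of $1\times k^G$. Without laziness, $\beta_\sigma(g,h)=\sigma(g\times 1,h\times 1)$ need not satisfy the group cocycle identity on the nose, and similarly $\lambda_\sigma$ may fail to be bialgebra-compatible. The key technical lemma required is that every class $[\sigma]\in\H^2(\DG^*)$ admits a \emph{normal form} representative, obtained by multiplying $\sigma$ by a coboundary $\d\eta$ for an appropriate $\eta\in\Reg^1(\DG^*)$, such that $\sigma(g\times e_x,h\times e_y)$ vanishes unless the support condition of Lemma \ref{DGlazy} holds. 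Once such a normalization is in hand, the restrictions collapse just as in the lazy case, and the rest of the argument proceeds by direct cocycle calculation and by tracking the modifications to $J$-coboundaries. Establishing the existence and (approximate) uniqueness of this normal form, together with its compatibility with the coboundary relation on $\DG^*$, is the central technical difficulty and accounts for the authors leaving the statement as a conjecture.
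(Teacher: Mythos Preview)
The statement is a \emph{conjecture} in the paper; there is no proof in the paper to compare against. The authors explicitly leave it open, and the surrounding discussion (the diagram with dashed arrows, the remarks preceding Lemma~\ref{lm_inKernelExact}) records exactly the obstacles you describe: the top-to-bottom restriction maps $\sigma\mapsto\beta_\sigma$ and $\sigma\mapsto\lambda_\sigma$ are only established under the laziness hypothesis, because the derivations rely on Lemma~\ref{DGlazy} to collapse the coproduct sums, and the bottom-to-top maps are only defined on the central/invariant subgroups.

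Your proposal is not a proof but a programme, and you say so yourself in the final paragraph. The key missing ingredient you isolate --- a normal-form lemma producing, for every class in $\H^2(\DG^*)$, a representative satisfying the support condition of Lemma~\ref{DGlazy} --- is precisely what the paper does not supply and what would be needed to make the argument go through. Your outline of surjectivity via $\sigma_\alpha*\sigma_\lambda*\sigma_\beta$ and injectivity via a Hochschild--Serre style peeling is the natural strategy and is consistent with what the paper achieves in the lazy case (Lemma~\ref{lm_inKernelExact}); but absent the normal-form lemma, neither the well-definedness of $\beta_\sigma$ as a group $2$-cocycle nor the bimultiplicativity of $\lambda_\sigma$ is available for general $\sigma$, so the restriction side of the bijection is not defined. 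In short: your diagnosis of the difficulty matches the paper's, and neither you nor the paper resolves it.
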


\begin{lemma}\label{lm_inKernelExact}
A lazy $2$-cocycle $\sigma$ such that $\beta_{\sigma}$ cohomologically trivial in $\H^2(G,k^\times)$, $\alpha_{\sigma}$ cohomologically trivial in $\Z^2(k^G)$ and $\lambda(g,e_x)=\epsilon(e_x)$ fulfills the property that it is cohomologically trivial in $\H^2(DG^*)$ but not necessarily in $\H^2_L(\DG^*)$. \\
\end{lemma}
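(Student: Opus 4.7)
The plan is to construct an explicit 1-cochain $\eta \in \Reg^1(\DG^*)$, generally not lazy, satisfying $\sigma = \d\eta$; the non-lazy nature of $\eta$ explains why the conclusion only holds in $\H^2(\DG^*)$ and not in $\H^2_L(\DG^*)$. I exploit the two Hopf algebra maps of $\DG^*$: the projection $p\colon \DG^* \twoheadrightarrow kG,\ g\times e_x \mapsto \epsilon(e_x)\,g$, and the splitting $s\colon \DG^* \twoheadrightarrow k^G,\ g\times e_x \mapsto e_x$. Since both are Hopf algebra maps, pullback of 1-cochains along them sends coboundaries to coboundaries in closed form. By hypothesis, choose witnesses $\eta_0 \in \Reg^1(k^G)$ with $\d\eta_0 = \alpha_\sigma$ and $\mu \in \C^1_{inv}(G, k^\times)$ with $\d\mu = \beta_\sigma$, and set $\eta := (\eta_0 \circ s) * (\mu \circ p) \in \Reg^1(\DG^*)$. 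A direct computation with the twisted coproduct of $\DG^*$, using conjugation invariance of $\mu$, yields the pointwise formula $\eta(g\times e_x) = \mu(g)\,\eta_0(e_x)$ with convolution inverse $\eta^{-1}(g\times e_x) = \mu^{-1}(g)\,\eta_0^{-1}(e_x)$.

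Expanding $\d\eta$ on basis elements and using conjugation invariance of $\mu$ to collapse the sum over conjugates, one obtains
\[
\d\eta(g\times e_x,\,h\times e_y) \;=\; \mu(g)\mu(h)\,\mu^{-1}\!\bigl(x^{-1}gx \cdot y^{-1}hy\bigr)\,\alpha_\sigma(e_x,e_y).
\]
Setting $\sigma' := \sigma * (\d\eta)^{-1}$, I claim $\sigma'$ satisfies the three literal-triviality conditions $\alpha_{\sigma'}(e_x,e_y) = \delta_{x,1}\delta_{y,1}$, $\beta_{\sigma'}(g,h) = 1$, and $\lambda_{\sigma'}(g,e_x) = \delta_{x,1}$. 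The first is immediate from $\d\eta_0 = \alpha_\sigma$; the second follows from $\d\mu = \beta_\sigma$, where the sum over conjugates in $\Delta(g\times 1)$ again collapses by the conjugation invariance of $\mu$; the third uses the hypothesis $\lambda_\sigma \equiv \epsilon$ together with a direct check that $\lambda_{\d\eta}(g,e_x) = \delta_{x,1}$ for $\eta$ of the above product form.

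The remaining and genuinely hard step is a rigidity argument: a lazy $2$-cocycle $\sigma'$ on $\DG^*$ with the three literal-triviality conditions above must be identically the trivial cocycle, whence $\sigma = \d\eta$. My plan is to apply the $2$-cocycle identity $\sigma'(a_1,b_1)\,\sigma'(a_2 b_2, c) = \sigma'(b_1, c_1)\,\sigma'(a, b_2 c_2)$ to test triples such as $(g\times 1,\,1\times e_x,\,h\times 1)$ and $(1\times e_x,\,g\times 1,\,1\times e_y)$, invoking Lemma \ref{DGlazy} to discard the many terms forced to vanish. Each evaluation $\sigma'(g\times e_x, h\times e_y)$ then gets reduced to a product of values drawn from the already-trivial restrictions, forcing it to its trivial value. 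The main obstacle is precisely this step, because the twisted coproduct $\Delta(g\times 1) = \sum_t (g\times e_t)\otimes(t^{-1}gt\times 1)$ introduces conjugation terms that are controlled only by laziness.

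Finally, to see why the conclusion does not lift to $\H^2_L(\DG^*)$: the component $\mu\circ p$ is automatically lazy by the conjugation invariance of $\mu$, but $\eta_0\circ s$ is lazy only under the restrictive condition that $\eta_0$ vanishes on $G\setminus Z(G)$, which the hypothesis $\d\eta_0 = \alpha_\sigma \in \Z^2(k^G)$ does not guarantee. Hence $\eta$ is generically non-lazy, and triviality of $\sigma$ does not descend to $\H^2_L(\DG^*)$.
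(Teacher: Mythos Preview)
Your proposal has a genuine gap: the ``rigidity'' step, which you yourself flag as the hard part, is only a plan, not a proof. You want to show that a lazy $\sigma'$ with $\alpha_{\sigma'},\beta_{\sigma'},\lambda_{\sigma'}$ all literally trivial must be identically trivial, but the cocycle identity on test triples like $(g\times 1,\,1\times e_x,\,h\times 1)$ does not obviously reduce $\sigma'(g\times e_x,h\times e_y)$ to known values---the twisted coproduct spreads $g\times 1$ over all $(g\times e_t)\otimes(g^t\times 1)$, and you are left with mixed values such as $\sigma'(g\times 1,1\times e_x)$ that none of your three trivialities control directly. There is also a subtler problem: your $\sigma'=\sigma*(\d\eta)^{-1}$ need not be lazy, because $\d\eta$ (with your formula $\mu(g)\mu(h)\mu^{-1}(g^xh^y)\alpha_\sigma(e_x,e_y)$) does not in general vanish when $gh\neq g^xh^y$, so you cannot legitimately invoke Lemma~\ref{DGlazy} for $\sigma'$. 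Finally, the hypothesis only gives $\beta_\sigma$ trivial in $\H^2(G,k^\times)$, not in $\H^2_{inv}(G,k^\times)$; a conjugation-invariant $\mu$ with $\d\mu=\beta_\sigma$ can in fact be produced by averaging over conjugates and taking $|G|$-th roots, but you should say so.

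The paper avoids all of this by not attempting rigidity at all. The key idea you are missing is to incorporate $\sigma$ itself into the trivializing cochain: one sets $\mu(g\times e_x):=\sigma^{-1}(g\times 1,\,1\times e_x)$ and then checks by repeated use of the cocycle identity (splitting $g\times e_x=(g\times 1)(1\times e_x)$ and $h\times e_y$ similarly) and laziness that $\sigma=\d\bigl(\mu*(\eta\otimes\nu)\bigr)$, where $\eta,\nu$ trivialize $\alpha_\sigma,\beta_\sigma$ as in your setup. The hypothesis $\lambda_\sigma=\epsilon$ enters precisely to collapse the cross-terms $\sigma(1\times e_\bullet,h\times 1)\,\sigma^{-1}(h\times 1,1\times e_\bullet)$ that appear in this expansion. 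This direct construction handles in one stroke exactly the mixed values that obstruct your rigidity argument.
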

\begin{proof}
Let $\beta_{\sigma}= \d\nu$ and $\alpha_{\sigma}= \d \eta$. These to do not induce lazy coboundaries on $\DG^*$. But apply the $2$-cocycle condition several times on $\sigma$ by separating the $kG$ and $k^G$ parts:    
\begin{align} 
\sigma&(g \times e_x, h \times e_y) = \sigma((g \times 1)(1 \times e_x), h \times e_y) \nonumber \\
&= \sum_{\myover{x_1x_2x_3=x}{y_1y_2=y}} \sigma^{-1}(g \times 1 ,1 \times e_{x_1})\sigma(1 \times e_{x_2}, (h \times 1) (1 \times e_{y_1}))\sigma(g \times 1, (h \times 1 )(1 \times e_{x_3}e_{y_2 })) \nonumber \\
&= \sum_{\myover{x_1x_2x_3x_4x_5=x}{y_1y_2y_3y_4y_5=y}} \sigma^{-1}(g \times 1, 1 \times e_{x_1} )\sigma^{-1}(1 \times e_{y_1}, h \times 1) \sigma(1 \times e_{x_2}, 1 \times e_{y_2}) \nonumber \\ & \qquad \sigma(1 \times e_{x_3}e_{y_3}, h \times 1)\sigma^{-1}(h \times 1, 1 \times e_{x_4}e_{y_4})\sigma(g \times 1, h \times 1)\sigma(gh \times 1, 1 \times e_{x_5}e_{y_5}) \nonumber \\
&=\sum_{\myover{x_1x_2=x}{y_1y_2=y}} \sigma^{-1}(g \times 1, 1 \times e_{x_1} )\sigma^{-1}(1 \times e_{y_1}, h \times 1)\d\nu(g,h)d\eta(e_{x_2},e_{y_2})\sigma(gh \times 1, 1 \times e_{x_2}e_{y_2}) \label{con2}
\end{align}
Now let $\mu(g \times e_x) := \sigma^{-1}(g \times 1, 1 \times e_x)$ and check that together with $\eta$ this gives us the desired almost lazy coboundary:   
\begin{align}
&\d(\mu*(\eta \otimes \nu))(g \times e_x, h \times e_y) \nonumber \\ &= \sum_{\myover{x_1x_2 =x}{y_1y_2=y}}\mu*(\eta \otimes \nu)(g \times e_{x_1})\mu*(\eta \otimes \nu)(h \times e_{y_1})\mu*(\eta \otimes \nu)(g^{x_1}h^{y_1} \times e_{x_2}e_{y_2}) \nonumber \\ 
&= \sum_{\myover{x_1x_2x_3x_4 =x}{y_1y_2y_3y_4=y}} \sigma^{-1}(g \times 1, 1 \times e_{x_1})\nu(g)\eta(e_{x_2})\sigma^{-1}(h \times 1, 1 \times e_{y_1})\nu(h)\eta(e_{y_2}) \nonumber \\ & \qquad \sigma(g^{x_1x_2}h^{y_1y_2} \times 1, 1 \times e_{x_3}e_{y_3})\nu(gh)\eta(e_{x_4}e_{y_4}) \nonumber \\
&= \sum_{\myover{x_1x_2t=x}{y_1y_2t=y}} \sigma^{-1}(g \times 1, 1 \times e_{x_1})\sigma^{-1}(h \times 1, 1 \times e_{y_1})\d\nu(g,h)\d\eta(e_{x_2},e_{y_2})\sigma(g^{xt^{-1}}h^{yt^{-1}} \times 1, 1 \times e_t) \nonumber \\ 
&= \sum_{\myover{x_1x_2t=x}{y_1y_2t=y}} \sigma^{-1}(g \times 1, 1 \times e_{x_1})\sigma^{-1}(h \times 1, 1 \times e_{y_1})\d\nu(g,h)d\eta(e_{x_2},e_{y_2})\sigma(g^{x}h^{y} \times 1, 1 \times e_t) \label{con1}
\end{align}  
Here we used the lazy property of $\sigma$ as in Lemma \ref{DGlazy} but also in order to commute with the $\eta$ factos as follows: $\sigma(a_1,b_1)\eta(a_2b_2) = \eta(a_1b_1)\sigma(a_2b_2)$. Comparing ($\ref{con1}$) with ($\ref{con2}$) proves the statement. 
\end{proof}

\noindent
At the end we want to present an interesting special case of the above decomposition.    
Let us call a $2$-cocycle $\sigma \in \Z_L^2(\DG^*)$ symmetric if for all $g,t,h,s \in G$: 

\begin{align*} 
\sigma(g\times e_t,h^g\times e_s)
=\sigma(h\times e_{gs(g^{-1})^t},g\times e_t) 
\end{align*}

This is motivated by the study of lazy \emph{braided} monoidal autoequivalences of $\DG\md\mod$, where such a $2$-cocycle defines an autoequivalence of $\DG\md\mod$ that is identity on objects and morphisms but has a non-trivial monoidal structure determined by $\sigma$.  \\  

\begin{lemma}\label{lm_symmetric}
A symmetric lazy $2$-cocycle on $\DG^*$ is cohomologically equivalent in $\H^2(DG^*)$ to a lazy $2$-cocycle in the image of map $\Z^2_{inv}(G) \to \Z_L^2(\DG^*)$.
\end{lemma}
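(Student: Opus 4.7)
The plan is to apply Lemma \ref{lm_inKernelExact}, which reduces the question to checking that after subtracting a suitable element in the image of $\Z^2_{inv}(G,k^\times)\to \Z^2_L(\DG^*)$, the three restrictions $\beta_{-},\alpha_{-},\lambda_{-}$ all become trivial on the remainder. Concretely, given a symmetric lazy $2$-cocycle $\sigma$, set $\beta:=\beta_\sigma\in\Z^2_{inv}(G,k^\times)$ and form the lazy $2$-cocycle $\sigma':=\sigma\cdot\sigma_\beta^{-1}$. Since the section $\beta\mapsto\sigma_\beta$ satisfies $\beta_{\sigma_\beta}=\beta$, one has $\beta_{\sigma'}=1$ immediately, which gives the first condition of Lemma \ref{lm_inKernelExact}.

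For the restriction to $k^G$: the formula $\sigma_\beta(g\times e_x,h\times e_y)=\beta(g,h)\epsilon(e_x)\epsilon(e_y)$ specializes to $\sigma_\beta(1\times e_x,1\times e_y)=\epsilon(e_x)\epsilon(e_y)$, hence $\alpha_{\sigma'}=\alpha_\sigma$. Now substituting $g=h=1$ into the symmetric condition $\sigma(g\times e_t, h^g\times e_s)=\sigma(h\times e_{gs(g^{-1})^t},g\times e_t)$ yields $\sigma(1\times e_t,1\times e_s)=\sigma(1\times e_s,1\times e_t)$, i.e.\ $\alpha_\sigma$ is a \emph{symmetric} lazy $2$-cocycle on $k^G$. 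By the corollary at the end of Section \ref{seckupg}, $\alpha_\sigma$ is cohomologically trivial, which fulfils the second condition of Lemma \ref{lm_inKernelExact}.

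For the bialgebra pairing one checks, using the defining formula of $\sigma_\beta$, that $\sigma_\beta(1\times e_y,g\times e_t)=\delta_{y,1}\delta_{t,1}$ and $\sigma_\beta(t^{-1}gt\times 1,1\times e_z)=\delta_{z,1}$, whence $\lambda_{\sigma_\beta}=\epsilon$ and therefore $\lambda_{\sigma'}=\lambda_\sigma$. It remains to show $\lambda_\sigma(g,e_x)=\delta_{x,1}$ for every symmetric $\sigma$. The strategy is to start from
\[
\lambda_\sigma(g,e_x)=\sum_{x_1x_2=x,\,t}\sigma^{-1}(1\times e_{x_1},\,g\times e_t)\,\sigma(t^{-1}gt\times 1,\,1\times e_{x_2}),
\]
apply the symmetric identity (and its consequence for $\sigma^{-1}$ obtained from $\sigma*\sigma^{-1}=\epsilon$) to both factors so as to bring the two $g$-slots into matching position, and then sum over $t$ using laziness (Lemma \ref{DGlazy}) together with $\sigma*\sigma^{-1}=\epsilon$ to collapse the sum to $\delta_{x,1}$. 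With all three conditions verified, Lemma \ref{lm_inKernelExact} yields that $\sigma'$ is cohomologically trivial in $\H^2(\DG^*)$, so $\sigma\equiv\sigma_\beta$ in $\H^2(\DG^*)$ with $\sigma_\beta$ in the image of $\Z^2_{inv}(G,k^\times)\to\Z^2_L(\DG^*)$, as claimed.

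The main obstacle is the third step: extracting the identity $\lambda_\sigma=\epsilon$ from the symmetric condition requires a careful manipulation of the cocycle and laziness identities inside the double sum, as opposed to the almost automatic arguments for $\beta$ and $\alpha$. Should an exact equality on the nose not be achievable, the residue can be absorbed into an almost-lazy coboundary, which is admissible because the target equivalence is only asserted in $\H^2(\DG^*)$ (not in $\H^2_L(\DG^*)$), and this is precisely the flexibility that Lemma \ref{lm_inKernelExact} provides.
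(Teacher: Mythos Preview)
Your proposal follows exactly the same route as the paper: subtract $\sigma_{\beta_\sigma}$, observe that the symmetry condition forces $\alpha_\sigma$ to be symmetric (hence cohomologically trivial by the corollary in Section~\ref{seckupg}) and $\lambda_\sigma=\epsilon$, and then invoke Lemma~\ref{lm_inKernelExact}. The paper's proof simply asserts ``from the symmetry condition follows \ldots\ that $\lambda_\sigma=1$'' without further argument, so your sketch of that step is already at least as detailed as the original; your caveat about a possible almost-lazy residue is unnecessary, since the paper claims (and you should argue) the exact identity $\lambda_\sigma=\epsilon$ directly from the symmetry relation.
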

\begin{proof}
From the symmetry condition follows that $\beta_{\sigma}(g,h^g)=\beta_{\sigma}(h,g)$, that $\alpha_{\sigma}$ is symmetric and that $\lambda_{\sigma}=1$. Multiplying $\sigma$ from the left by $\sigma_{\beta_{\sigma}}^{-1}$ gives a $2$-cocycle $\sigma'$ that fulfills all the properties in Lemma \ref{lm_inKernelExact}, in particular $\beta_{\sigma'}$ cohomologically trivial in $\H^2(G,k^\times)$, hence $\sigma'$ is cohomologically trivial in $\H^2(\DG^*)$.  
\end{proof}

\noindent{\sc Acknowledgments}: 
We are grateful to C. Schweigert for many helpful
discussions. We also want to thank M. Keilberg for comments and suggestions. The authors are partially supported by the DFG Priority Program SPP 1388 ``Representation Theory'' and the Research Training Group 1670 ``Mathematics Inspired by String Theory and QFT''. S.L. is currently on a research stay supported by DAAD PRIME, funded by BMBF and EU Marie Curie Action.

\end{document}